\renewcommand{\epsilon}{\varepsilon}
\renewcommand{\P}{\mathbb{P}}
\renewcommand{\leq}{\leqslant}
\renewcommand{\geq}{\geqslant}
\newcommand{\N}{\mathbb{N}}
\newcommand{\Z}{\mathbb{Z}}
\newcommand{\Q}{\mathbb{Q}}
\newcommand{\R}{\mathbb{R}}
\newcommand{\C}{\mathbb{C}}
\newcommand{\T}{\mathbb{T}}
\newcommand{\av}{\declareslashed{}{-}{-0.165}{0}{\int_{S^1}}\slashed{\int_{S^1}}}
\newtheoremstyle{fancy}{}{}{\itshape}{}{\textbf\bgroup}{.\egroup}{ }{}
\newtheoremstyle{fancy2}{}{}{\rm}{}{\textbf\bgroup}{.\egroup}{ }{}
\theoremstyle{fancy}
\newtheorem{theorem}{Theorem}[section]
\newtheorem{lemma}[theorem]{Lemma}
\newtheorem{prop}[theorem]{Proposition}
\newcounter{mtheorem}
\newtheorem{mtheorem}[mtheorem]{Theorem}
\theoremstyle{fancy2}
\newtheorem{definition}[theorem]{Definition}
\newtheorem{example}[theorem]{Example}
\newtheorem{remark}[theorem]{Remark}
\numberwithin{equation}{section}
\begin{document}

\title{Classification of\\asymptotically conical Calabi-Yau manifolds}

\date{\today}

\author{Ronan J.~Conlon}
\address{Department of Mathematical Sciences, UT Dallas, Richardson, TX 75080, USA}
\email{ronan.conlon@utdallas.edu}

\author{Hans-Joachim Hein}
\address{Mathematisches Institut, WWU M\"unster, 48149 M\"unster, Germany\newline\hspace*{9pt}
Department of Mathematics, Fordham University, Bronx, NY 10458, USA}
\email{hhein@uni-muenster.de}

\begin{abstract}
A Riemannian cone $(C, g_C)$ is by definition a warped product $C = \R^+ \times L$ with metric $g_C = dr^2 \oplus r^2 g_L$, where $(L,g_L)$ is a compact Riemannian manifold without boundary. We say that $C$ is a Calabi-Yau cone if $g_C$ is a Ricci-flat K\"ahler metric and if $C$ admits a $g_C$-parallel holomorphic volume form; this is equivalent to the cross-section $(L,g_L)$ being a Sasaki-Einstein manifold. In this paper, we give a complete classification of all smooth complete Calabi-Yau manifolds asymptotic to some given Calabi-Yau cone at a polynomial rate at infinity. As a special case, this includes a proof of  Kronheimer's classification of ALE hyper-K\"ahler $4$-manifolds without twistor theory.
\end{abstract}

\maketitle

\markboth{Ronan J.~Conlon and Hans-Joachim Hein}{Classification of asymptotically conical Calabi-Yau manifolds}

\section{Introduction}

\subsection{Motivation and overview}\label{s:intro}

Recall that a complete noncompact $d$-dimensional Riemannian manifold $(M,g)$ with one end is called \emph{asymptotically locally Euclidean} (ALE) \emph{of order $\lambda < 0$} if there exist a compact set $K \subset M$, a finite subgroup $\Gamma$ of SO$(d)$ acting freely on the unit sphere $\mathbb{S}^{d-1}$, and a diffeomorphism $\Phi: (\R^d \setminus B_1(0))/\Gamma \to M\setminus K$ such that for all $j \in \N_0$,
\begin{equation}\label{ALEdefn}
|\nabla_{g_0}^j(\Phi^*g - g_0)|_{g_0} = O(r^{\lambda-j}),
\end{equation}
where $g_0$ denotes the standard Euclidean metric on $\R^d$ or on the flat cone $\R^d/\Gamma$.

Kronheimer's classification of  ALE hyper-K\"ahler $4$-manifolds \cite{Kronheimer, Kronheimer2} is a foundational result of $4$-dimensional Riemannian geometry. Its significance can be explained as follows. On one hand, the classification is completely explicit in terms of the geometry and topology of the so-called \emph{Kleinian surface singularities} $\C^2/\Gamma$ (here $\Gamma$ is a finite subgroup of SL$(2,\C)$ acting freely on $\C^2\setminus\{0\}$) and of their resolutions and deformations \cite{durfee, lamotke}. On the other hand, {a complete Ricci-flat Riemannian $4$-manifold of maximal volume growth is necessarily ALE} \cite{ALE, CheegerSurvey, ChNa, TianSurvey}, so if it is K\"ahler, it has to be a finite quotient of a hyper-K\"ahler ALE space \cite{suvaina, wright}. As a consequence, Kronheimer's results give us a handle on the possible metric degenerations of K\"ahler-Einstein surfaces with fixed volume and bounded diameter. See the surveys \cite{AndersonSurvey, ViaclovskySurvey} for some of the many developments of this idea.

Our results in this paper may be viewed as a generalization of Kronheimer's work to dimensions $d > 4$. One main difficulty of any generalization of this kind is that the strictly $4$-dimensional tools of twistor theory that Kronheimer used in \cite{Kronheimer,Kronheimer2} are no longer available. Let us briefly sketch the existing higher-dimensional theory to show how our results fit into a general picture.

Every $d$-dimensional complete Ricci-flat Riemannian manifold  $(M, g)$ of maximal volume growth has \emph{tangent cones at infinity}: any sequence $t_i \to 0$ has a subsequence $t_{i_j} \to 0$ such that the sequence $(M, t_{i_j}^2 {\rm dist}_g, p)$ of pointed Riemannian manifolds (here $p \in M$ is an arbitrary basepoint) converges in the pointed Gromov-Hausdorff sense to the metric cone $C = C(L)$ over some complete geodesic metric space $L$ of diameter $\leq \pi$ \cite{ChBook, CheegerColding}. The \emph{link} $L$ is a smooth Riemannian manifold of dimension $d-1$ away from a closed rectifiable subset of Minkowski dimension $\leq d-5$ \cite{ChNa, JiangNaber}. The possible singularities of $L$ are a source of major difficulties if $d > 4$. There exist many interesting (mostly very recent) examples where $L$ is indeed singular \cite{biq-delc, BiqGaud, CDR, CR, JoyceQALE, LiYang, Szekel}.

It is widely expected that every sequence $t_i \to 0$ and subsequence $t_{i_j} \to 0$ as above leads to the same tangent cone $C = C(L)$. This is the well-known open problem of \emph{uniqueness of tangent cones}. Uniqueness is known if at least one tangent cone has a smooth link $L$ \cite{Cheeger, CM}. In the {K\"ahler} case, this smoothness assumption is unnecessary thanks to the breakthrough work of Donaldson-Sun \cite{DS2} if $M$ itself arises as a blow-up limit of a sequence of compact polarized K\"ahler-Einstein manifolds. Crucially, in this situation, \cite{DS2} establish a close algebraic relationship between $M$ and $C$. Liu \cite{LiuGang} has removed the assumption of $M$ being a blow-up limit from \cite{DS2} if at least one tangent cone of $M$ has a smooth link. The results of \cite{DS2, LiuGang} on the algebraic relationship between $M$ and $C$ amount to a more general but less precise version of some of our results, which we will now describe.\footnote{Our methods in this paper are independent of \cite{DS2, LiuGang}. A previous version of this paper dealing with the case where $L$ is a regular or quasi-regular Sasaki-Einstein manifold was posted to the arXiv in 2014. The current version includes the irregular case, relying crucially on the latest version of Li's work \cite{ChiLi}, which is also independent of \cite{DS2, LiuGang}.}

Consider a complete Ricci-flat K\"ahler manifold of maximal volume growth at least one of whose tangent cones has a smooth link, so that the tangent cone is unique by \cite{Cheeger,CM,DS2,LiuGang}. The known proofs of uniqueness yield a {convergence rate} of $M$ to $C$ no better than $O((\log r)^{\lambda})$ for some $\lambda< 0$ unless $C$ satisfies a certain integrability property \cite{Cheeger}. While this logarithmic rate is expected to be optimal in general, all known examples with a tangent cone with a smooth link actually converge at a \emph{polynomial rate}, exactly as in \eqref{ALEdefn}. In this paper, we completely characterize the polynomial rate case, thus completing our previous work in \cite{Conlon, Conlon3}. For example, the following uniqueness theorem will be a typical application of our methods (see Theorem \ref{uniqueness} in Section \ref{s:results}):

\begin{quote}
Let $M$ be a complete Ricci-flat K\"ahler manifold of real dimension $d = 2n \geq 4$ whose metric is polynomially asymptotic to the unique SO$(n+1)$-invariant Ricci-flat K\"ahler cone metric on $C = \{z_0^2 + \cdots + z_n^2 = 0\} \subset \C^{n+1}$ up to diffeomorphism. Then modulo scaling and diffeomorphism, either $M = T^*\mathbb{S}^n$ together with Stenzel's metric \cite{Stenzel},  or $n = 3$ and $M = \mathcal{O}_{\P^1}(-1)^{\oplus 2}$ together with Candelas-de la Ossa's metric \cite{delaossa}.
\end{quote}

\noindent In addition, we also prove an existence theorem (Theorem \ref{existence}) that exhausts all possible examples of asymptotically conical Calabi-Yau manifolds, including those asymptotic to \emph{irregular} cones.

\subsection{Preliminaries}\label{s:prelims}

\subsubsection{Riemannian, K\"ahler, and Calabi-Yau cones}\label{sss:cones}

\begin{definition}\label{cone}
Let $( L, g_L)$ be a compact connected Riemannian manifold. The \emph{Riemannian cone with link} $L$ is defined to be the manifold $C = \R^+ \times L$ with metric $g_C = dr^2 \oplus r^2g_L$. For simplicity we will usually write $g_0$ instead of $g_C$ if the cone $C$ is given.
\end{definition}

We now consider Riemannian cones whose metric is a K\"ahler metric. The links of such cones are called \emph{Sasaki manifolds}. The book \cite{book:Boyer} is an excellent general reference for Sasaki geometry.

\begin{definition}
A \emph{K{\"a}hler cone} is a Riemannian cone $(C,g_0)$ such that $g_0$ is K{\"a}hler, together with a choice of $g_0$-parallel complex structure $J_0$. This will in fact often be unique up to sign. We then have a K\"ahler form $\omega_0(X,Y) = g_0(J_{0}X,Y)$, and $\omega_0 =\frac{i}{2}\partial\overline{\partial} r^2$ with respect to $J_0$.
\end{definition}

The \emph{Reeb vector field} $\xi=J_{0}(r\partial_r)$ of a K\"ahler cone is a holomorphic Killing field tangent to the link. The closure of the $1$-parameter subgroup of ${\rm Isom}(C,g_0)$ generated by $\xi$ is a compact torus $\T$ of holomorphic isometries called the \emph{Reeb torus} of the cone. We say that the cone is \emph{regular} if $\T = S^1$ acting freely, \emph{quasi-regular} if $\T = S^1$ not acting freely, and \emph{irregular} if $\dim \T > 1$.

A \emph{transverse automorphism}  of a K\"ahler cone is an automorphism of the complex manifold $(C,J_{0})$ that preserves the Reeb vector field $\xi$, or equivalently the scaling vector field $r\partial_r = -J_0\xi$. We denote the group of all transverse automorphisms by $\operatorname{Aut}^{T}(C)$. This contains the scaling action of $\mathbb{R}^+$.

The completion $C \cup \{o\}$ carries a unique structure of a normal affine algebraic variety extending the given complex manifold structure on $C$. Moreover, we can take the action of $\T$ to be linear on the ambient affine space in such a way that all weights of the action of $\xi$ lie in $\R^+$. See \cite[\S 3.1]{vanC4} and \cite[\S 2.1]{Tristan}, and see \cite[\S 4.2.3]{Conlon3} for an example. The key point is that every {periodic} vector field $\xi' \in {\rm Lie}(\T)$ with $\langle \xi, \xi' \rangle > 0$ exhibits $C$ as the total space of a negative holomorphic $\C^*$-orbibundle over the compact complex orbifold $C/ \langle e^{t\xi'} \rangle$. The desired algebraic structure on $C \cup \{o\}$ is given by the Remmert reduction \cite[p.336, \S2.1]{Grau:62} (i.e., the contraction of the zero section) of the total space of the associated negative holomorphic line orbibundle. This structure is independent of $\xi'$.

\begin{definition}\label{d:cycone}
A quadruple $(C,g_0,J_0,\Omega_0)$ is a \emph{Calabi-Yau cone} if $(C,g_0, J_0)$ is a Ricci-flat K\"ahler cone of complex dimension $n$, the canonical bundle $K_{C}$ of the complex manifold $(C,J_0)$ is trivial, and $\Omega_{0}$ is a $g_0$-parallel holomorphic section of $K_{C}$ with $\omega_0^n = i^{n^2}\Omega_0 \wedge \overline{\Omega}_0$. We note here that $J_0$ can be recovered from $\Omega_0$ using the fact that $\Lambda^{1,0}_{J_0}C = {\rm ker}[\Lambda^1_{\C}C \ni \alpha \mapsto \Omega_0 \wedge \alpha \in \Lambda^{n+1}_{\C}C]$.
\end{definition}

If we fix the complex manifold $(C,J_0)$ and the scaling vector field $r\partial_r$, then the Calabi-Yau cone structure $(g_0,\Omega_0)$ is unique up to scaling and up to the action of ${\rm Aut}^T_0(C)$ \cite{Sparks2, nitta}.

The links of Calabi-Yau cones are called \emph{Sasaki-Einstein manifolds}. See \cite[\S 11.4]{book:Boyer} for a nice set of regular, quasi-regular, and irregular examples. The recent paper \cite{GaborTristan2} characterizes Sasaki-Einstein manifolds completely in terms of $K$-stability, providing many new examples.

\subsubsection{Asymptotically conical Calabi-Yau manifolds}

\begin{definition}\label{ACCY}
Let $(C,g_0,J_0,\Omega_0)$ be a Calabi-Yau cone as above. Let $(M,g,J,\Omega)$ be a Ricci-flat K\"ahler manifold with a parallel holomorphic volume form such that $\omega^n = i^{n^2}\Omega \wedge \overline\Omega$. We call $M$ an \emph{asymptotically conical Calabi-Yau manifold with asymptotic cone $C$} if there exist a compact subset $K\subset M$ and a diffeomorphism $\Phi: \{r > 1\} \to M\setminus K$ such that for some $\lambda_1, \lambda_2 < 0$ and all $j \in \N_0$,
\begin{align}
|\nabla_{g_0}^j(\Phi^{*}g-g_{0})|_{g_{0}} =O(r^{\lambda_1-j}),\label{e:asympt:g}\\
|\nabla_{g_0}^j(\Phi^{*}J-J_{0})|_{g_{0}} =O(r^{\lambda_2-j}),\label{e:asympt:J}\\
|\nabla_{g_0}^j(\Phi^{*}\Omega-\Omega_{0})|_{g_{0}} = O(r^{\lambda_2-j}).\label{e:asympt:Om}
\end{align}
We abbreviate the words ``asymptotically conical'' by AC.
\end{definition}

\begin{remark}
(1) $\Phi$ is not required (and typically cannot be chosen) to be $(J_0, J)$-holomorphic.

(2) Condition (\ref{e:asympt:g}) already implies that (\ref{e:asympt:J}) and (\ref{e:asympt:Om}) hold, with $\lambda_2 \leqslant \lambda_1$, for {some} $g_0$-parallel tensors $J_0, \Omega_0$, simply because $J,\Omega$ are $g$-parallel. However, it is often possible to take $\lambda_2 < \lambda_1$.

(3) \cite[Lemma 2.14]{Conlon} tells us that the optimal exponents in (\ref{e:asympt:J}) and (\ref{e:asympt:Om}) are a priori equal.
\end{remark}

In this paper, we work with AC Calabi-Yau manifolds for simplicity even though it may be more natural to consider the larger class of all {AC Ricci-flat K\"ahler manifolds}. The following proposition, which was inspired by \cite[\S4]{suvaina}, clarifies the difference between these two classes.

\begin{prop}\label{p:acrfk}
Let $(M,g,J)$ be a Ricci-flat K\"ahler manifold asymptotic to a Ricci-flat K\"ahler cone  $(C,g_{0},J_0)$ in the sense that \eqref{e:asympt:g} and \eqref{e:asympt:J} hold. Then $\pi_1(M)$ is finite and the universal cover $(\tilde{M},\tilde{g},\tilde{J})$ is an {\rm AC} Calabi-Yau manifold. Moreover, $(M,g,J)$ is itself an {\rm AC} Calabi-Yau manifold if and only if $(C,g_0,J_0)$ is a Calabi-Yau cone, i.e., admits a compatible Calabi-Yau structure $\Omega_0$.
\end{prop}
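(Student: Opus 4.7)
The proof has three parts, with the finiteness of $\pi_1(M)$ being the crux.

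\emph{Step 1: $\pi_1(M)$ is finite.} Since $(C,g_0)$ is Ricci-flat K\"ahler, the link $(L,g_L)$ is Einstein with constant $2(n-1)>0$, so Myers' theorem gives $|\pi_1(L)| < \infty$. Denote by $\iota_*:\pi_1(L) \to \pi_1(M)$ the map induced by $L \hookrightarrow M$ at large radius. I claim $\iota_*$ is surjective, which together with $|\pi_1(L)|<\infty$ yields finiteness of $\pi_1(M)$. Suppose not: then the preimage of the end in the universal cover $\tilde M$ is disconnected, each component is a finite (hence compact) cover of $L$, and so $\tilde M$ has at least two ends. By Cheeger-Gromoll, $(\tilde M, \tilde g)$ splits isometrically as $\R \times N$; the K\"ahler parallel field $\tilde J \partial_t$ together with $\partial_t$ upgrades this to a K\"ahler splitting $\tilde M = \C \times N'$. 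The asymptotic cone $\tilde C$ of $\tilde M$ then has the form $\C \times C(L')$, which forces its smooth link to be a join $\mathbb{S}^3 * L'$; a join of a smooth manifold with $\mathbb{S}^3$ is itself a manifold only when $L'$ is a sphere, so $\tilde C$ must be flat Euclidean and $C \cong \R^{2n}/\Gamma_0$. This residual flat case is treated by Colding's volume rigidity applied to $\tilde M$: $\tilde M = \C^n$, and then $\pi_1(M) \subset \mathrm{Iso}(\R^{2n})$ is forced to be finite by the non-compactness of $M$ combined with maximal volume growth.

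\emph{Step 2: $\tilde M$ is AC Calabi-Yau.} By Step 1, $\tilde M \to M$ is a finite Riemannian cover, and by the surjectivity of $\iota_*$ its end is a single cone-like region $\R^+ \times L_K$, where $K := \ker \iota_*$ and $L_K$ is the corresponding cover of $L$. Thus $\tilde M$ is AC with asymptotic cone $\tilde C := \R^+ \times L_K$, a Ricci-flat K\"ahler cone. Since $\tilde M$ is simply connected and $K_{\tilde M}$ is a flat Hermitian line bundle, $K_{\tilde M}$ is holomorphically trivial, and a flat unitary trivialization yields a $\tilde g$-parallel holomorphic volume form $\tilde \Omega$ (with $\tilde\omega^n = i^{n^2}\tilde\Omega \wedge \overline{\tilde\Omega}$ after rescaling). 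To exhibit $\tilde C$ as a Calabi-Yau cone, I would show that the holonomy character $\chi_0:\pi_1(L)\to U(1)$ of $K_C$ vanishes on $K$: every $\gamma \in K$ bounds a disk in $M$, so the $K_M$-holonomy $\chi(\iota_*\gamma)$ equals $1$; and since $\pi_1(L)$ is finite (hence $\mathrm{Hom}(\pi_1(L),U(1))$ is discrete), the asymptotic comparison of $\Phi^* K_M$ with $K_C$ forces $\chi \circ \iota_* = \chi_0$ as characters on $\pi_1(L)$. Hence $\chi_0(\gamma) = 1$, $K_{\tilde C}$ has a parallel section $\tilde \Omega_0$, and matching norms and a phase at infinity gives the AC estimate \eqref{e:asympt:Om}.

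\emph{Step 3: $M$ is AC Calabi-Yau iff $C$ is Calabi-Yau, with Step 1 as main obstacle.} The ``only if'' direction is built into Definition \ref{ACCY}. For the converse, if $C$ is Calabi-Yau then $\chi_0$ is trivial; by the identity $\chi \circ \iota_* = \chi_0$ from Step 2 and the surjectivity of $\iota_*$, $\chi$ is trivial on $\pi_1(M)$, so $K_M$ is holomorphically trivial with a parallel section $\Omega$ whose asymptotics match $\Omega_0$ as before. The main difficulty lies in Step 1, where the interplay between the Cheeger-Gromoll splitting, the K\"ahler structure, and the smoothness of the cone link requires a careful case analysis (and where the flat case must be dispatched separately via volume rigidity); Steps 2 and 3 then amount to bookkeeping about flat line bundle characters, which is manageable precisely because $\pi_1(L)$ is finite.
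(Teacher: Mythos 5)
Your overall architecture — pass to the universal cover, play the Cheeger--Gromoll splitting off against the conical structure at infinity, and control the flat canonical bundle through its holonomy character — is essentially the paper's. The main structural difference is that the paper simply \emph{cites} the finiteness of $\pi_1(M)$ for complete manifolds with $\mathrm{Ric}\geq 0$ and maximal volume growth \cite{AndFund,LiFund}, and uses the splitting-versus-volume-growth contradiction only to show that $\tilde{M}\setminus\tilde{K}$ is connected, whereas you use the same contradiction to prove surjectivity of $\iota_*:\pi_1(L)\to\pi_1(M)$ and then invoke Myers on the link; that is a legitimate self-contained route in this conical setting. Your Steps 2--3 are the paper's argument in different clothing: the identity $\chi\circ\iota_*=\chi_0$ and its triviality is what the paper packages as the homomorphism $h:\pi_1(M)\to U(1)$ with $\gamma^*\tilde\Omega=h(\gamma)\tilde\Omega$ asymptotic to $\tilde\Omega$, hence $h\equiv 1$.

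The genuine problem is the execution of Step 1. Once $\tilde{M}$ has at least two ends, each with a neighborhood of the form $\R^+\times(\text{compact})$, and hence splits as $\R\times N$, you are already done: $\R\times N$ with $N$ connected and noncompact is one-ended, so $N$ must be compact, so $\mathrm{vol}\,B_r\leq 2r\,\mathrm{vol}(N)$ grows linearly, contradicting the Euclidean volume growth $\sim r^{2n}$ forced by any single conical end. Your detour through the asymptotic cone is both unnecessary and incorrect as written: the link of $\C\times C(L')$ is the join $\mathbb{S}^1*L'$, not $\mathbb{S}^3*L'$; the assertion that such a join is a manifold only when $L'$ is a sphere is unsubstantiated and delicate (compare the double suspension phenomenon); and the ``residual flat case'' you then dispatch via Colding's rigidity does not actually arise once the elementary volume-growth contradiction is run correctly. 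Separately, in Step 2 the phrase ``matching norms and a phase at infinity'' conceals the real content of \eqref{e:asympt:Om}: the cleaner route, which the paper takes, is to note that the $\tilde{g}$-parallel form $\tilde\Omega$ automatically has a $g_0$-parallel limit $\tilde\Omega_0$ on $\tilde{C}$ (see item (2) of the remark following Definition \ref{ACCY}), rather than reconstructing $\tilde\Omega_0$ from the character $\chi_0$ and then matching it to $\tilde\Omega$.
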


\begin{proof}
Let $L$ denote the link of $C$. Fix a  compact set $K\subset M$ such that $M\setminus K$ is diffeomorphic to $(0,\infty)\times L$. Let $p:\tilde{M}\to M$ be the universal cover of $M$ and consider the preimage $\tilde{K}=p^{-1}(K)$. Since $\pi_1(M)$ is finite \cite{AndFund, LiFund}, $\tilde{K}$ will be compact and $\tilde{M}\setminus\tilde{K}$ will consist of finitely many connected components. If the number of components was two or greater, then $\tilde{M}$ would split off a line by the Cheeger-Gromoll splitting theorem applied to $\tilde{g}$, contradicting the maximal volume growth of $\tilde{g}$. So there is only one connected component, and this then takes the form $(0,\infty) \times \tilde{L}$ for some connected

\newpage

\noindent finite covering space $\tilde{L}\to L$. This is a normal covering with deck transformation group $\pi_1(M)$. In particular, $\tilde{M}$ is an AC Ricci-flat K\"ahler manifold asymptotic to a Ricci-flat K\"ahler cone $\tilde{C}$ with link $\tilde{L}$ via a $\pi_1(M)$-equivariant diffeomorphism $\tilde\Phi$. Since $\pi_1(\tilde{M}) = \{0\}$, we may trivialize $K_{\tilde{M}}$ by a $\tilde{g}$-flat section $\tilde\Omega$, which then clearly has a limit $\tilde\Omega_0$ on $\tilde{C}$, proving that $\tilde{M}$ is AC Calabi-Yau.

It remains to prove that if $(C,g_0,J_0)$ has a compatible Calabi-Yau structure $\Omega_0$, then $(M,g,J)$ has a compatible Calabi-Yau structure $\Omega$ asymptotic to $\Omega_0$ via $\Phi$. For this we pass to the universal cover $\tilde{M} \to M$ as above, noting that $\tilde{C}$ now comes equipped with a Calabi-Yau structure $\tilde\Omega_0$ which is $\pi_1(M)$-invariant. Because $K_{\tilde{M}}$ is flat without holonomy and (crucially) because $\tilde{M}$ is one-ended, we can easily construct a Calabi-Yau structure $\tilde\Omega$ on $\tilde{M}$ asymptotic to $\tilde\Omega_0$ via $\tilde\Phi$. As $\tilde\Phi$ is $\pi_1(M)$-equivariant and $\tilde\Omega_0$ is $\pi_1(M)$-invariant, we obtain that $\gamma^*\tilde\Omega$ is asymptotic to $\tilde\Omega$ for all $\gamma \in \pi_1(M)$. But $\gamma^*\tilde\Omega = h(\gamma)\tilde\Omega$ for some homomorphism $h: \pi_1(M) \to {\mathrm{U}}(1)$, so $h$ is actually trivial.
\end{proof}

\subsubsection{Deformations of negative $\xi$-weight}\label{sss:defns}

Let $C$ be a K\"ahler cone. We will not distinguish between $C$ and the associated normal affine variety $C \cup \{o\}$. Let $\T$ be the Reeb torus of $C$, generated by the flow of the Reeb vector field $\xi = J(r\partial_r) \in {\rm Lie}(\T)$. Our main results (Theorems \ref{existence}--\ref{classification} in Section \ref{s:results}) characterize AC Calabi-Yau manifolds in terms of \emph{deformations of negative $\xi$-weight} of their cones. Unfortunately our definition of these deformations is rather technical; however, it turns out to be easy to check in practice. We suspect that there exists a more natural definition.

\begin{definition}\label{negativeweight}
An affine variety $V$ is a \emph{deformation of negative $\xi$-weight of $C$} if and only if there exists a sequence $\xi_i$ of elements of ${\rm Lie}(\T)$ and a sequence $c_i$ of positive real numbers such that

(1) $\xi_i \to \xi$ as $i \to \infty$;

(2) the vector field $-J(c_i \xi_i)$ generates an effective algebraic $\C^*$-action on $C$;

(3) there exists a $\C^*$-equivariant deformation of $V$ to $C$, i.e., a triple $(W_i,p_i,\sigma_i)$, where
\begin{itemize}
\item $W_i$ is an irreducible affine variety,
\item $p_i: W_i \to \C$ is a regular function with $p_i^{-1}(0) \cong C$ and $p_i^{-1}(t) \cong V$ for $t\neq0$, and
\item $\sigma_i:\C^{*}\times W_i\to W_i$ is an effective algebraic $\C^{*}$-action on $W_i$ such that
\begin{itemize}
\item[$\bullet$] $p_i(\sigma_i(z,x))=z^{\mu_i} p_i(x)$ for some $\mu_i \in \N$ and all $z \in \C^*$, $x \in W_i$, and
\item[$\bullet$] $\sigma_i$ restricts to the $\C^*$-action on $p_i^{-1}(0) \cong C$ generated by $-J(c_i\xi_i)$;
\end{itemize}
\end{itemize}

(4) $\lim_{z\to 0}\sigma_i(z,x)=o$ for every $x\in W_i$, where $o\in C$ is the apex of the cone; and

(5) the sequence $\lambda_{i}:= -(k_i \mu_i)/c_i$ is uniformly bounded away from zero, where $k_i \in \N \cup \{\infty\}$ is the \emph{vanishing order} of the deformation $(W_i, p_i)$, i.e., the supremum of all $k \in \N$ such that $(W_i, p_i)$ becomes isomorphic to the trivial deformation of $C$ after base change to ${\rm Spec}\;\C[t]/(t^k)$.

If $V$ satisfies this condition, then we define the $\xi$-\emph{weight} of $V$ to be the infimum over all possible sequences $\xi_i$ and $(W_i, p_i,\sigma_i)$ as above of $\limsup_{i\to\infty} \lambda_i$. This is a negative real number $\lambda$.
\end{definition}

\subsection{Main results}\label{s:results}

\begin{mtheorem}[Existence]\label{existence}
Let $C$ be a Calabi-Yau cone with $\dim C \geq 2$, with Ricci-flat K\"ahler form $\omega_0$ and Reeb vector field $\xi$. View $C$ as a normal affine variety in the only possible way.

{\rm (1)} Let the affine variety $V$ be a deformation of negative $\xi$-weight of $C$. Then $V$ is Gorenstein with at worst finitely many singularities and with trivial canonical bundle. Moreover, complements of suitable compact subsets of $C$ and of $V$ are diffeomorphic to each other.

{\rm (2)} Let $\pi: M \to V$ be a holomorphic crepant resolution such that the complex manifold $M$ admits a K\"ahler form. Then $M$ is a quasi-projective algebraic manifold.

{\rm (3)} There exists a diffeomorphism $\Phi$ between complements of compact subsets of $C$ and $V$ such that for any resolution $\pi: M \to V$ as in item {\rm (2)}, for any class $\mathfrak{k} \in H^2(M,\R)$ such that $\langle \mathfrak{k}^d, Z\rangle > 0$ for all irreducible subvarieties $Z$ of  ${\rm Exc}(\pi)$, $d = \dim Z > 0$, and for every $g\in\operatorname{Aut}^{T}(C)$, there exists an {\rm AC} Calabi-Yau metric $\omega_{g} \in \mathfrak{k}$ such that $\Phi^*\omega_{g}$ is asymptotic to $g^{*}\omega_0$.
\end{mtheorem}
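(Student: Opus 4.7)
The plan is to handle the three parts in order, extracting (1) and (2) as algebraic consequences of the $\C^*$-equivariant test configurations $(W_i, p_i, \sigma_i)$ furnished by Definition \ref{negativeweight}, and reducing (3) to the AC Calabi-Yau existence theorem of the authors' earlier work \cite{Conlon, Conlon3}, adapted to allow a limit over the approximating quasi-regular Reeb fields $\xi_i$.

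For part (1), fix one test configuration $(W, p, \sigma)$ (dropping the subscript $i$) with $p(\sigma(z, x)) = z^\mu p(x)$. Gorensteinness is a local, flat-deformation-invariant property enjoyed by $C = p^{-1}(0)$; its locus in $W$ is Zariski open and $\C^*$-stable, contains $C$, and therefore contains every fiber of $p$ by the contraction property (4) of Definition \ref{negativeweight}, so $V = p^{-1}(1)$ is Gorenstein. The trivialization $\Omega_0$ of $K_C$ extends $\C^*$-equivariantly to a section of the relative dualizing sheaf on a $\C^*$-invariant neighborhood of $C$ in $W$, and its restriction to $V$ trivializes $K_V$. The singular locus $\mathrm{Sing}(W)$ is Zariski closed and $\C^*$-stable; if $\mathrm{Sing}(V)$ were infinite, its orbit closure in $W$ would have positive dimension and meet $C$, forcing $\mathrm{Sing}(C)$ to be positive dimensional, which contradicts $\mathrm{Sing}(C) = \{o\}$. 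Applying Ehresmann's theorem to $p$ on the complement of a small $\C^*$-invariant neighborhood of $o \in C$, and then trivializing via the $\sigma$-flow over an arc from $1$ to $0$, yields the required diffeomorphism of ends between $V$ and $C$.

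For part (2), $\mathrm{Exc}(\pi)$ is compact since $V$ has only finitely many singularities, and any Kähler form on $M$ restricts to a Kähler class on $\mathrm{Exc}(\pi)$; together with the positivity hypothesis on $\mathfrak{k}$, the relative Nakai-Moishezon criterion supplies a $\pi$-ample line bundle, so $\pi$ is projective and $M$ is quasi-projective over the affine $V$. For part (3), use $\Phi$ from part (1) and patch a cutoff of $g^*\omega_0$ into a Kähler form in the class $\mathfrak{k}$ (which exists by Demailly-P\u{a}un given the positivity hypothesis) to obtain a reference $\omega \in \mathfrak{k}$ with $\Phi^*\omega = g^*\omega_0$ outside a compact set. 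Solving the Monge-Ampère equation $(\omega + i \partial \bar\partial u)^n = i^{n^2} \Omega \wedge \overline\Omega$ in weighted Hölder spaces of decay rate $\lambda$ from Definition \ref{negativeweight}\,(5), as in \cite{Conlon3}, produces the AC Calabi-Yau form $\omega_g$.

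The main obstacle is part (3) in the irregular case. When $\dim \T > 1$, $\xi$ itself generates no algebraic $\C^*$-action, and each test configuration $(W_i, p_i, \sigma_i)$ only controls $V$ with respect to the quasi-regular approximant $\xi_i$. Item (5) of Definition \ref{negativeweight} supplies a uniform rate $\lambda$, but producing a single $\Phi$ and a single AC Calabi-Yau metric compatible with the limit $\xi$ requires passing to the limit in the construction above and absorbing the residual ambiguity into the choice of $g \in \operatorname{Aut}^T(C)$. This limiting step, together with the preservation of the polynomial decay through the Monge-Ampère analysis, is delicate and rests on the latest version of Li's work \cite{ChiLi}, as flagged in the footnote to Section \ref{s:intro}.
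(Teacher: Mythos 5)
Your algebraic arguments for part (1) are an attractive shortcut and much of them can be salvaged: once one knows $W$ is Gorenstein along $C$ (via the nonzerodivisor $p$), the contraction property does kill the non-Gorenstein locus, and an equivariant extension of $\Omega_0$ over the affine total space could trivialize $K_V$ --- this is genuinely different from the paper, which instead compactifies $V$ by a Fano orbidivisor $D_i$ and proves triviality of $K_{\overline{V}_i}+\alpha_i[D_i]$ via a Gysin-type theorem for orbifold circle bundles, Kodaira vanishing on infinitesimal neighborhoods, and Grothendieck--Lefschetz. But two of your steps in (1) fail as stated: $\mathrm{Sing}(V)$ is not contained in $\mathrm{Sing}(W)$ (a fiber can be singular where the total space is smooth, so you must work with the non-smooth locus of the morphism $p$), and Ehresmann's theorem does not apply to $p$ because the fibers are affine, hence $p$ is not proper even after deleting a neighborhood of $o$; the diffeomorphism of ends genuinely requires controlling the end at infinity, which is what the weighted projective compactification of Step 1 of the paper (the Pinkham-type normal form, Theorem \ref{normalform}) provides.

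The serious gaps are in (2) and (3). For (2), a real K\"ahler form on the noncompact $M$ does not produce a $\pi$-ample line bundle: relative Nakai--Moishezon requires an integral (or at least rational) class, and there is no a priori reason one exists. The paper obtains quasi-projectivity by compactifying $M$ to an orbifold $X=M\cup D$, proving $h^{0,2}(X)=0$ (which itself needs the triviality of $K_{\overline{V}}+\alpha[D]$ and orbifold Kodaira vanishing), so that the K\"ahler class can be perturbed to a rational one, and then invoking the Kodaira--Baily embedding theorem. For (3), two essential ingredients are missing. First, $\mathfrak{k}$ need not be a K\"ahler class at all (the case $\mathfrak{k}=0$ is allowed, and $M$ is noncompact), so Demailly--P\u{a}un is inapplicable; what is actually needed is that $\mathfrak{k}$ contains a closed $(1,1)$-form extending smoothly to the compactification $X_i=M\cup D_i$ (hence decaying like $r^{-2}$), which rests on $h^{0,1}(X_i)=h^{0,2}(X_i)=0$, a Gysin sequence, and Collins--Tosatti for positivity near $\mathrm{Exc}(\pi)$. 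Second, and most importantly, you never construct the diffeomorphism $\Phi$ along which $\Phi^*J-J_0$ decays at the polynomial rate $\lambda$; translating the algebraic vanishing order $k_i$ of Definition \ref{negativeweight}(5) into this analytic decay rate --- via the $(k_i\mu_i-1)$-comfortable embedding of $D_i$ in $\overline{V}_i$, Li's theorem, and the Type I deformation estimates needed to pass from the auxiliary cone metrics $g_{0,i}$ back to $g_0$ --- is the technical core of the proof, and without it there is no AC reference data of any definite rate to feed into the Monge--Amp\`ere existence theorem.
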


The rate $\lambda_2$ of \eqref{e:asympt:J}, \eqref{e:asympt:Om} may be taken to be $ \lambda + \epsilon$ for any $\epsilon>0$, where $\lambda$ is the $\xi$-weight of $V$. The best possible metric rate $\lambda_1$ as in \eqref{e:asympt:g} is $\lambda_1 = \max\{\lambda,-2\} + \epsilon$ if $\mathfrak{k}|_L \neq 0$, where $L$ denotes the link at infinity of $V$, and $\lambda_1 = \max\{\lambda, -2n\} + \epsilon$ if $\mathfrak{k}|_L = 0$, where $n=\dim C$.

Theorem \ref{existence} contains all known Tian-Yau type existence theorems for AC Calabi-Yau manifolds \cite{BK, Conlon, Conlon3, goto, JoyceALE, Tian, vanC2} as a special case but is strictly more general. In particular, Theorem \ref{existence} is the first result of this type where $C$ is allowed to be an irregular cone; cf.~Section \ref{ss:examples}.

Note that if $n \geq 3$, then $C$ may admit deformations that are not of negative $\xi$-weight. Also, if $V$ is a deformation of negative $\xi$-weight of $C$, then $V$ may not admit any crepant resolutions at all, or may admit several distinct crepant resolutions not all of which are K\"ahler.

\begin{mtheorem}[Classification]\label{classification}
Every {\rm AC} Calabi-Yau manifold $(M,g,J,\Omega)$ is equivalent to one of the examples produced by the proof of Theorem \ref{existence} up to diffeomorphism.
\end{mtheorem}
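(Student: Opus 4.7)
The plan is to invert Theorem~\ref{existence}: starting from an AC Calabi-Yau $(M,g,J,\Omega)$, produce the affine variety $V$ together with a crepant resolution $\pi : M \to V$, show that $V$ is a deformation of negative $\xi$-weight of $C$, and then match the Calabi-Yau structure on $M$ to the one built by Theorem~\ref{existence} via a uniqueness statement for AC Calabi-Yau metrics in a given K\"ahler class with prescribed asymptotic model.

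The first step, which is the heart of the argument, is algebraization. For each $\xi$-eigenfunction $f_0$ on the cone $C$, I would correct $\chi \cdot \Phi_* f_0$ to a genuine holomorphic function $f$ on $M$ by solving a $\bar\partial$-equation in weighted H\"older spaces on the AC end; this is feasible because $\Phi^* J - J_0 = O(r^{\lambda_2})$ with $\lambda_2 < 0$ makes $\bar\partial (\chi \cdot \Phi_* f_0)$ decay sufficiently fast. The resulting ring $R \subset \mathcal{O}(M)$ of holomorphic functions of polynomial growth separates points near infinity, and a Remmert-style contraction then exhibits $M$ as a proper modification $\pi : M \to V$ of a normal affine variety $V$ whose end is biholomorphic to that of $C$. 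Pushing $\Omega$ down through $\pi$ and extending across the exceptional set by Hartogs trivializes $K_V$, so $V$ is Gorenstein with trivial canonical bundle and $\pi$ is crepant; the K\"ahler class $\mathfrak{k} = [\omega]$ is automatically positive on all compact subvarieties of $\mathrm{Exc}(\pi)$.

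Next I would realize $V$ as a deformation of negative $\xi$-weight of $C$. Approximating $\xi$ by $\xi_i \in \mathrm{Lie}(\T)$ with rational periods together with a suitable rescaling $c_i > 0$ gives an algebraic $\C^*$-action on $C$ generated by $-J_0(c_i \xi_i)$. Filtering $\mathcal{O}(V)$ by the growth rate of its elements, pulled back to the end of $M$ and measured against this $\C^*$-action, produces a Rees-type family $W_i \to \C$ with central fiber $C$, general fiber $V$, and the required $\C^*$-equivariance. The vanishing order $k_i$ is bounded below in terms of the analytic rate $\lambda_2$, because the leading non-trivial deviation of $V$ from $C$ detected in $\mathcal{O}(V)$ lives in weight at most $\lambda_2 + \epsilon$; this forces $\limsup_{i \to \infty} \lambda_i < 0$. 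With $V$ now of negative $\xi$-weight, Theorem~\ref{existence} applied to $(V, \pi, \mathfrak{k})$ yields an AC Calabi-Yau metric $\omega_g \in \mathfrak{k}$ asymptotic to $g^* \omega_0$ for some $g \in \operatorname{Aut}^T(C)$ chosen to match the asymptotic models of $J$ and $\Omega$, and a standard uniqueness statement for AC Calabi-Yau metrics in a given K\"ahler class with prescribed asymptotic cone (an AC $\partial\bar\partial$-lemma plus a maximum principle argument) then identifies $\omega = \omega_g$.

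The main obstacle is the second step. Both the construction of the algebraic filtration on $\mathcal{O}(V)$ and the verification of the negative $\xi$-weight bound require a careful algebraic translation of the analytic asymptotic data; in the irregular regime one must further approximate $\xi$ by vectors $\xi_i$ whose flows are algebraic, with uniform quantitative control of both the rescalings $\mu_i / c_i$ and the vanishing orders $k_i$. This is precisely the step where the recent framework of Li \cite{ChiLi} enters, enabling the irregular case beyond the quasi-regular setting of the earlier version of the paper.
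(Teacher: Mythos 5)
Your overall architecture (invert Theorem \ref{existence}: build $\pi\colon M\to V$, exhibit $V$ as a negative-weight deformation of $C$, then conclude by a uniqueness theorem) is the same as the paper's, but two of your three steps contain genuine gaps rather than just unexecuted routine work.

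The most serious gap is in your second step. You assert that filtering $\mathcal{O}(V)$ by growth rate against the approximating $\C^*$-actions "produces a Rees-type family $W_i\to\C$ with central fiber $C$". The central fiber of such a Rees degeneration is ${\rm Spec}$ of the associated graded ring, and identifying this with $\mathcal{O}(C)$ is precisely the hard point: a priori the central fiber is only \emph{normalized} by $C$ and can genuinely differ from it (see Example \ref{r:chili}, where a smooth elliptic curve degenerates to a cuspidal rational curve rather than to $\P^1$). The paper gets around this by first compactifying $M$ to a projective orbifold $X_i=M\cup D_i$ via \cite[Thm 1.6]{ChiLi}, proving $-K_{X_i}=\alpha_i[D_i]$ with $\alpha_i>0$ (which itself requires extending $\Omega$ meromorphically across $D_i$ and a three-annulus argument), and then deducing the surjectivity of the restriction maps $H^0(Y_i,\mathcal{O}(mD_i))\to H^0(D_i,\mathcal{O}(mD_i))$ from a Kodaira-type vanishing theorem; by Theorem \ref{thm:deform-tech}(2) this surjectivity is exactly equivalent to the central fiber being $C$. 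Your direct filtration of $\mathcal{O}(V)$ supplies none of this, and your lower bound on the vanishing orders $k_i$ (needed for Definition \ref{negativeweight}(5)) likewise needs the comfortable-embedding machinery of Appendix \ref{s:chili} applied to $D_i\subset Y_i$, not just the heuristic that "the leading deviation lives in weight at most $\lambda_2+\epsilon$". Relatedly, your first step's $\bar\partial$-construction does not by itself give finite generation of $R$ or affineness of $V$; the paper obtains these from ampleness of $[D_i]$ on the compactification.

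The second gap is in your final step. Knowing that $\omega$ and $\omega_g$ are both AC Calabi-Yau metrics in $\mathfrak{k}$ asymptotic to the same cone is not enough to invoke the uniqueness theorem \cite[Thm 3.1]{Conlon}: one must show they are asymptotic \emph{to each other under a single diffeomorphism} in $C^1_{-\delta}$. The identification of the end of $V$ with the end of $C$ used in the proof of Theorem \ref{existence} (via $\Phi_{\overline V_i}$ and the isomorphism of test configurations $H_i$) and the identification coming from the original AC structure (via $\Phi_{Y_i}$) have no a priori relation, and reconciling them is the content of the paper's Claims 1--3: one shows the composite $\Psi_i=H_{0,i}^{-1}\circ\Phi_{\overline V_i}^{-1}\circ H_{1,i}\circ\Phi_{Y_i}$ extends to an exponential-type map across $D_i$ (using the $\C^*$-equivariance of $\overline H_i$ to control its differential along the divisor) and then estimates $\Psi_i^*g_{0,i}-g_{0,i}$ by an explicit local computation. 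Without some version of this argument your proof identifies the correct complex manifold and K\"ahler class but does not yet identify the metric.
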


As a consequence of Theorem \ref{classification}, every AC Calabi-Yau manifold with a {quasi-regular} asymptotic cone arises from the refinement of the Tian-Yau construction \cite{Tian} given in \cite{Conlon3}.

Any complex affine variety with finite singular set admits a versal family of deformations, which is an affine family over an affine base \cite{Artin, Elkik2}. Any small deformation of the given variety is locally isomorphic to a fiber of this family. In the presence of a $\C^*$-action, it is often possible to globalize this isomorphism. This idea is one of the keys to Kronheimer's classification of ALE hyper-K\"ahler $4$-manifolds \cite[(2.5)]{Kronheimer2}, so we recover this classification here as a consequence of Theorem \ref{classification}.

The same idea applies more generally. For example, it allows us to prove the following.

\begin{mtheorem}[Uniqueness]\label{uniqueness}
Let $(M,g,J,\Omega)$ be an {\rm AC} Calabi-Yau manifold with asymptotic cone $C$. Then the following uniqueness statements hold for $(M,g,J,\Omega)$ up to scaling and diffeomorphism.

{\rm (1)} If $C = \{z_0^2 + \cdots + z_n^2 = 0\} \subset \C^{n+1}$ with its canonical {\rm SO}$(n+1)$-invariant Calabi-Yau cone structure, then either $M = T^*\mathbb{S}^n$ with the Calabi-Yau structure of \cite{Stenzel} $($if $n = 2$, this is true only up to hyper-K\"ahler rotation$)$, or $n = 3$ and $M = \mathcal{O}_{\P^1}(-1)^{\oplus 2}$ with the Calabi-Yau structure of \cite{delaossa}.

{\rm (2)} If $D = {\rm Bl}_{p}\P^2$ and $C$ is the blow-down of the zero section of $K_D$ with the irregular Calabi-Yau cone structure of \cite{Sparks}, then $M$ is either $K_D$ or its flop with one of the Calabi-Yau structures of \cite{goto}.

{\rm (3)} If $C = \C^4/\{\pm {}\rm Id\}$, then $M$ does not exist.
\end{mtheorem}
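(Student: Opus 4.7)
The plan is to apply Theorem \ref{classification}, which asserts that every AC Calabi-Yau manifold with asymptotic cone $C$ arises from the construction of Theorem \ref{existence}, i.e., from a K\"ahler crepant resolution $\pi: M \to V$ of some deformation $V$ of negative $\xi$-weight of $C$. Theorem \ref{existence}(3) absorbs the choice of K\"ahler class and of $g \in \operatorname{Aut}^T(C)$ into scaling and diffeomorphism, so each uniqueness claim reduces to classifying, for the given cone $C$, the pairs $(V,M)$ up to biholomorphism.

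For part (1), the ordinary double point $C = \{z_0^2 + \cdots + z_n^2 = 0\}$ is a hypersurface whose Tjurina algebra is one-dimensional, so its versal deformation is the one-parameter family $\{z_0^2 + \cdots + z_n^2 = t\}$; under the standard weight-one scaling on the $z_i$, the parameter $t$ has weight $2$, so both the trivial deformation and the smoothing have negative $\xi$-weight. Thus $V$ is either $C$ or $V = \{z_0^2 + \cdots + z_n^2 = 1\} \cong T^*\mathbb{S}^n$. The smoothing is already smooth and gives $M = T^*\mathbb{S}^n$ directly. For $V = C$, a discrepancy calculation at the weighted blow-up of the origin shows that no crepant resolution exists for $n \geq 4$, that the unique crepant resolution for $n = 2$ is Eguchi--Hanson space (biholomorphic to $T^*\mathbb{S}^2$ after a hyper-K\"ahler rotation of the complex structure, accounting for the parenthetical in the statement), and that for $n = 3$ the two small resolutions are both biholomorphic to $\mathcal{O}_{\P^1}(-1)^{\oplus 2}$.

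For part (2), let $D = \operatorname{Bl}_p \P^2$ and let $C$ be the affine cone obtained by contracting the zero section of $K_D$. I would first argue that the only deformation of $C$ of negative $\xi$-weight is $V = C$ itself, by decomposing the tangent cohomology $T^1(C)$ under the Reeb torus action and checking that all weights occurring are non-negative relative to the sign convention of Definition \ref{negativeweight}. The relevant decomposition is the Schlessinger/Kempf formula $T^1(C) = \bigoplus_k H^1(D, T_D \otimes K_D^{-k})$, and the requisite vanishing uses that $-K_D$ is ample on the del Pezzo $D$ together with $H^1(D,T_D) = 0$. Next, I enumerate K\"ahler crepant resolutions of $C$: the total space $K_D$ is crepant (its canonical bundle is trivial by the line bundle Euler sequence), and the MMP relative to $C$ produces exactly one flop along a $\P^1$ inside the zero section, giving a second resolution; both admit Ricci-flat K\"ahler metrics as exhibited by Goto \cite{goto}.

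For part (3), the cone $C = \C^4/\{\pm \operatorname{Id}\}$ is terminal and Gorenstein by Reid's age criterion (the nontrivial element has age $2>1$ on $\C^4$), and therefore admits no crepant resolution at all. Moreover, Schlessinger's rigidity theorem for isolated quotient singularities $\C^n/G$ in dimension $n \geq 3$ gives $T^1(C) = 0$, so the only deformation of $C$ is $V = C$. Combining the two statements with Theorem \ref{classification} rules out any AC Calabi-Yau manifold asymptotic to $C$. The main obstacle across the three parts is the weight bookkeeping in (2), complicated by the fact that the Reeb torus has rank $>1$: the cotangent cohomology decomposes under a torus action rather than a single $\C^*$, and the meaning of ``negative $\xi$-weight'' has to be traced through Definition \ref{negativeweight} along a suitable sequence $\xi_i \to \xi$.
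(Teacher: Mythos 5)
Your overall strategy is the one the paper follows: Theorem \ref{classification} reduces each case to listing the negative-weight deformations $V$ of $C$ and then the K\"ahler crepant resolutions of each $V$. However, two of your key steps do not hold as stated. The most serious is the rigidity argument in part (2). The graded pieces of $T^1_C$ for the anticanonical cone over $D=\mathrm{Bl}_p\P^2$ are \emph{not} killed by ampleness of $-K_D$ together with $H^1(D,T_D)=0$: using the Euler sequence $0\to\mathcal{O}_Y\to T_Y\to\pi^*T_D\to 0$ on the punctured cone $Y=K_D^\times$, the graded piece of $T^1_C$ in the relevant negative weight is $\ker\bigl(H^1(D,T_D\otimes K_D)\to H^2(D,K_D)\bigr)$, and $H^1(D,T_D\otimes K_D)\cong H^1(D,\Omega^1_D)^*\cong\C^2$ by Serre duality while $H^2(D,K_D)\cong\C$, so this kernel is nonzero (equivalently, Altmann's combinatorial formula gives $\dim T^1_{-1}=N-3=1$ for the quadrilateral fan polytope of $\mathrm{Bl}_p\P^2$). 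The cone is nevertheless rigid, but only because this first-order deformation is obstructed: the quadrilateral admits no nontrivial lattice Minkowski decomposition, so the reduced versal base is a point. This is precisely why the paper cites Altmann's computation of the full versal family rather than a $T^1$ vanishing; note that your proposed vanishing argument would equally ``prove'' rigidity of the cone over $\P^1\times\P^1$, which in fact smooths to $\P^3\setminus Q^2$. A second gap occurs in parts (1) (for $n\ge4$) and (3): terminality only forces a crepant resolution to be \emph{small}, so a discrepancy computation on one blow-up does not rule out crepant resolutions. You must separately exclude small resolutions, e.g.\ via $\Q$-factoriality of the quotient singularity in (3) and via the Picard rank $1$ of $Q^{n-1}$ (factoriality of the quadric cone) in (1), which is how the paper argues.

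There is also a more structural point you pass over: semi-universality of the versal family only classifies deformations in a germ (analytic or \'etale) of the singular point, whereas you need the affine variety $V$ to be \emph{globally} isomorphic to a fiber. Closing this gap is the content of the paper's Lemma \ref{global}: one uses the $\C^*$-equivariance of the degeneration $W\to\C$ (available for quasi-homogeneous complete intersections by Slodowy and for toric cones by Altmann) to propagate the local classifying map and the local isomorphism over all of $W$, exploiting the fact that the action contracts $W$ into the vertex. Your remaining steps -- the Tjurina algebra of the ordinary double point, the two small resolutions of the conifold, Eguchi--Hanson and the hyper-K\"ahler rotation for $n=2$, and the unique flop of $K_{\mathrm{Bl}_p\P^2}$ along its $(-1)$-curve -- agree with the paper, which also records an alternative route to parts of (1) via the classification of del Pezzo $3$-folds applied to the compactified Remmert reduction.
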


See Section \ref{ss:proofC} for additional examples as well as alternative proofs of some of these uniqueness results using the classification theory of del Pezzo varieties rather than deformation theory.

In \cite{SpiroJason} a $G_2$ analog of Theorem \ref{uniqueness}(1) concerning the Bryant-Salamon manifolds was proved using a very interesting method of extending Killing fields from the asymptotic cone.

Our proofs of Theorems \ref{classification}--\ref{uniqueness} crucially rely on the fact that $K_M$ is trivial, but for {ALE spaces} as in Theorem \ref{uniqueness}(3) it turns out that no conditions on $K_M$ are required at all \cite{HRS}.

\subsection{Open questions}

(1) We still know only one concrete example of an AC Calabi-Yau manifold that is a smoothing of an irregular Calabi-Yau cone. This is the example of our previous paper \cite{Conlon3}, which was constructed using complicated ad hoc computations and some lucky numerology. Thanks to Theorem \ref{existence}, it is now very easy to reproduce this example via toric geometry and to find its true decay rate, which was not known (see Theorem \ref{toric-eg}). Can one find new examples in this way?

(2) Is there a more natural definition of a deformation of negative $\xi$-weight that does not require approximating the action of $\xi$ by a sequence of $\C^*$-actions?

(3) It should be possible to describe the $\xi$-weight $\lambda$ of $V$ as an eigenvalue of the linear operator induced by $\xi$ on the infinitesimal deformation space of $C$. What can be said about this eigenvalue? For regular Calabi-Yau cones, $\lambda < -1$, and for $3$-dimensional toric Calabi-Yau cones, $\lambda = -3$.

(4) Is the parameter $g$ of Theorem \ref{existence} always caused by diffeomorphism and scaling? This is easy to see in special cases but seems far from clear in general.

(5) Even in view of recent progress \cite{biq-delc, Chiu, CDR, CR, DS2, LiYang, LiuGang, LiuSz1, LiuSz2, Szekel2, Szekel} it seems difficult to generalize our work to the setting of asymptotic cones with singular links, one problem being that algebraic cones with non-isolated singularities have an infinite-dimensional deformation theory. But even the setting of smooth links with {logarithmic} convergence rates is not well-understood.

\subsection{Organization of the paper}

In Section \ref{s:existence} we will prove Theorem \ref{existence}. This proof requires a large number of auxiliary results for which we were unable to find a  reference in the literature, and which may be of some independent interest. We collect these together in four appendices:~a suitable concept of deforming an orbifold to the normal cone of a suborbifold (Appendix \ref{s:deform-to-nc}), a discussion of Type I deformations of Sasaki structures with estimates (Appendix \ref{s:type1}), a new Gysin type theorem for orbifold $S^1$-bundles (Appendix \ref{s:gysin}), and a review and extension to the quasi-regular case of Li's main results in \cite{ChiLi} (Appendix \ref{s:chili}). Theorem \ref{classification} will be proved in Section \ref{s:classification}. This proof is relatively easy compared to the proof of Theorem \ref{existence} because we have tried to make Theorem \ref{existence} as strong as possible. In Section \ref{s:uniqueness}, we first review some background from deformation theory (Section \ref{ss:def-thy}) and then apply this to prove Theorem \ref{uniqueness} (Section \ref{ss:proofC}) and to state a clean existence theorem in the case of toric cones (Section \ref{ss:examples}). In Section \ref{ss:def-thy}, we will also explain why the same arguments allow us to recover Kronheimer's classification of $4$-dimensional hyper-K\"ahler ALE spaces \cite{Kronheimer, Kronheimer2}.

\subsection{Acknowledgments}

We thank V.~Apostolov, T.~Collins, A.~Corti, S.~Donaldson, M.~Faulk, M.~Haskins, C.~LeBrun, C.-C.~M.~Liu, I.~Morrison, R.~Rasdeaconu, F.~Rochon, S.~Sun, I.~\c{S}uvaina, G. Sz\'ekelyhidi, R.~Thomas, F.~Tong, V.~Tosatti, M.~Verbitsky and J.~Viaclovsky for useful discussions over many years. Special thanks go to C.~Li for writing \cite{ChiLi} and for showing us Example \ref{r:chili}, which revealed a mistake in an earlier version of this paper. RC is partially supported by NSF grant DMS-1906466. HH is partially supported by NSF grant DMS-1745517 and by the DFG under Germany's Excellence Strategy EXC 2044-390685587 ``Mathematics M\"unster:~Dynamics-Geometry-Structure," as well as by the CRC 1442 ``Geometry:~Deformations and Rigidity'' of the DFG.

\section{Theorem \ref{existence}}\label{s:existence}

Let $C$ be an $n$-dimensional Calabi-Yau cone ($n \geq 2$) with Reeb vector field $\xi$ and Reeb torus $\T$. Let the affine variety $V$ be a deformation of negative $\xi$-weight of the cone $C$, viewed as a normal affine variety as usual. Thus, we have sequences $\xi_i \in {\rm Lie}(\T)$ and $c_i \in \R^+$ such that

\begin{enumerate}
\item $\xi_i \to \xi$ as $i \to \infty$,

\item the vector field $-J(c_i \xi_i)$ generates an effective $\C^*$-action on $C$,

\item there exists a $1$-parameter degeneration $p_i: W_i \to \C$ of $V$ to $C$, equivariant with respect to some $\C^*$-action $\sigma_i: \C^* \times W_i \to W_i$ that restricts to the action of (2) on the central fiber $C$ and with respect to the standard $\C^*$-action of weight $\mu_i \in \N$ on the base,

\item $\lim_{z\to 0} \sigma_i(z,x) = o$, the apex of $C$, for all $x \in W_i$,

\item and the sequence $\lambda_i := -(k_i\mu_i)/c_i$ is uniformly bounded away from zero, where $k_i \in \N \cup \{\infty\}$ is the supremum of all $k \in \N$ such that the base change of the family $p_i$ from ${\rm Spec}\;\C[t]$ to ${\rm Spec}\;\C[t]/(t^k)$ is isomorphic to a trivial product family.
\end{enumerate}

\noindent Given this set-up, Theorem \ref{existence} will be proved as follows.\medskip

\noindent \emph{Step 1}. Let $D_i$ be the orbifold quotient of $C$ by the effective $\C^*$-action of item (2). Generalizing an idea of Pinkham \cite{Pinkham}, we construct a $\C^*$-equivariant affine embedding $\phi_i: W_i \to \C \times\C^{N_i}$ such that $p_i = {\rm pr}_{\C} \circ \phi_i$ and such that the corresponding weighted projective closure $\overline{V}_i$ of $V \cong \phi_i(p_i^{-1}(1))$ is an orbifold near infinity with compactifying divisor $\overline{V}_i \setminus V = D_i$. Moreover, after removing $D_i$, the deformation of $\overline{V}_i$ to the normal cone of $D_i$ in $\overline{V}_i$ (see Appendix \ref{s:deform-to-nc}) is $\C^*$-equivariantly isomorphic to the base change of $p_i$ from ${\rm Spec}\;\C[t]$ to ${\rm Spec}\;\C[s]$ under the map $t = s^{\mu_i}$.\medskip

\noindent \emph{Step 2}. The existence of the projective compactifications $\overline{V}_i$ allows us to prove Theorem \ref{existence}(1). The triviality of $K_V$ requires many ingredients, including the theory of Type I deformations of Sasaki structures (see Appendix \ref{s:type1}) and a new Gysin type theorem for orbifolds (see Appendix \ref{s:gysin}).\medskip

\noindent \emph{Step 3}. A minor generalization of one of the key results of \cite{ChiLi} (see Appendix \ref{s:chili}) immediately tells us that the embedding of $D_i$ into $\overline{V}_i$ is $(k_i\mu_i-1)$-comfortable. Using a method of \cite{Conlon3, helb, ChiLi}, this allows us to construct a diffeomorphism $\Phi_i$ between neighborhoods of infinity in $C$ and in $V$ such that $|\Phi_i^*(J) - J_0|_{g_{0,i}} = O_i(r_i^{\lambda_i})$ as $r_i \to \infty$, where $J$ is the complex structure on $V$, $g_{0,i}$ is a certain K\"ahler cone metric on $C$ with Reeb vector field $\xi_i$ (see Appendix \ref{s:type1}), and $r_i$ is the radius function of $g_{0,i}$. By construction, $r^{1-\epsilon_i} \leq r_i \leq r^{1+\epsilon_i}$ on $\{r \geq 1\}$, where $\epsilon_i \to 0$. Thus, $J$ is asymptotic to $J_0$ with respect to $g_0$ at rate $\lambda + \epsilon$ for all $\epsilon > 0$. Note that $\lambda := \limsup_{i \to \infty} \lambda_i < 0$ by assumption.  \medskip

\noindent \emph{Step 4}. For a K\"ahler crepant resolution $\pi: M \to V$ and a class $\mathfrak{k}  \in H^2(M,\R)$ that pairs positively with all subvarieties of ${\rm Exc}(\pi)$, we will prove that $\mathfrak{k}$ contains the restriction of a K\"ahler form $\omega_i$ on $M \cup D_i$. As an aside, we will also prove in this step that $M$ is quasi-projective, i.e., Theorem \ref{existence}(2). Any such form $\omega_i$ automatically satisfies $|\Phi_i^*(\omega_i)|_{g_{0,i}} = O_i(r_i^{-2})$ as $r_i \to \infty$. Thus, in analogy with Step 3, for all $\epsilon > 0$ there exist K\"ahler forms on $M$ of decay rate $-2+\epsilon$ with respect to $g_0$.\medskip

\noindent \emph{Step 5}. We are now able to construct AC Ricci-flat K\"ahler metrics on $M$ in the class $\mathfrak{k}$ by solving a complex Monge-Amp\`ere equation as in \cite{Conlon}. This proves Theorem \ref{existence}(3).

\subsection{Step 1}

This step follows from Theorem \ref{normalform} below, which generalizes the idea of \cite[Thm 4.2]{Pinkham} by using weighted rather than unweighted (cf.~the set-up in \cite[\S 3.1]{Pinkham}) projective completions.

The following definition is a minor extension of a definition of Tian-Yau \cite[Defn 1.1(iii)]{Tian}.

\begin{definition}\label{d:adm}
Let $Y$ be a reduced, irreducible, compact complex space. A reduced and irreducible hypersurface $D$ of $Y$ is an \emph{admissible divisor} if there is an open neighborhood $U$ of $D$ such that $U$ is a complex orbifold without codimension-$1$ singularities and $D$ is a complex suborbifold of $U$ with $U^{\rm sing}\subset D$. We then have an associated $\Q$-Cartier divisor or a $\Q$-line bundle $[D]$ on $Y$.
\end{definition}

Note that, unlike in \cite{Tian}, $Y$ is allowed to have singularities in the complement of the open set $U$, and these singularities are not required to be of orbifold type. We will always work in the orbifold category on $U$. Thus, for instance, a \emph{section} of $[D]$ is a section in the usual sense on $Y \setminus D$, and is given by invariant sections on the local uniformizing charts of the orbifold structure on $U$.

If $[D]$ is ample on $Y$, then $Y$ admits a deformation to the ``normal cone'' of $D$; see Appendix \ref{s:deform-to-nc}.

\begin{theorem}\label{normalform}
Let $C$ be an irreducible affine variety with a unique singular point $o \in C$. Let $W$ be an irreducible  affine variety and let $p: W \to \C$ be a regular function such that $p^{-1}(0) \cong C$. Let $\sigma: \C^* \times W \to W$ be an effective algebraic $\C^*$-action such that
\begin{itemize}
\item[$\bullet$] $p(\sigma(z,x)) = z^\mu p(x)$ for some $\mu \in \N$ and all $z \in \C^*, x \in W$, and

\item[$\bullet$] $\lim_{z\to 0} \sigma(z,x) = o$ for all $x \in W$.
\end{itemize}
\noindent Then the following hold.

{\rm (1)} There exist $\mu_1, \ldots, \mu_N \in \N$ coprime and an embedding $\phi: W \to \C_t \times \C^N_{z_1, \ldots, z_N}$ such that
\begin{itemize}
\item[$\bullet$] $p= t \circ \phi$, and

\item[$\bullet$] $\phi(\sigma(z,x)) = {\rm diag}(z^\mu, z^{\mu_1}, \ldots, z^{\mu_N})\cdot \phi(x)$ for all $z \in \C^*$, $x \in W$.
\end{itemize}

{\rm (2)} Denote the affine variety $p^{-1}(1)$ by $V$. Let $\overline{V}$ be the closure of $\phi(V)$ in the weighted projective space $\P(\mu, \mu_1, \ldots, \mu_N, 1) \supset \C_t \times \C^N_{z_1,\ldots,z_N}$ and let $D = \overline{V} \setminus \phi(V)$ be the compactifying divisor. Then $D$ is admissible in $\overline{V}$ with respect to the orbifold structure inherited from $\P(\mu,\mu_1,\ldots,\mu_N,1)$.

{\rm (3)} The base change of $p$ from $\C_t$ to $\C_s$ under the map $t = s^\mu$ is $\C^*$-equivariantly isomorphic to the deformation of $\overline{V}$ to the normal cone of $D$ with its compactifying divisor removed.
\end{theorem}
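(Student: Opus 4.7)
My plan is to prove (1)--(3) in order, with the central geometric observation that the weighted-projective affine cone of $\overline{V}$ is precisely the base-changed family $\tilde{W} := W \times_{\C_t,\, t = s^\mu} \C_s$, an identification that then powers both (2) and (3). For (1), the hypothesis $\lim_{z\to 0}\sigma(z,x) = o$ forces $\C[W]$ to be $\N$-graded with $\C[W]_0 = \C$: a $\C^*$-eigenfunction of weight $k$ satisfies $f(\sigma(z,x)) = z^k f(x) \to f(o)$ as $z\to 0$, which rules out $k<0$ and pins down weight-zero eigenfunctions as constants. Finite generation of $\C[W]$ gives finite-dimensional graded pieces. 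The induced $\C^*$-action on $C = \mathrm{Spec}(\C[W]/(p))$ is effective, so the weights on $\C[C]$ have $\gcd = 1$. I would lift homogeneous generators $\bar f_1,\ldots,\bar f_N$ of $\C[C]$ of coprime positive weights $\mu_1,\ldots,\mu_N$ to eigenfunctions $f_i \in \C[W]$ of the same weights. Setting $R := \C[p,f_1,\ldots,f_N]$, one has $R + p\C[W] = \C[W]$, and iterating gives $\C[W] = R + p^k\C[W]$ for every $k$; for homogeneous $g$ of weight $d$, taking $k$ with $k\mu > d$ forces $g \in R$ since $p^k\C[W]$ has no nonzero component of weight $\leq d$. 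Hence $R = \C[W]$, and $\phi(x) := (p(x), f_1(x), \ldots, f_N(x))$ is the required equivariant embedding with $p = t\circ \phi$.

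For (2) and (3), I would compute the affine cone $\hat V \subset \C^{N+2}$ of $\overline V \subset \P(\mu,\mu_1,\ldots,\mu_N,1)$ explicitly: its ideal is generated by the weighted-homogeneous equations $F_\alpha(t,z) = 0$ of $W \subset \C_t\times \C^N$ together with the linear relation $t = w^\mu$. Eliminating $t$ identifies $\hat V$ with $\tilde W = \{F_\alpha(w^\mu,z)=0\} \subset \C_w\times \C_z^N$, carrying the $\C^*$-action of weights $(\mu_1,\ldots,\mu_N,1)$ with unique fixed point $o = (0,0)$. For part (2), I would observe that at any $(0,z_0)\in \tilde W$ with $z_0 \in C\setminus\{o\}$, the partials $\partial_w F_\alpha(w^\mu,z)$ vanish at $w=0$ when $\mu \geq 2$, while $\partial_{z_j}F_\alpha(0,z_0)$ attains full rank by smoothness of $C$ away from $o$; hence $\tilde W$ is smooth in a $\C^*$-invariant open neighborhood $U$ of $\{w=0\}\setminus\{o\}$. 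The $\C^*$-quotient realizes an open neighborhood of $D = (U\cap\{w=0\})/\C^*$ in $\overline V$ as a smooth complex orbifold, and the weight-one section $w$ cuts out $D$ as a Cartier suborbifold with $\overline{V}^{\mathrm{sing}}$ near $D$ contained in $D$. Absence of codim-$1$ orbifold singularities follows because, by coprimality of $\mu_1,\ldots,\mu_N$, the generic stabilizer along $D$ is trivial, so any nontrivial stabilizer locus in $D$ has codim $\geq 1$ in $D$, hence codim $\geq 2$ in $\overline V$.

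For (3), the projection $s := w : \tilde W \to \C$ is $\C^*$-equivariant, with general fiber $V$ (rescale to $w=1$) and central fiber $C$, reproducing the base change of $p$ under $t = s^\mu$. On the other hand, $\hat V = \mathrm{Spec}\,\bigoplus_{n\geq 0}\Gamma(\overline V,\mathcal O(nD))$ is the section ring of the Cartier divisor $D$ cut out by $w$, with structure map $\hat V \to \C$ induced by multiplication by $w \in \Gamma(\overline V,\mathcal O(D))$; under the orbifold conventions of Appendix \ref{s:deform-to-nc}, this realizes the deformation of $\overline V$ to the normal cone of $D$ with the compactifying divisor removed. The $\C^*$-actions match tautologically, giving the required equivariant isomorphism $\tilde W \cong \mathrm{DNC}(\overline V,D) \setminus D$. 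The main technical obstacle I anticipate is part (2): carefully verifying all clauses of Definition \ref{d:adm} in the orbifold category, and matching the affine-cone picture above with the specific version of deformation to the normal cone used in Appendix \ref{s:deform-to-nc}; the embedding step and the basic affine-cone identification are essentially formal once the grading of $\C[W]$ is in place.
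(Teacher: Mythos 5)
Your overall architecture matches the paper's for parts (1) and (3): part (1) is the same eigenfunction-decomposition argument (your explicit generation argument via $\C[W]=R+p^k\C[W]$ is a nice concrete substitute for the paper's ``the eigencomponents still separate points and tangent directions''), and part (3) is the same identification of the weighted affine cone $C(\overline{V})$ with $\{F_\alpha(w^\mu,\zeta)=0\}$ and of the function $w$ with the parameter of the deformation to the normal cone. Where you genuinely diverge is part (2): the paper works in local uniformizing charts $\tilde U/\Gamma$ around a point of $D$, uses the $\C^*$-flow to identify $\tilde V_t$ with shrinking tubular neighborhoods of $\tilde D$ in $\tilde V_1$, and then invokes explicit equations for the singular locus of the family $\{\tilde V_t\}$ to transfer smoothness from $\tilde V_0$ to $\tilde V_t$ for small $t\neq 0$; you instead apply the Jacobian criterion directly on the base-changed cone $\tilde W$ at points of $\{w=0\}\setminus\{o\}$, using smoothness of $C\setminus\{o\}$. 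Your route is more direct, and as a bonus the same Jacobian computation shows $T\tilde W\not\subset\{dw=0\}$ along $C\setminus\{o\}$, which gives the reducedness of $D$ (the paper's separate ``$D_t\subseteq D_0$, hence $D_t=D_0$ by dimension and irreducibility'' paragraph) essentially for free --- though you should say this explicitly, since ``$D$ carries no multiplicities'' is part of what admissibility requires.

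There is one genuine soft spot. You assert that ``the induced $\C^*$-action on $C$ is effective, so the weights on $\C[C]$ have $\gcd=1$,'' and you use this twice: to get the coprimality of $\mu_1,\dots,\mu_N$ in (1), and to get the trivial generic stabilizer along $D$ in (2). Effectiveness on $C$ does \emph{not} follow from effectiveness of $\sigma$ on $W$: since $\C[W]=\C[p,f_1,\dots,f_N]$, effectiveness on $W$ only gives $\gcd(\mu,\mu_1,\dots,\mu_N)=1$, and $\mu$ can absorb the whole gcd. For instance $W=\{a^2=b^3+t^{12}\}$ with weights $(1,4,6)$ on $(t,b,a)$ satisfies every hypothesis of the theorem, yet the induced action on $C=\{a^2=b^3\}$ has kernel $\Z_2$ and your generators $b,a$ have weights of $\gcd$ equal to $2$. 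The paper's construction sidesteps the coprimality issue by taking the $z$-coordinates to be the eigencomponents of an embedding of $W$ itself (so $p$ may reappear among them), which makes $\gcd(\mu_1,\dots,\mu_N)=1$ a direct consequence of effectiveness on $W$. The trivial-generic-stabilizer claim along $D$, however, really does need effectiveness on $C\setminus\{o\}$; this holds in the paper's application (it is hypothesis (2) of Definition \ref{negativeweight}) and is implicitly used by the paper as well, but in your write-up it is presented as a deduction rather than as an input, and as stated that deduction is false. You should either import effectiveness on $C$ as a standing assumption justified by the application, or restructure (1) along the paper's lines and then argue the stabilizer point separately.
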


\begin{proof}
(1) Decompose the coordinate ring of $W$ as the algebraic direct sum of the eigenlines of the $\C^*$-action induced by $\sigma$. Note that $p$ is a $\mu$-eigenfunction. If $f$ is any nonconstant $\lambda$-eigenfunction, then $\lambda > 0$ by letting $z \to 0$ in the identity $f(\sigma(z,x)) = z^\lambda f(x)$, where $f(x) \neq f(o)$. Given an affine embedding of $W$, we can decompose each coordinate function into eigenfunctions of the $\C^*$-action. The set of all eigenfunctions obtained in this way defines a new affine embedding of $W$ because the new functions still separate points and Zariski tangent directions. The new embedding is obviously equivariant, with coprime eigenvalues because $\sigma$ is effective. We may add $p$ as a coordinate.

To prove (2) and (3), we begin by observing that the ideal of $\phi(W)$ is generated by a finite set of polynomials that are homogeneous with respect to the $\C^*$-action with weights $\mu, \mu_1, \ldots, \mu_N$. Clearly these polynomials can be written in the form $f_i(z_1, \ldots, z_N) + t g_i(t, z_1, \ldots, z_N)$ ($i = 1, \ldots, I$), where $f_i,g_i$ are homogeneous polynomials as well. Let $[\tau,\zeta_1,\ldots,\zeta_N,w]$ denote the natural homogeneous coordinates on $\P(\mu,\mu_1,\ldots,\mu_N,1)$, so that the inclusion of $\C \times \C^N$ as an affine chart is given by
\begin{equation}\tau = t w^\mu, \;\, \zeta_n = z_n w^{\mu_n} \;\,(n = 1, \ldots, N)\label{e:fcku}\end{equation}
and the closure of $\phi(W)$ is cut out by the equations
\begin{align}\label{e:fcku2}
f_i(\zeta_1, \ldots, \zeta_N) + \tau g_i(\tau, \zeta_1, \ldots, \zeta_N) = 0 \;\,(i = 1, \ldots, I).
\end{align}

For $t \in \C$, let $V_t = p^{-1}(t)$, let $\overline{V}_t$ denote the closure of $\phi(V_t)$, and let $D_t = \overline{V}_t \setminus \phi(V_t)$ denote the compactifying divisor. A priori $\overline{V}_t$ may be arbitrarily singular at or near the hyperplane $w = 0$, and if $\overline{V}_t$ does not intersect this hyperplane transversely then $D_t$ may carry multiplicities. The content of item (2) is precisely to rule out these possibilities for $t = 1$ by using the fact that the picture for $t = 0$ is completely understood and that we have a family of varieties connecting $\overline{V}$ to $\overline{V}_0$.

By construction, $\overline{V}_0$ intersects the hyperplane $w = 0$ transversely in a reduced and irreducible subvariety $D_0$, which is a connected suborbifold of $\P(\mu_1,\ldots,\mu_N)$ isomorphic to $(C \setminus \{o\})/\C^*$ as an orbifold. In particular, because $\phi(V_0)$ is smooth in a neighborhood of infinity, $D_0$ is admissible in $\overline{V}_0$. Moreover, $D_0$ is cut out by the equations $f_i(\zeta_1, \ldots, \zeta_N) = 0$. For $t \neq 0$, it directly follows from \eqref{e:fcku} and \eqref{e:fcku2} that $D_t$ is a subscheme of $D_0$, but in general this inclusion could be strict. However, since $W$ is irreducible, all fibers of $p$ are purely of dimension $(\dim W)-1$. Thus, $D_t$ is a full-dimensional projective subscheme of the irreducible projective variety $D_0$, and hence $D_t  = D_0$. In particular, $D_t$ is reduced, so $\overline{V}_t$ intersects the hyperplane $w = 0$ transversely. We abbreviate $D_t = D_0 = D$.

We now prove item (2) by studying the family $\overline{V}_t$ locally near an arbitrary point $x$ of its common compactifying divisor $D$. Pick any $n \in \{1,\ldots,N\}$ such that $\zeta_n(x) \neq 0$. As usual, the locus $\zeta_n \neq 0$ in $\P(\mu,\mu_1,\ldots,\mu_N,1)$ can be identified with $\C^{N+1}/\Z_{\mu_n}$, where a $\mu_n$-th root of unity $\xi \in \Z_{\mu_n}$ acts by the diagonal matrix ${\rm diag}(\xi^\mu, \xi^{\mu_1}, \ldots, \xi^{\mu_{\hat{n}}}, \ldots, \xi^{\mu_N},\xi)$. Let $\tilde{x}$ be a preimage of $x$ in $\C^{N+1}$ and let $\Gamma$ be the stabilizer of $\tilde{x}$ in $\Z_{\mu_n}$. Then we may write a small open neighborhood of $x$ as $\tilde{U}/\Gamma$, where $\tilde{U}$ is a ball centered at $\tilde{x}$ in $\C^{N+1}$. Let $\tilde{V}_t, \tilde{D}$ denote the corresponding local lifts of $\overline{V}_t, D$. Thus, $\tilde{D}$ is a smooth submanifold of $\tilde{U}$ containing $\tilde{x}$, and $\tilde{V}_t$ is a pure-dimensional analytic subvariety of $\tilde{U}$ containing $\tilde{D}$ as a divisor. In particular, $\dim \tilde{V}_t = (\dim \tilde{D}) + 1$ for all $t \in \C$. We already know that $\tilde{{V}}_0$ is smooth in $\tilde{U}$, and to prove (2) we need to show that $\tilde{V}_1$ is smooth at $\tilde{x}$.

Let $(\tilde\tau, \tilde\zeta_1, \ldots, \tilde\zeta_{\hat{n}}, \ldots, \tilde\zeta_N, \tilde{w})$ denote the natural affine coordinates on $\C^{N+1}$ provided by the above construction. Then an element $z \in \C^*$ acts via $\sigma$ by fixing the first $N$ coordinates and multiplying $\tilde{w}$ by $z^{-1}$. Thus, by construction, if $t$ is a small positive real number, then the action of $t^{-1/\mu}$ via $\sigma$ identifies $\tilde{V}_t$ with a (smaller and smaller as $t \to 0^+$) tubular neighborhood of $\tilde{D}$ in $\tilde{V}_1$. We therefore reduce to proving that $\tilde{V}_t$ is smooth at $\tilde{x}$ for all sufficiently small values of $t \neq 0$.

To prove this, recall that the union of the varieties $\tilde{V}_t$ in $\tilde{U}$ is cut out by the equations
$$f_i(\tilde{\zeta}_1, \ldots, 1, \ldots, \tilde{\zeta}_N) + \tilde\tau g_i(\tilde\tau,\tilde{\zeta}_1, \ldots, 1, \ldots \tilde{\zeta}_N) = 0,$$
and that, for any fixed $t\in\C$, the variety $\tilde{V}_t$ is cut out by the additional equation $$\tilde\tau = t \tilde{w}^\mu.$$
Since $\tilde{V}_t$ is purely $((\dim \tilde{D})+1)$-dimensional, the proof of \cite[p.100, Prop 1.104]{singing} now tells us that there exists an explicit set of polynomials in $\tilde\tau, \tilde\zeta_1, \ldots, \tilde\zeta_{\hat{n}}, \ldots, \tilde\zeta_N,\tilde{w}$ and $t$ that cut out the singular locus of $\tilde{V}_t$ for all $t \in \C$. Since $\tilde{V}_0$ is smooth, it follows that $\tilde{V}_t$ is smooth at $\tilde{x}$ for $t \neq 0$ small.

(3) We may view the weighted homogeneous coordinate function $w$ on $\P(\mu,\mu_1,\ldots,\mu_N,1) \supset \overline{V}$ as a global section of the line orbibundle $\mathcal{O}(1)$. The zero locus of this section on $\overline{V}$ is the admissible divisor $D$ with multiplicity $1$. It follows that the $\Q$-line bundle induced by $D$ on $\overline{V}$ is $L = \mathcal{O}(1)|_{\overline{V}}$. This is a line bundle away from $D$ and a line orbibundle in a small tubular neighborhood of $D$, and is ample as a $\Q$-line bundle. Let $\pi:L \to \overline{V}$ denote the bundle projection. Let $\sigma$ denote the section of $L$ that cuts out $D$, which is given by the restriction to $\overline{V}$ of the section $w$. Then away from its central fiber, the deformation of $\overline{V}$ to the normal cone of $D$ can be written as
\begin{align}\label{e:sickness1}
\{(q,s) \in L \times \C^*: s \cdot q = \sigma(\pi(q))\},
\end{align}
where $s$ is the family parameter. We now pass to the dual line bundle and contract its zero section to a point. By definition, $(L^*)^\times$ is the weighted affine cone $C(\overline{V})$ over $\overline{V}$ in $\C^{N+2}$.
Write a general point of $\C^{N+2}$ as $p = (\tau, \zeta_1, \ldots, \zeta_N,w)$. In this picture, \eqref{e:sickness1} turns into
\begin{equation}\label{e:sickness2}
\{(p,s)\in C(\overline{V}) \times \C^*: w=s\}
\end{equation}
because away from the divisor $D$, the section of $L^* = \mathcal{O}(-1)|_{\overline{V}}$ dual to $\sigma$ picks out the unique point with $w = 1$ in each $\C^*$-orbit in $C(\overline{V})$, so the equation dual to $s \cdot q = \sigma(\pi(q))$ is exactly $w = s$. This equation is still correct at $s = 0$ because the central fiber of the deformation to the normal cone is the hyperplane section $w = 0$ of $(L^*)^\times = C(\overline{V})$. Now recall that $C(\overline{V})$ is cut out by
$$f_i(\zeta_1,\ldots,\zeta_N) + \tau g_i(\tau,\zeta_1, \ldots, \zeta_N) = 0, \;\, \tau = w^\mu.$$
Then it is clear that the family \eqref{e:sickness2}, extended to $s = 0$, is the base change of the original family $W$ or $\phi(W) \subset \C_t \times \C^{N}_{z_1,\ldots,z_N}$ under the map $t = s^\mu$, as required.
\end{proof}

\begin{remark}\label{r:chili3}
As in \cite[Thm 4.2]{Pinkham}, the singularity $o \in C$ is not required to be normal, even though in our applications it always will be. If this singularity is not normal, then Proposition \ref{normalform} yields an example of a ``deformation to the normal cone'' whose central fiber is not actually isomorphic to the normal cone of the compactifying divisor (but is still normalized by it); cf.~Example \ref{r:chili2}.
\end{remark}

\subsection{Step 2}

We now establish Step 2 of the outline at the beginning of this section. Step 1 yields a sequence of projective compactifications $\overline{V}_i$ of our negative weight deformation $V$ of the Calabi- Yau cone $C$. For $i\gg 0$ each of these can be used to prove Theorem \ref{existence}(1). This is the content of our next theorem. The proof actually yields a more technical but stronger statement (see Remark \ref{rk:strongerprop}), which will be needed later on to complete Step 4 of the outline.

\begin{theorem}\label{theo:properties}
The affine variety $V$ is Gorenstein with at worst finite singular set and $K_V$ is trivial. Complements of suitable compact subsets of $C$ and of $V$ are diffeomorphic to each other.
\end{theorem}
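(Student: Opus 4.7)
The plan is to build everything on top of the projective compactification $\overline{V}_i \supset V$ with admissible divisor $D_i$ constructed in Step~1 via Theorem~\ref{normalform}. Item~(3) of that theorem identifies the base change of $p_i: W_i \to \C$ under $t = s^{\mu_i}$, $\C^*$-equivariantly and away from compactifying divisors, with the deformation of $\overline{V}_i$ to the normal cone of $D_i$. The central fiber of the latter is $C$, realized as a negative line orbibundle $\mathcal{L}$ over $D_i$ with zero section contracted to $o$; every nonzero fiber is $\overline{V}_i \supset V$.

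First I would handle the three easier assertions. For the finite singular locus: the relative singular locus $S \subset W_i$ of $p_i$ is a $\C^*$-invariant closed subvariety meeting the central fiber only at $\{o\}$, since $C$ has an isolated singularity at the apex. Upper semi-continuity of fiber dimension forces $|S \cap V_t| < \infty$ for small $|t|$, and the $\C^*$-isomorphism $\sigma_i(z,\cdot): V_1 \to V_{z^{\mu_i}}$ propagates this to $V$. For the diffeomorphism of ends, I would apply Ehresmann's theorem in the smooth equivariant locus of the deformation to the normal cone: $\C^*$-rescaling of the base parameter $s$ contracts a fiber $V_s \cong V$ into arbitrarily small tubular neighborhoods of $D_i$ in $\overline{V}_i$, whose analogues on the central fiber side are complements of arbitrarily large compact sets in $C$. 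The Gorenstein property of $V$ follows from Gorenstein of $C$, flatness of $p_i$ over the regular base $\C$, and once more the $\C^*$-isomorphism between nonzero fibers.

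The main obstacle will be triviality of $K_V$. My plan is to prove the stronger statement that $K_{\overline{V}_i}$ is $\Q$-linearly equivalent to a rational multiple of $[D_i]$ as orbifold $\Q$-line bundles in a neighborhood of $D_i$; restricting to $V = \overline{V}_i \setminus D_i$ then gives $K_V$ trivial as a plain line bundle. For this, I would invoke the Type~I deformation theory of Sasaki structures (Appendix~\ref{s:type1}) to endow $C$ with a compatible K\"ahler cone structure whose Reeb vector field is the quasi-regular $\xi_i$, so that $D_i$ inherits a compact K\"ahler orbifold structure and $C \setminus \{o\}$ becomes the total space of the $\C^*$-orbibundle associated to $\mathcal{L}$. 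The orbifold Gysin sequence (Appendix~\ref{s:gysin}) for the $S^1$-orbibundle link $\to D_i$, combined with the Calabi-Yau condition pinning down the weight of $\Omega_0$ under $\xi$, would then identify $c_1(K_{D_i})$ as a specific rational multiple of $c_1(\mathcal{L})$. Since $\mathcal{L}$ also serves as the normal orbibundle of $D_i$ in $\overline{V}_i$ (by the deformation-to-normal-cone description), adjunction converts this cohomological identity into the required $\Q$-linear equivalence $K_{\overline{V}_i} \equiv k\,[D_i]$ near $D_i$, completing the argument.
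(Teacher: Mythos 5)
Your treatment of the three easier assertions is essentially sound and close in spirit to the paper's: the finite singular set and the diffeomorphism of ends both follow from the structure of $\overline{V}_i$ near the admissible divisor $D_i$ (the paper reads them off from the orbifold tubular neighborhood rather than from Ehresmann applied to the family, but either works), and the Gorenstein property is indeed deduced from the given degeneration of $V$ to the Gorenstein cone $C$ (the paper cites \cite[Thm 9.1.6]{Ishii} for this). Your identification of $K_{D_i}$ with a rational multiple of the normal orbibundle via Type I deformations and the orbifold Gysin theorem is also exactly the paper's leading-order computation on the normal cone (one small caveat: to pass from the $C^\infty$ identity of orbibundles produced by Theorem \ref{th:gysin} to a holomorphic one you need the discreteness of the orbifold Picard group of $D_i$, which the paper gets from $D_i$ being Fano).

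The genuine gap is in the final step for the triviality of $K_V$. You establish that $K_{\overline{V}_i}+\alpha[D_i]$ is trivial \emph{in a neighborhood of $D_i$} (equivalently, on the normal cone), and then claim that ``restricting to $V=\overline{V}_i\setminus D_i$'' gives $K_V$ trivial. But $V$ is the complement of $D_i$ in all of $\overline{V}_i$, not the complement of $D_i$ in a tubular neighborhood; your argument only yields triviality of $K_V$ near infinity. To conclude you would need the relation $K_{\overline{V}_i}+\alpha[D_i]\cong\mathcal{O}$ to hold \emph{globally} on $\overline{V}_i$, and this globalization is the hardest part of the paper's proof. It proceeds in three stages: first the trivializing section over $D_i$ is extended to every infinitesimal neighborhood $mD_i$ using Kodaira vanishing for the orbibundles $-mN$ (this is where $\dim D_i\geq 2$ and positivity of $N$ enter); then to a genuine pseudoconcave tubular neighborhood via the extension theorem of \cite[p.379, Thm III]{Griffiths}; and finally to all of $\overline{V}_i$ via the Grothendieck--Lefschetz theorem for normal projective varieties \cite{Srinivas} applied to a generic hyperplane section of the ample $\Q$-divisor $[D_i]$. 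None of these steps is automatic, and without them the asserted triviality of $K_V$ on all of $V$ does not follow.
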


\begin{proof}
We treat the surface case separately, using an argument due to Kronheimer \cite[(2.5)]{Kronheimer2} that will also give us Kronheimer's classification of hyper-K\"ahler ALE $4$-manifolds once Theorem \ref{classification} has been proved. As the details of this argument will be explained in a broader context in Section \ref{ss:def-thy}, we will be brief here. If $\dim C = 2$, then $C = \C^2/\Gamma$ for some finite group $\Gamma \subset {\rm SU}(2)$ acting freely on $S^3$. In particular, $C$ is quasi-regular, i.e., the Reeb torus of $C$ is a circle, so the above sequence $W_i$ collapses to a single element $W$. It is a classical result going back to Klein that $C$ can be embedded into $\C^3$ as a quasi-homogeneous surface. Thanks to Slodowy's construction \cite{Slodowy} of a $\C^*$-equivariant semi-universal deformation of any quasi-homogeneous complete intersection, we can now conclude that $V$ itself embeds into $\C^3$ as a perturbation of $C$ by polynomials of lower weighted degree. Here we crucially rely on the presence of a $\C^*$-action in order to globalize the classifying map from $W$ to the semi-universal family of $C$. This classification makes the claimed properties of $V$ checkable.

We now give a more abstract argument that covers all cases where $\dim C \geq 3$.

Proposition \ref{normalform}(2) tells us that $\overline{V}_i$ is an orbifold in a neighborhood of the compactifying divisor $D_i$ such that all of the singularities of $\overline{V}_i$ in this neighborhood are contained in $D_i$. In particular, this neighborhood is orbifold diffeomorphic (fixing the zero section) to a neighborhood of the zero section in the orbifold normal bundle to $D_i$ in $\overline{V}_i$, i.e., in the compactified cone $C \cup D_i$. Also, the singular set of the affine variety $V$ is then clearly compact and hence finite. The fact that the singular points are Gorenstein follows from \cite[Thm 9.1.6]{Ishii}, using the given deformation of $V$ to $C$.

Since $V$ is Gorenstein, the canonical divisor class of $V$ defines a line bundle $K_V$. Similarly, since $\overline{V}_i = V \cup D_i$ is an orbifold in a tubular neighborhood of $D_i$, the canonical bundle $K_{\overline{V}_i}$ makes sense as a line orbibundle near $D_i$ and as a $\Q$-line bundle globally. We will prove that for all $i \gg 0$ there exists an integer $\alpha_i$ (in fact $\alpha_i > 1$) such that $K_{\overline{V}_i} + \alpha_i[D_i]$ is an honest line bundle on $\overline{V}_i$, and is actually trivial as a line bundle. This clearly suffices to prove the theorem.

We first note that $D_i$ must be a Fano orbifold for all $i \gg 0$. This is because the theory of Type I deformations of Sasaki structures (see Theorem \ref{p:typeIest}) provides us with a sequence of cone metrics $g_{0,i}$ on $C$ that are K\"ahler with respect to the given complex structure, with Reeb vector field $\xi_i$, such that $g_{0,i}$ converges to the Ricci-flat K\"ahler cone metric $g_0$ locally smoothly as $i \to \infty$. It follows that the link of $g_{0,i}$ has strictly positive Ricci curvature for $i \gg 0$, and hence that $D_i$ is Fano.

Fix any large enough index $i$ such that $D_i$ is Fano, and suppress the subscript $i$.

Our next step is to prove that the desired result is true to leading order around $D$, i.e., that an analogous statement holds on the normal cone to $D$ in $\overline{V}$. More precisely, let $p: N \to D$ denote the total space of the normal line orbibundle to $D$ in $\overline{V}$. We wish to prove that there exists an integer $\alpha > 1$ such that $K_N + \alpha[D]$ is an honest line bundle on $N$, and is actually trivial as a line bundle. To this end, we first observe that $K_N$ is trivial away from the zero section of $N$ because the affine cone $(N^{-1})^{\times}$ is exactly the given Calabi-Yau cone $C$. On the other hand, one easily shows by using transition functions that
$K_{N}=p^{*}(K_{D} - N)$, where $p^*$ is understood in the sense of \cite[Rmk 4.30]{faulk}. Thus, applying Theorem \ref{th:gysin} and switching to additive notation, we see that there exists an $\alpha \in \Z$ such that $K_D - N = -\alpha N$ as $C^\infty$ line orbibundles on $D$. Notice that $\alpha > 1$ because $D$ is Fano and $N$ is positive. Again because $D$ is Fano, the orbifold Picard group of $D$ is discrete, so it follows that $K_D - N = -\alpha N$ as holomorphic line orbibundles. Since $p^*N = [D]$, we learn that the holomorphic line orbibundle $K_N +  \alpha[D]$ is trivial, hence in particular that it is an honest line bundle.

We now consider the $\Q$-line bundle $Q=K_{\overline{V}} + \alpha[D]$ on $\overline{V}$. By construction, this is an honest line bundle away from $D$ and a line orbibundle in a tubular neighborhood of $D$. By the previous step, $Q|_D$ is trivial. In particular, $Q|_D$ is an honest line bundle on $D$. Moreover, since $Q$ and $K_N + \alpha[D]$ are isomorphic as $C^\infty$ line orbibundles in a tubular neighborhood of $D$, it follows that $Q$ is a true line bundle in this neighborhood, hence globally on $\overline{V}$. Pick a trivializing section $s \in H^0(D,Q|_D)$. We will now prove the following sequence of statements, where (3) completes the proof.

(1) \emph{The section $s$ extends to every infinitesimal neighborhood $mD$ $(m \in \N$, $m \geq 2)$ of $D$ in $\overline{V}$.}

\emph{Proof}. Taking cohomology in the sequence
$$0\to\mathcal{O}_{D}(Q)\otimes\mathcal{J}_D^m|_D\to\mathcal{O}_{mD}(Q)\to\mathcal{O}_{(m-1)D}(Q)\to 0,$$
whose exactness can be checked on stalks by averaging over the uniformizing groups, we get
\begin{equation*}
H^{0}(\mathcal{O}_{mD}(Q))\to H^{0}(\mathcal{O}_{(m-1)D}(Q))\to
H^{1}(\mathcal{O}_{D}(Q)\otimes\mathcal{J}_D^m|_D).
\end{equation*}
A section of $Q|_{(m-1)D}$ therefore lifts to a section of $Q|_{mD}$ if
$H^{1}(\mathcal{O}_{D}(Q)\otimes\mathcal{J}_D^m|_D)=0$. Since $Q|_{D}$ is trivial, this is equivalent to $H^{1}(\mathcal{J}_D^m|_D)=0$. As in the manifold case, the sheaf $\mathcal{J}_D^m|_D$ is naturally  isomorphic to the sheaf of sections of the line orbibundle $-mN$. The desired vanishing of $H^1$ then follows from the Kodaira vanishing theorem for orbibundles because $N$ is positive and $\dim D\geq 2$. By induction, $s \in H^0(D, Q|_D)$ lifts to a section of $Q|_{mD}$ for all $m\geq2$.

(2) \emph{The section $s$ extends to a small pseudoconcave tubular neighborhood $U$ of $D$ in $\overline{V}$.}

\emph{Proof}. For this we take cohomology in the exact sequence
$$0\to\mathcal{O}_{U}(Q) \otimes \mathcal{J}_D^m \to\mathcal{O}_{U}(Q)\to\mathcal{O}_{mD}(Q)\to0,$$
which results in the exact sequence
$$H^{0}(\mathcal{O}_{U}(Q))\to H^{0}(\mathcal{O}_{mD}(Q))\to H^{1}(\mathcal{O}_{U}(Q) \otimes \mathcal{J}_D^m).$$
If $D$ is a smooth divisor with $\dim D \geq 2$ and with positive normal bundle, then the $H^1$ term vanishes for $m \gg 1$ by \cite[p.379, Thm III]{Griffiths}. This is proved using the identification of $\mathcal{J}_D^m$ with the sheaf of sections of $-m[D]$, the theory of $\overline{\partial}$-harmonic forms on manifolds with boundary, and the Bochner technique. All of these go through for admissible orbifold divisors. Together with the fact proved in item (1) above that $s$ extends to $mD$ for all $m \geq 2$, this then tells us that $s$ extends to $U$.

(3) \emph{The line bundle $Q$ is globally trivial on $\overline{V}$.}

\emph{Proof}. Thanks to item (2), $Q$ is trivial in some tubular neighborhood $U$ of $D$ in $\overline{V}$, after shrinking $U$ to ensure that $s$ does not vanish on $U$. Choose $k\in \N$ sufficiently large and divisible such that $k[D]$ is a very ample Cartier (rather than $\Q$-Cartier) divisor on $\overline{V}$. We may then perturb the supporting Weil divisor $k D$ in its linear system, replacing it with a generic smooth hyperplane section $H \subset U$. Then $Q|_H$ will be trivial because $Q|_U$ is. Because $\dim H \geq 2$, the Grothendieck-Lefschetz theorem for normal projective varieties \cite[Thm 1]{Srinivas} now tells us that $Q$ is globally trivial.
\end{proof}

\begin{remark}\label{rk:strongerprop}
We note for later reference that the above proof shows that $V$ is Gorenstein, $\overline{V}_i$ is $\Q$-Gorenstein for all $i$, $D_i$ is Fano for all $i \gg 0$, and if $D_i$ is Fano then there exists an integer $\alpha_i > 1$ such that the $\Q$-line bundle $K_{\overline{V}_i} + \alpha_i [D_i]$ is an honest line bundle on $\overline{V}_i$, and is actually trivial.
\end{remark}

\subsection{Step 3}\label{ss:three}

By Theorem \ref{chili}, the orbidivisor $D_i$ is $(k_i\mu_i - 1)$-comfortably embedded in $\overline{V}_i$. (This result is valid only for $n \geq 3$, but for $n = 2$ there is no need to work with approximating sequences, so Theorem \ref{chili} is not needed either.) Thus, by \cite[\S 3.1]{ChiLi} (see also \cite[App B.1]{Conlon3} and \cite[Prop 4.5]{helb}), there exists a diffeomorphism $\Phi_i$ of neighborhoods of infinity in $C$ and $V$ such that $\Phi_i^*J$ converges to $J_0$ at rate $-k_i\mu_i$ with respect to any $J_0$-K\"ahler cone metric on $C$ whose scaling vector field generates the effective $\C^*$-action by which we have quotiented $C$ to obtain $D_i$. We would now like to rewrite this information in terms of $\xi_i$. Recall that $c_i$ was defined as the unique positive real number such that $-J(c_i\xi_i)$ generates the $\C^*$-action we just mentioned, and $-k_i\mu_i/c_i = \lambda_i$ also holds by definition. Now, the metric growth rate of a section of $T^*C \otimes TC$ with respect to any cone metric is equal to the section's homogeneity with respect to the scaling vector field of the cone metric. Thus, if $g_{0,i}$ is a $J_0$-K\"ahler cone metric on $C$ with Reeb vector field $\xi_i$, and if $r_i$ denotes the radius function of this cone metric, then $|\nabla_{g_{0,i}}^k(\Phi_i^*J - J_0)|_{g_{0,i}} = O_{k,i}(r_i^{\lambda_i-k})$ as $r_i \to \infty$ for all $k \in \N_0$. Such metrics $g_{0,i}$ exist by Theorem \ref{p:typeIest}. In addition, by \eqref{e:saviors}, we can assume that for all $K \in \N$ and $\epsilon > 0$, $g_{0,i}$ converges to $g_0$ in the weighted $C^{K}_\epsilon$ norm associated with $g_0$ as $i \to \infty$. Thus, for all $i \geq i(K,\epsilon)$, we have that $|\nabla^k_{g_0}(\Phi_i^*J - J_0)|_{g_0} = O_{K,\epsilon,i}(r^{\lambda+\epsilon-k})$ as $r \to \infty$ for all $k \in \{0,\ldots,K\}$. This will be good enough for us even though $\Phi_i$ and the constants in the $O_{K,\epsilon,i}$ notation diverge as $i \to \infty$.

\subsection{Step 4}

Given a K\"ahler crepant resolution $\pi: M \to V$, our goal in this step is to find K\"ahler forms with good asymptotics in as many classes $\mathfrak{k} \in H^2(M,\R)$ as possible. We will explain how to do this after proving the following technical theorem.

\begin{theorem}\label{wh}
Let $V$ be an affine variety with at worst isolated singularities. Let $\overline{V}$ be a projective compactification of $V$ such that $D = \overline{V}\setminus V$ is an ample admissible divisor in $\overline{V}$. Let $M$ be a complex manifold together with a proper surjective holomorphic map $\pi: M \to V$ which is a biholomorphism away from compact sets. Construct a compact complex orbifold $X = M \cup D$ by using $\pi$ to identify neighborhoods of infinity in $V$ and in $M$. Then the following hold.

{\rm (1)} The orbifold de Rham cohomology of $X$ satisfies Hodge decomposition and Hodge symmetry.

{\rm (2)} If $V$ is Gorenstein and if $\pi$ is crepant, then the singularities of $V$ are canonical.

{\rm (3)} If in addition there exists an $\alpha \in \N$ such that the $\Q$-line bundle $K_{\overline{V}} + \alpha[D]$ is an honest line bundle on $\overline{V}$, and is actually trivial as a line bundle, then $h^{0,i}(X) = 0$ for all $i > 0$.

{\rm (4)} If in addition $\alpha > 1$,  then every smooth closed real $2$-form on $M$ is de Rham cohomologous to the restriction to $M$ of a smooth closed real $(1,1)$-form on $X$.

{\rm (5)} Assume in addition that $M$ admits a smooth closed real $(1,1)$-form $\beta$ whose restriction to some open neighborhood $U$ of ${\rm Exc}(\pi)$ is $i\partial\overline{\partial}$-cohomologous to a K\"ahler form on $U$. Then $X$ is projective, and any such $\beta$ is $i\partial\overline{\partial}$-cohomologous to the restriction to $M$ of a smooth K\"ahler form on $X$.
\end{theorem}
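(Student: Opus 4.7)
For (1)--(3) I would follow a standard Hodge/vanishing strategy. For (1), take a proper bimeromorphic morphism $Y \to X$ from a smooth projective orbifold that factors through a smooth resolution of $\overline V$; such a $Y$ exists because the indeterminacy locus of the bimeromorphic identification $X \dashrightarrow \overline V$ (identity on $D$, inverse of $\pi$ on $M$) lies in compact sets and $\overline V$ is projective. Then $Y$ is K\"ahler, the pullback $H^*(X, \C) \to H^*(Y, \C)$ is injective and bidegree-preserving, so Hodge decomposition and Hodge symmetry descend from $Y$ to the orbifold de Rham cohomology of $X$. For (2), crepancy ($K_M = \pi^*K_V$ as genuine line bundles, using $V$ Gorenstein) combined with Grauert--Riemenschneider $R^i\pi_*\omega_M = 0$ for $i>0$ yields $R^i\pi_*\mathcal{O}_M = 0$ for $i > 0$, so $V$ has rational singularities; Elkik's theorem (Gorenstein rational = canonical) gives (2). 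For (3), extend $\pi$ to a birational morphism $\tilde\pi: X \to \overline V$ equal to the identity on $D$; (2) applied to $\tilde\pi$ gives $R^i\tilde\pi_*\mathcal{O}_X = 0$ for $i > 0$, so by Leray $H^i(X,\mathcal{O}_X) \cong H^i(\overline V, \mathcal{O}_{\overline V})$. Orbifold Serre duality on $\overline V$ together with $K_{\overline V} \cong \mathcal{O}_{\overline V}(-\alpha D)$ (from the hypothesis) identifies this with $H^{n-i}(\overline V, \mathcal{O}(-\alpha D))^*$, which vanishes for $i > 0$ by Kodaira vanishing for the ample $\Q$-line orbibundle $\alpha [D]$ on the projective orbifold $\overline V$.

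For (4), combining (1) and (3) gives $H^{2,0}(X) = H^{0,2}(X) = 0$, so $H^2(X,\R) = H^{1,1}(X,\R)$ and every real $2$-class on $X$ admits a closed real $(1,1)$-representative. It remains to show that the restriction $H^2(X,\R) \to H^2(M,\R)$ is surjective. In the long exact sequence of the pair $(X,M)$ the obstruction sits in $H^3(X,M;\R)$, and by excision plus the orbifold Thom isomorphism for $[D]$ from Appendix \ref{s:gysin}, this equals $H^1(D,\R)$. Now $K_X \cong -\alpha[D]$ on $X$ (because $K_M = \pi^*K_V$ is trivial by $K_{\overline V} \cong -\alpha[D]$ and $D \cap V = \emptyset$, while on a neighborhood of $D$ inside $X$ the bundle $K_X$ agrees with $K_{\overline V}$), so adjunction gives $-K_D \cong (\alpha - 1)[D]|_D$. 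Since $\alpha > 1$ and $[D]|_D$ is ample, $D$ is a Fano K\"ahler orbifold, and Hodge decomposition, Hodge symmetry, and Kodaira vanishing on $D$ give $h^{0,1}(D) = h^{1,0}(D) = 0$, so $H^1(D,\R) = 0$, yielding the required surjectivity.

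For (5), use (4) to fix a closed real $(1,1)$-form $\eta$ on $X$ with $[\eta|_M] = [\beta]$ in $H^2(M,\R)$. Since $X$ lies in the Fujiki class $\mathcal C$ (being bimeromorphic to the K\"ahler $\overline V$), the orbifold $\partial\overline\partial$-lemma holds on $X$, and a gluing argument using cutoff functions together with local Dolbeault potentials near $D$ upgrades $\eta|_M - \beta = d\gamma$ to $\eta|_M - \beta = i\partial\overline\partial\psi$ for some $\psi \in C^\infty(M,\R)$. Equip the ample $\Q$-line bundle $[D]$ on $\overline V$ with a positive Hermitian metric; its curvature $\omega_D$, transferred to $X$, is a K\"ahler form in a tubular neighborhood of $D$ with $[\omega_D|_M] = 0$. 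For $t \gg 0$ the form $\eta + t\omega_D$ is K\"ahler near $D$, while the hypothesis that $\beta|_U$ is $i\partial\overline\partial$-cohomologous to a K\"ahler form on $U$ produces K\"ahlerness near $\mathrm{Exc}(\pi)$ after a further $i\partial\overline\partial$-correction on $M$. The main obstacle is precisely this gluing: coordinating the $i\partial\overline\partial$-corrections near $\mathrm{Exc}(\pi)$ with those near $D$ across the compact annular region between them, while preserving both cohomological consistency and strict positivity, demands careful control of the negative contributions of cutoff functions against the positivity budget supplied by $t\omega_D$ and by $\beta|_U$. Once this is achieved by the standard AC cutoff and $i\partial\overline\partial$-gluing technology (cf.\ \cite{Conlon, JoyceALE}), we obtain a K\"ahler $\omega$ on $X$ with $\omega|_M - \beta \in i\partial\overline\partial C^\infty(M,\R)$; projectivity then follows by rationally perturbing $[\omega] \in H^{1,1}(X,\R)$ via the rational class $c_1([D])$ to produce a Hodge class, and applying Baily's orbifold Kodaira embedding.
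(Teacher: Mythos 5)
Your overall architecture matches the paper's for items (1), (2) and (4): a projective modification of $X$ supported over $V^{\rm sing}$ (the paper gets this from Hironaka--Rossi \cite{HiRossi} and runs the Moishezon argument of \cite[Thm 2.2.18]{Ma1} with orbifold forms), discrepancies computed through the algebraic factorization for (2), and for (4) the identification of the obstruction to surjectivity of $H^2(X,\R)\to H^2(M,\R)$ with $H^1(D,\R)=0$ for the Fano orbifold $D$. Two points deserve comment. In (3) you invoke ``orbifold Serre duality and Kodaira vanishing on the projective orbifold $\overline V$,'' but $\overline V$ is \emph{not} an orbifold: it is an orbifold only near $D$, and away from $D$ it has normal singularities that are merely canonical by (2). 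Your conclusion is salvageable (canonical Gorenstein plus quotient singularities are Cohen--Macaulay and klt, so one can quote Kawamata--Viehweg vanishing $H^i(\overline V, K_{\overline V}+\alpha[D])=0$ directly, with no Serre duality needed), but as written the justification rests on a false description of $\overline V$. The paper avoids this entirely by transporting the computation to the genuinely projective orbifold $\hat X$ via two applications of the Leray spectral sequence and a Riemenschneider-type vanishing theorem \cite{Riemen2} for semipositive orbibundles that are positive somewhere.

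In (5) you correctly identify the gluing across the region between ${\rm Exc}(\pi)$ and $D$ as the crux, but then defer it to ``standard AC cutoff and $i\partial\overline\partial$-gluing technology,'' which is precisely the step a proof must supply. The paper's resolution shows there is no positivity budget to balance: the pullback $\gamma$ of the Fubini--Study form under $X\to\overline V\to\P^N$ (given by $|mD|$) is globally defined, semipositive, strictly positive on \emph{all} of $X\setminus{\rm Exc}(\pi)$ (not just near $D$), and $i\partial\overline\partial$-exact on $M$; hence $\xi+i\partial\overline\partial(\chi(u+\varphi))+C\gamma$ is K\"ahler for $C\gg1$ once the cutoff $\chi$ has $d\chi$ supported in $U\setminus{\rm Exc}(\pi)$, where $\gamma>0$. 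Your $\omega_D$ would play this role if you recognized it as globally semipositive rather than ``K\"ahler in a tubular neighborhood of $D$.'' Relatedly, your passage from $\eta|_M-\beta=d\gamma$ to $i\partial\overline\partial\psi$ via the Fujiki-class $\partial\overline\partial$-lemma on the compact $X$ does not apply to a form living on the open manifold $M$; the paper instead uses that $M$ is $1$-convex with $K_M$ trivial, so $H^1(M,\mathcal O_M)=0$ by \cite{GRie}, and the usual $i\partial\overline\partial$-lemma argument applies on $M$.
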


\begin{proof}
(1) Abuse notation by denoting the obvious extension of $\pi$ from $M$ to $X$ by $\pi$ as well.

We  first show that the blow-up $\sigma: \hat{X} \to X$ of $X$ along a suitable ideal sheaf supported in $X \setminus D$ produces a projective orbifold $\hat{X}$. This follows from \cite[p.321, Thm 1]{HiRossi}. More precisely, by applying the proof of this theorem to a small neighborhood of every singularity of $V$, we obtain a coherent analytic ideal sheaf $\mathcal{J}$ on $V$ with support on $V^{\rm sing}$ such that the blow-up $f: \hat{X} \to \overline{V}$ of $\overline{V}$ along $\mathcal{J}$, which is projective by GAGA, factors as $f = \pi \circ \sigma$, where $\sigma: \hat{X} \to X$ is then the desired map.

If $X$ is smooth, the modification $\sigma: \hat{X} \to X$ can be used to prove that the de Rham cohomology of $X$ satisfies Hodge decomposition and Hodge symmetry \cite[Thm 2.2.18]{Ma1}. The same proof goes through in the general orbifold case if one uses orbifold differential forms, thanks to the fact that $\sigma$ is an orbifold diffeomorphism near the orbifold singularities of $\hat{X}$ and $X$. Moreover, this proof also shows that the pullback map $\sigma^*$ in orbifold de Rham or Dolbeault cohomology is injective.

(2) By assumption, $\pi$ is a crepant resolution of the singularities of $V$. If $\pi$ is algebraic, it follows by definition that $V$ has canonical singularities. In general, we just need to consider the algebraic map $f = \pi \circ \sigma$ instead of $\pi$, whose discrepancies over $V^{\rm sing}$ are nonnegative because $M$ is smooth.

(3) Thanks to the injectivity of $\sigma^*$, it suffices to prove that $H^i(\hat{X}, \mathcal{O}_{\hat{X}}) = 0$ for $i > 0$. Note that the direct image functor $R^q f_*$ on coherent sheaves can be computed locally in the analytic topology. This implies that $R^q f_*\mathcal{O}_{\hat{X}} = 0$ for $q > 0$ because $\overline{V}$ has canonical singularities away from $D$ by item (2) above, which are rational by \cite[Thm 5.22]{KM}, and because $f$ is a biholomorphism onto its image in a neighborhood of $D$. Thus, $H^i(\hat{X}, \mathcal{O}_{\hat{X}}) = H^i(\overline{V}, \mathcal{O}_{\overline{V}})$ by the Leray spectral sequence. To prove that $H^i(\overline{V}, \mathcal{O}_{\overline{V}}) = 0$ for $i > 0$, we begin with the following claim; cf.~\cite[p.526]{Conlon3}.\medskip

\noindent \emph{Claim 1.} If a holomorphic line orbibundle $Q$ on $\hat{X}$ admits a Hermitian metric whose curvature form is nonnegative on $\hat{X}$, and is strictly positive somewhere, then $H^i(\hat{X}, K_{\hat{X}} + Q) = 0$ for all $i > 0$.\medskip

\noindent \emph{Proof of Claim 1.} Since $\hat{X}$ is projective, hence K\"ahler, this follows from the usual Bochner formula proof of the Kodaira vanishing theorem together with the real-analyticity of harmonic forms. This argument is due to \cite[Thm 6]{Riemen2} in the smooth case and it clearly works for orbifolds as well. \hfill$\Box$\medskip

To proceed, write $\hat{D}$ for the preimage of $D$ in $\hat{X}$. Note that $K_{\hat{X}} + \alpha[\hat{D}]$ is an $f$-exceptional divisor on $\hat{X}$ because $K_{\overline{V}} + \alpha[D] = 0$ on $\overline{V}$. This implies that $f_*(K_{\hat{X}} + \alpha[\hat{D}]) = 0$ and hence
$$H^i(\overline{V}, \mathcal{O}_{\overline{V}}) = H^i(\overline{V}, f_*(K_{\hat{X}} + \alpha[\hat{D}])).$$
We can further rewrite this as a cohomology group on $\hat{X}$ by using the Leray spectral sequence one more time. To this end, we require the following claim.\medskip

\noindent \emph{Claim 2.} We have that $R^q f_*(K_{\hat{X}} + \alpha[\hat{D}]) = 0$ for all $q > 0$. \medskip

\noindent \emph{Proof of Claim 2.} By \cite[Prop 2.69]{KM}, it suffices to prove that $H^q(\hat{X}, K_{\hat{X}} + \alpha[\hat{D}] + f^*H) = 0$ for all $q > 0$ and all sufficiently ample line bundles $H$ on $\overline{V}$. This vanishing follows from Claim 1. Indeed, $D$ is an ample $\Q$-Cartier divisor on $\overline{V}$. Thus, for $m \in \N$ sufficiently large and divisible, the map
$$\hat{X} \stackrel{f}{\longrightarrow} \overline{V} \stackrel{|mD|}{\longrightarrow} \mathbb{P}^N$$
is everywhere defined, and is an isomorphism onto its image on $\hat{X}\setminus {\rm Exc}(f)$. Then the pullback of the Fubini-Study Hermitian metric on $\mathcal{O}_{\P^N}(1)$ under this map is a metric of nonnegative curvature on $m[\hat{D}]$ with strictly positive curvature on $\hat{X}\setminus {\rm Exc}(f)$. Taking the $m$-th root, we obtain a metric with the same properties on $[\hat{D}]$, and this metric can be used in Claim 1.\hfill$\Box$  \medskip\

Thanks to Claim 2, the Leray spectral sequence now tells us that
$$H^i(\overline{V}, f_*(K_{\hat{X}} + \alpha[\hat{D}]) ) = H^i(\hat{X}, K_{\hat{X}} + \alpha[\hat{D}]).$$
This vanishes again due to Claim 1 and the fact that $[\hat{D}]$ admits a suitable Hermitian metric.

(4) We need to show that the restriction map $H^{1,1}(X,\R) \to H^2(X\setminus D, \R)$ is surjective. This will be similar to the proof of \cite[Prop 2.5]{Conlon3}. By items (1) and (3), $H^{1,1}(X,\R) = H^{2}(X,\R)$. Moreover, since $\alpha>1$, $D$ is Fano by adjunction. Thus, by the orbifold Calabi-Yau theorem,
$D$ admits K\"ahler metrics of positive Ricci curvature, so $H^{1}(D,\R)=0$ by the usual Bochner argument and orbifold Hodge theory. Now recall the Gysin exact sequence
\begin{equation*}
\cdots \to H^{k-2}(D,\R) \to H^{k}(X,\R)\stackrel{i^*}{\to} H^{k}(X\setminus D,\R)\to
H^{k-1}(D,\R)\to\cdots
\end{equation*}
in orbifold de Rham cohomology \cite[Prop B.4]{Conlon3}, where $i:X\setminus D \to X$ is the inclusion. Combining these facts, surjectivity of the restriction map $H^{1,1}(X,\R)\to H^{2}(X\setminus D,\R)$ is now obvious.

(5) Let $\beta$ be a smooth closed real $(1,1)$-form on $M$ such that for some open neighborhood $U$ of ${\rm Exc}(\pi)$ we have that $\beta|_U = \omega - i\partial\overline{\partial}\varphi$, where $\omega$ is a K\"ahler form on $U$ and $\varphi: U \to \R$ is a smooth function. Thanks to item (4) above, there exist a closed $(1,1)$-form $\xi$ on $X$ and a $1$-form $\eta$ on $M$ such that $\beta =\xi|_M+d\eta$. Note that $M$ is strictly pseudoconvex at infinity because the compactifying divisor $D = X \setminus M$ has positive normal bundle. Moreover, $K_M$ is trivial because $K_V$ is trivial and $\pi: M \to V$ is crepant. Thus, $H^{1}(M,\mathcal{O}_{M})=0$ by \cite[p.278, Korollar]{GRie}. Since $d\eta$ is of type $(1,1)$, the usual proof of the $i\partial\overline{\partial}$-lemma then tells us that $d\eta=i\partial\overline{\partial}u$ for some function $u$ on $M$.
Now let $\gamma$ denote the pullback of the Fubini-Study K\"ahler form on $\P^N$ under the map
$$X \stackrel{\pi}{\longrightarrow} \overline{V} \stackrel{|mD|}{\longrightarrow} \P^N$$
for any sufficiently large and divisible $m \in \N$. Then, as in the proof of Claim 2 above, $\gamma$ is a smooth nonnegative closed $(1,1)$-form on $X$, strictly positive on $X \setminus {\rm Exc}(\pi)$ and $i\partial\overline{\partial}$-exact on $M = X \setminus D$. Let $\chi$ be a cut-off function with compact support on $M$ such that the support of $d\chi$ is contained in $U \setminus {\rm Exc}(\pi)$. Then, clearly, for $C \gg 1$ sufficiently large,
$$\xi+i\partial\overline{\partial}(\chi(u+\varphi)) + C \gamma$$ defines a K\"ahler form on $X$ whose restriction to $M$ is $i\partial\overline{\partial}$-cohomologous to $\xi$, hence to $\beta$. This in particular tells us that $X$ is a K\"ahler orbifold. Thus, using the vanishing $h^{0,2}(X) = 0$ from item (3), $X$ is projective algebraic by the Kodaira-Baily embedding theorem \cite{Bai57}.
\end{proof}

\begin{remark}
If $X$ is actually a manifold, then several steps of the above proof can be simplified. For example, it is then not necessary to ensure that $\sigma$ is an isomorphism near $D$, which allows us to quote a standard property of Moishezon manifolds \cite[Thm 2.2.16]{Ma1} instead of \cite{HiRossi}. Moreover, the vanishing $h^{0,i}(X) = 0$ follows from \cite[Satz 2.1]{GRie} because all sheaves in sight are locally free.
\end{remark}

We are now ready to complete Step 4.  If $i \in \N$ is large enough, then by Remark \ref{rk:strongerprop} the projective compactification $\overline{V}_i = V \cup D_i$ constructed in Step 2 satisfies all of the hypotheses of Theorem \ref{wh}. Thus, if $\pi: M \to V$ is a K\"ahler crepant resolution, then $M$ is quasi-projective by Theorem \ref{wh}(5), and every class $\mathfrak{k} \in H^2(M,\R)$ contains a closed $(1,1)$-form $\beta$ by Theorem \ref{wh}(4). Thanks to these two properties, \cite[Thm 1.1]{CollinsTosatti} now tells us that if $\mathfrak{k}$ pairs positively with all irreducible subvarieties of ${\rm Exc}(\pi)$ in the sense of Theorem \ref{existence}(3), then there exist an open neighborhood $U$ of ${\rm Exc}(\pi)$ in $M$  and a smooth function $\varphi: U \to \R$ such that $\beta|_U + i\partial\overline{\partial}\varphi > 0$. Again by Theorem \ref{wh}(5), there exists a smooth K\"ahler form $\omega_i$ on the projective compactification $X_i = M \cup D_i$ such that $\omega_i|_M \in \mathfrak{k}$.

We now return to the diffeomorphisms $\Phi_i$ constructed in Step 3. These are actually exponential-type maps \cite[Defn 4.5]{Conlon3} from some tubular neighborhood of the zero section in $N_{D_i/X_i}$ to a tubular neighborhood of $D_i$ in $X_i$ itself. Since $\omega_i$ is smooth on $X_i$, a scaling argument (compare the end of the proof of \cite[Prop 2.5]{Conlon3}) then shows that $|\nabla^k_{g_{0,i}}\Phi_i^*\omega_i|_{g_{0,i}}= O_{k,i}(r_i^{-2-k}) $ as $r_i \to \infty$ for all $k \in \N_0$. Note the following subtlety:~here, unlike in Step 3, we are discussing sections of $T^*C \otimes T^*C$ rather than $T^*C \otimes TC$, so the same quadratic decay rate holds with respect to \emph{any} $J_0$-K\"ahler cone metric on $C$ whose Reeb vector field is a positive scalar multiple of $\xi_i$. However, we can now conclude by the same approximation argument as in Step 3 that given any arbitrary $K \in \N$ and $\epsilon>0$, it holds for all $i \geq i(K,\epsilon)$ that $|\nabla^k_{g_0}\Phi_i^*\omega_i|_{g_{0}} = O_{K,\epsilon,i}(r^{-2+\epsilon-k})$ as $r \to \infty$ for all $k \in \{0,\ldots,K\}$.

\subsection{Step 5}\label{ss:five}

Let $\pi: M \to V$ be a K\"ahler crepant resolution. Let $\mathfrak{k} \in H^2(M,\R)$ be a class that pairs positively with all irreducible subvarieties of ${\rm Exc}(\pi)$. Given any arbitrary $K \in \N$ and $\epsilon>0$, we now fix a large enough index $i\geq i(K,\epsilon)$ in Steps 3--4. This provides us with a K\"ahler form $\omega_{K,\epsilon} \in \mathfrak{k}$ and a diffeomorphism $\Phi_{K,\epsilon}$ of neighborhoods of infinity in $C$ and in $M$ such that $$|\nabla^k_{g_0}(\Phi_{K,\epsilon}^*J - J_0)|_{g_0} = O_{K,\epsilon}(r^{\lambda+\epsilon-k}), \quad |\nabla^k_{g_0}(\Phi_{K,\epsilon}^*\omega_{K,\epsilon})|_{g_0} = O_{K,\epsilon}(r^{-2+\epsilon-k})$$ as $r \to \infty$ for all $k \in \{0,\ldots,K\}$. We are now finally in a position to invoke the existence theorem for AC Calabi-Yau metrics of \cite[Thm 2.4]{Conlon}. This was proved by solving a complex Monge-Amp\`ere equation on $M$ with respect to an AC reference metric constructed using $\omega_{K,\epsilon}$ in the class $\mathfrak{k}$.

A simplification compared to \cite{Conlon} is that, in the notation of \cite{Conlon}, we can assume $\omega = \xi$ on $M$ here instead of having to consider the case that $\omega - \xi = d\eta$ on $M \setminus K$. This eliminates the complicated $i\partial\overline{\partial}$-lemma of \cite[Cor A.3(ii)]{Conlon}. Thus, while the simpler $i\partial\overline{\partial}$-lemma of \cite[Cor A.3(i)]{Conlon} is still needed, this part of the existence theory now goes through in all dimensions, with no need for any special arguments in dimension $2$; see \cite[Rmk 2.5]{Conlon}. We thank F.~Rochon for these observations.

A small difficulty is that the decay conditions in \cite{Conlon} and in Definition \ref{ACCY} were phrased in terms of infinitely many derivatives. Here we only have a finite but arbitrarily large number $K$ of derivatives for a fixed AC diffeomorphism $\Phi_{K,\epsilon}$. This suffices to prove the existence of AC Calabi-Yau metrics of arbitrarily high but finite order with no changes to the proof. Assuming this, we then obtain the AC property to all orders by constructing a Bianchi gauge in weighted H\"older spaces on $C$.

\section{Theorem \ref{classification}}\label{s:classification}

Let $(M,g,J,\Omega)$ be a Calabi-Yau manifold of complex dimension $n \geq 2$ which is AC with respect to some diffeomorphism $\Phi$ identifying the asymptotic cone $C$ with $M$ at infinity. Assume that the decay rate of $J$ is $\lambda<0$. Write $g_{0},J_{0},\Omega_{0}$ for the Calabi-Yau structure and $\xi=J_0(r\partial_r)$ for the Reeb vector field of $C$. Let $\T$ denote the associated Reeb torus, i.e., the real torus acting holomorphically and isometrically on $(C,g_{0},J_0)$ generated by the flow of $\xi$. Then, by construction, we clearly have that $\xi\in\operatorname{Lie}(\T)$. Let $\xi_{i}$ be a sequence of vector fields in $\operatorname{Lie}(\T)$ such that some scalar multiple of $\xi_i$ generates a $\C^*$-action on $C$ and such that $\xi_i \to \xi$ as $i\to\infty$. For technical reasons, we assume that $\xi_i$ is chosen using Dirichlet's theorem on diophantine approximation for vectors. Thus, for some fixed choice of a norm on $\operatorname{Lie}(\T)$ we have that
\begin{equation}\label{eq:dirichlet}
|\xi_i - \xi| = O(c_i^{-1-\frac{1}{d}})\;\,\text{as}\;\,i\to\infty,
\end{equation}
where $d = \dim \T$ and where $c_{i}$ is the unique positive real number such that $c_{i}\xi_{i}$ generates an effective $\C^{*}$-action on $C$. (This step is not essential but will save us a fair amount of technical work later; see the proof of Claim 1.) By the Reeb field perturbation results of Appendix \ref{s:type1}, for all $i$ there exists a quasi-regular $J_0$-K\"ahler cone metric $g_{0,i}$ on $C$ whose Reeb vector field is exactly $\xi_{i}$. Furthermore, if $r_i$ denotes the radius function of $g_{0,i}$, then, by Theorem \ref{p:typeIest}, for all $K \in \N$ and $\epsilon > 0$ there exists an $i(K,\epsilon)\in\N$ such that for all $i \geq i(K,\epsilon)$ we have that
\begin{equation}\label{eq:cxasympt}
|\nabla^{k}_{g_{0,i}}(\Phi^{*}J-J_{0})|_{g_{0,i}}=O_{K,\epsilon}(r_{i}^{\lambda+\epsilon-k})\;\,\text{as $r_i \to \infty$}
\end{equation}
for all $k=0,\ldots,K$. Precisely, it suffices to chose $i(K,\epsilon)$ so that $|\xi_i - \xi|$ is less than $\epsilon$ times a small constant depending only on $K$ for $i \geq i(K,\epsilon)$. Here, whereas the metric $g_{0,i}$ depends on $i$, the map $\Phi$ and the constants implicit in the $O_{K,\epsilon}$ notation do not, unlike in the proof of Theorem \ref{existence}.

Let $D_{i}$ be the orbifold quotient of $C$ by the effective $\C^*$-action generated by $c_i\xi_i$. As in the proof of Theorem \ref{theo:properties}, $D_{i}$ is Fano for $i \gg 0$ because it admits a K\"ahler metric of positive Ricci curvature. Following the same proof, since $D_i$ is Fano and $K_C$ is trivial, we can show using Theorem \ref{th:gysin} that $C$ with the $\C^*$-action generated by $c_i\xi_i$ is equivariantly isomorphic to \begin{small}$\frac{1}{\alpha_{i}-1}$\end{small}$K_{D_{i}}$ with its zero section blown down for some integer $\alpha_i > 1$. Thus, $r_i^2 = h_i^{c_i}$ for some Hermitian metric $h_i$ on this bundle.

Thanks to \eqref{eq:cxasympt}, using \cite[Thm 1.6]{ChiLi} we can holomorphically compactify $M$ to obtain a compact complex orbifold $X_{i}=M\cup D_{i}$, where $D_i$ is an admissible divisor in $X_i$ whose normal orbibundle is isomorphic to \begin{small}$-\frac{1}{\alpha_{i}-1}$\end{small}$K_{D_{i}}$. Technically, we need to fix $\epsilon < |\lambda|$, $K \geq 2n+1$, $i \geq i(K,\epsilon)$, and use the slightly more specific result of \cite[Prop 6.1]{ChiLi}, which says that $2n+1$ derivatives in \eqref{eq:cxasympt} are enough to compactify. (The extension of these results from manifolds to orbifolds is immediate.)

\begin{prop}\label{properties}
For all $i \gg 0$, the orbifold $X_i = M \cup D_i$ satisfies the following properties.

{\rm (1)} The holomorphic line orbibundle $K_{X_i}+\alpha_i[D_i]$ on $X_i$ is trivial.

{\rm (2)} There exists a holomorphic map $\pi: M \to V$ onto a normal affine variety $V$, both independent of $i$, and a holomorphic extension $\pi_i:X_i \to Y_i$ of $\pi$ onto a normal projective variety $Y_i$ such that

\begin{itemize}
\item[$\bullet$] $\pi_i$ is an isomorphism onto its image in a neighborhood of $D_i$,
\item[$\bullet$] the $\Q$-Cartier divisor $[\pi_i(D_i)]$ is ample on $Y_i = V \cup \pi_i(D_i)$,
\item[$\bullet$] all of the singularities of $V$ are isolated and canonical,
\item[$\bullet$] $\pi$ is a crepant resolution of $V$, and
\item[$\bullet$] the $\Q$-Cartier divisor $K_{Y_i} + \alpha_i[\pi_i(D_i)]$ on $Y_i$ is trivial.
\end{itemize}

{\rm (3)} $X_i$ is projective.
\end{prop}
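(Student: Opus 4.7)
Orbifold adjunction combined with the hypothesis $N_{D_i/X_i} \cong -\tfrac{1}{\alpha_i-1}K_{D_i}$ immediately gives $(K_{X_i} + \alpha_i[D_i])|_{D_i} \cong \mathcal{O}_{D_i}$. To globalize this, I would construct a trivializing section directly from the Calabi-Yau form $\Omega$. Via the biholomorphism between a tubular neighborhood of $D_i$ in $X_i$ and a tubular neighborhood of the zero section of $N := N_{D_i/X_i}$ provided by Li's compactification \cite[Thm 1.6]{ChiLi}, the form $\Omega$ can be compared to the parallel form $\Omega_0$ on the cone $C = N \setminus D_i$. The identity $K_N = p^*(K_{D_i} - N) = -\alpha_i[D_i]$ (as derived in the proof of Theorem \ref{theo:properties}) forces $\Omega_0$ to have a pole of order exactly $\alpha_i$ along the zero section of $N$, so that it extends across $D_i$ as a nowhere-vanishing trivializing section of $K_N + \alpha_i[D_i]$. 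Because $\Phi^*\Omega - \Omega_0$ decays at a definite polynomial rate, $\Omega$ extends analogously as a meromorphic section of $K_{X_i}$ with pole of order exactly $\alpha_i$ and nowhere-vanishing leading coefficient along $D_i$, yielding the desired global trivialization.

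\smallskip

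\textbf{Part (2).} Because $N_{D_i/X_i}$ is positive, $M$ is strongly pseudoconvex at infinity. Grauert's theorem then produces a normal Stein complex space $V$ and a proper surjective holomorphic map $\pi: M \to V$ which is an isomorphism off a compact set and contracts the maximal compact analytic subset of $M$ to a finite set; both $V$ and $\pi$ depend only on $M$ and are therefore $i$-independent. I would form $Y_i = V \cup D_i$ by gluing $V$ to a tubular neighborhood of $D_i$ in $X_i$ via Li's compactification, producing a proper holomorphic extension $\pi_i: X_i \to Y_i$ of $\pi$ which is the identity near $D_i$. Triviality of $K_M$ forces $\pi$ to be crepant, so $V$ is Gorenstein with canonical singularities at the finite set $\pi(\mathrm{Exc}(\pi))$. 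Triviality of $K_{Y_i} + \alpha_i[\pi_i(D_i)]$ then follows from part (1) via the identity $K_{X_i} + \alpha_i[D_i] = \pi_i^*(K_{Y_i} + \alpha_i[\pi_i(D_i)])$, a consequence of crepancy and of $\pi_i$ being an isomorphism near $D_i$. The remaining algebraic assertions (affineness of $V$, $\Q$-Cartier ampleness of $[\pi_i(D_i)]$) are tied to projectivity of $X_i$, addressed below.

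\smallskip

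\textbf{Part (3).} To prove $X_i$ is projective I would first establish that $X_i$ is K\"ahler by a cutoff-and-$i\partial\overline{\partial}$-correction gluing of the AC Ricci-flat K\"ahler form on $M$ with a local K\"ahler form near $D_i$ built from the positive normal bundle (as at the end of the proof of Theorem \ref{wh}(5)), then verify $h^{0,2}(X_i) = 0$ by adapting the Kodaira-vanishing argument of Theorem \ref{wh}(3), using the trivialization of $K_{X_i} + \alpha_i[D_i]$ from part (1) together with orbifold Kodaira vanishing on an appropriate blow-up $\hat{X}_i \to X_i$ contracted from $X_i$'s singular set. Hodge symmetry for K\"ahler orbifolds then gives $H^2(X_i, \R) = H^{1,1}(X_i, \R)$, so an arbitrarily small perturbation of the K\"ahler class is integral. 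The Kodaira-Baily embedding theorem \cite{Bai57} then yields projectivity of $X_i$, whence $Y_i = \pi_i(X_i)$ is projective and $V = Y_i \setminus \pi_i(D_i)$ is affine as the complement of an ample divisor, completing the algebraic assertions of part (2). \emph{The main technical obstacle} is precisely breaking the apparent circularity here, since the most direct approaches (notably Theorem \ref{wh}(5)) already presuppose projectivity of $Y_i$; sidestepping this requires an independent K\"ahler gluing on $X_i$ and a Hodge-theoretic vanishing argument that uses part (1) but no prior algebraicity of $V$ or $Y_i$.
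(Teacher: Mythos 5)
Your overall strategy for (1) is the paper's, but it breaks down at the one step the authors themselves single out as the crux. The estimate $|\Phi^*\Omega-\Omega_0|_{g_{0,i}}=O(r_i^{\lambda+\epsilon})$ does \emph{not} directly imply that the meromorphic extension of $\Omega$ across $D_i$ has nowhere-vanishing leading coefficient: the pole-order statement lives in the local $J$-holomorphic coordinates produced by \cite{ChiLi}, while the decay is measured against a cone metric with respect to which $\Omega_0$ itself grows polynomially, and the paper explicitly remarks that ``this does not seem to imply the desired behavior of $\Omega$ directly.'' What the paper actually does is: first show $\Omega$ grows at worst polynomially and hence extends meromorphically by Riemann extension; then extract the leading term $\Omega_i$ of this extension along $D_i$, write $\Omega_i=f\,\Omega_{0,i}$ with $f$ zero-free, use $H^1(L,\R)=0$ (here $n>1$) to write $f=e^g$, bound $g=O((\log r_i)^2)$ by scaled elliptic estimates, and invoke the three-annulus lemmas of \cite{DS2} to force $g$ to be constant. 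Without some version of this Liouville-type argument your trivializing section could a priori vanish or blow up to higher order somewhere along $D_i$.

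In (2)--(3) you have inverted the paper's logical order and thereby created the circularity you worry about. The paper does not need $X_i$ to be K\"ahler in order to prove $Y_i$ projective and $V$ affine: once the Remmert reduction gives the normal compact complex space $Y_i=V\cup D_i$ with $[mD_i]$ Cartier and positive, Grauert's embedding theorem \cite[S\"atze 2 and 4]{Grau:62} embeds $Y_i$ into $\P^N$ by sections of $m'[mD_i]$ directly; projectivity of $X_i$ is then deduced \emph{afterwards} from Theorem \ref{wh}(5), whose K\"ahler gluing crucially uses the Fubini--Study pullback $\gamma$ from that projective embedding (a semipositive form, positive off ${\rm Exc}(\pi)$ and $i\partial\overline\partial$-exact on $M$) to absorb the cutoff errors. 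Your proposed independent gluing on $X_i$ lacks a substitute for $\gamma$, and your vanishing argument on a blow-up $\hat X_i$ has no source of projectivity for $\hat X_i$ without GAGA applied to the already-projective $Y_i$. Two smaller omissions: you should justify that the Weil divisor $K_V$ is actually Cartier and that ``canonical'' is tested against an algebraic resolution --- the paper handles both via Hironaka flattening \cite{HiRossi} and GAGA --- and note that the Remmert reduction gives $V$ Stein but not yet affine; affineness again comes out of the Grauert embedding.
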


\begin{proof}
(1) The point is to prove that the given holomorphic volume form $\Omega$ on $M = X_i \setminus D_i$ extends to a meromorphic volume form on $X_i$ with a pole of order $\alpha_i$ along $D_i$.

Consider the model form $\Omega_0$ on the cone $C$, which we identify with the complement of the zero section in $N_{D_i/X_i}$. Note that $\Omega_0$ is bounded with respect to $g_0$, hence grows at worst polynomially with respect to $g_{0,i}$ by \eqref{e:saviors}. Using the analog of \eqref{eq:cxasympt} for $\Omega$, we  then see that the same is true for $\Omega$. Recall that in \cite{ChiLi} the compactification $X_i = M \cup D_i$ was constructed by using \eqref{eq:cxasympt} to find local $J$-holomorphic coordinates asymptotic to local $J_0$-holomorphic coordinates centered at any point of $D_i$ in the compactified cone $C \cup D_i$. Thus, $\Omega$ blows up at worst polynomially near $D_i$ in these local coordinates. By Riemann's removable singularities theorem, $\Omega$ extends meromorphically.

Since $\Omega$ is meromorphic, using local coordinates as above, we can extract a leading term from $\Omega$, which is a meromorphic volume form, $\Omega_i$, on a tubular neighborhood of the zero section in $N_{D_i/X_i}$. Recall that $N_{D_i/X_i}$ is isomorphic to \begin{small}$-\frac{1}{\alpha_i-1}$\end{small}$K_{D_i}$, whose total space carries a tautological volume form $\Omega_{0,i}$ with a pole of order $\alpha_i$ along the zero section. Then $\Omega_i = f \Omega_{0,i}$ for some meromorphic function $f$ without zeros and with poles at worst along $D_i$. Now $\dim C = n > 1$, so $H^1(L,\R) = 0$, and hence away from the zero section we have $f = e^g$ for some holomorphic function $g$. Then $g = O((\log r_i)^2)$ because ${\rm Re}\,g$ $=$ $\log |f|$ $=$ $O(\log r_i)$ and $d({\rm Im}\,g) = -d({\rm Re}\,g)\circ J_0 = O((\log r_i)/r_i)$ by standard scaled elliptic estimates. The $3$-annulus lemmas of \cite[Section 3.2]{DS2} then imply that $g$ is constant because otherwise $g$ would grow at least polynomially (there are no decaying modes because $n > 1$, see \cite[Lemma 2.13(2)]{HS}). Thus, $\Omega_i$ is a constant multiple of $\Omega_{0,i}$, which finishes the proof.

As an aside, note that $\Omega_0 = const \cdot \Omega_{0,i}$ by the same reasoning, and $|\Phi^*\Omega-\Omega_0|_{g_{0,i}} = O(r_i^{\lambda+\epsilon})$ by assumption, but this does not seem to imply the desired behavior of $\Omega$ directly.

(2) Here we can essentially follow the proof of \cite[Lemma 2.1]{Epstein}, as we already did in our previous paper \cite[Lemma 2.3]{Conlon3}. Since $N_{D_i/X_i}$ is positive, we can easily show that $M = X_i \setminus D_i$ is $1$-convex, so that we may take its Remmert reduction $\pi: M \to V$. See e.g.~\cite[App A]{Conlon} for details and references on Remmert reductions, in particular for the fact that $V$ is Stein (although not necessarily an affine variety) and that the singularities of $V$ are normal and isolated.  Moreover, $\pi$ is an isomorphism onto its image in a neighborhood of infinity. Thus, we may compactify $V$ as a normal compact complex space $Y_i$ by adding the orbifold $D_i$ as an admissible divisor. Then we have an obvious holomorphic extension $\pi_i:X_i\to Y_i$ of $\pi$. In the following, we will write $D_i$ instead of $\pi_i(D_i)$ for simplicity.

Let $m \in \N$ be sufficiently large and divisible such that $m D_i$ is a Cartier divisor on $Y_i$. Then, by \cite[p.347, Satz 4]{Grau:62}, the associated line bundle $[mD_i]$ is positive in the sense of \cite[p.342, Defn 2]{Grau:62}. Thus, by the proof of \cite[p.343, Satz 2]{Grau:62}, $Y_i$ admits an embedding into some $\P^{N}$ given by the global sections of $m'[mD]$ for some $m' \in \N$. Thus, $Y_i$ is projective, $D_i$ is ample, and $V$ is affine.

The remaining properties stated in item (2) are now clear from item (1) and from the definitions of a canonical singularity and a crepant resolution, except for the following two subtleties. First, we need to check that $V$ is Gorenstein, i.e., that the Weil divisor $K_V$ is Cartier. Second, the property of being canonical should be tested using an algebraic rather than a complex-analytic resolution of singularities. These two points can be addressed by quoting a local version of Hironaka's flattening theorem \cite[p.321, Thm 1]{HiRossi} as in the proof of Theorem \ref{wh}(1). This gives us a projective variety $\hat{X}_i$ and a morphism $f: \hat{X}_i \to Y_i$, the blowup of $Y_i$ in an ideal sheaf with support in $V^{\rm sing} \subset V \subset Y_i$, such that $f = \pi_i \circ \sigma$ for some holomorphic map $\sigma: \hat{X}_i \to X_i$. Then $\sigma^*\Omega$ is a rational $n$-form on $\hat{X}$ by GAGA, regular on $f^{-1}(V)$ and nowhere vanishing on $f^{-1}(V^{\rm reg})$. Thus, $(f^{-1})^*\Omega$ is an algebraic trivializing section of $K_{V^{\rm reg}}$, and the discrepancies of $f$ over $V^{\rm sing}$ are nonnegative because $f$ factors through a crepant resolution of $V$. This implies the two properties that we needed to show.

(3) This is a direct application of item (2) together with Theorem \ref{wh}(5).
\end{proof}

\noindent \emph{Proof of Theorem \ref{classification}}. Given the original AC Calabi-Yau manifold $(M,g,J,\Omega)$, we have proved that $M$ is a crepant resolution of an affine variety $V$ (with at worst isolated singularities, all of which are canonical, and with trivial canonical bundle). The class $\mathfrak{k} \in H^2(M,\R)$ represented by the Ricci-flat K\"ahler form obviously pairs positively with all components of the exceptional set. We will now show that $V$ is a deformation of negative $\xi$-weight of $C$ by verifying Definition \ref{negativeweight}.

To construct $(W_i,p_i,\sigma_i)$, we consider the projective varieties $Y_i = V \cup D_i$ of Proposition \ref{properties}(2), where abusing notation we write $D_i$ instead of $\pi_i(D_i)$. Recall that $C$ together with the $\C^*$-action generated by $c_i\xi_i$ is equivariantly isomorphic to the conormal bundle to $D_i$ in $Y_i$.  Upon removing the $\C^*$-invariant divisor of Theorem \ref{thm:deform-tech}(1), the test configuration of Theorem \ref{thm:deform-tech} (i.e., the deformation of $Y_i$ to the normal cone of $D_i$) yields a $\C^*$-equivariant degeneration $p_i: W_i \to \C$ with general fiber $p_i^{-1}(1) = V$, and with special fiber $p_i^{-1}(0)$ an affine variety equivariantly normalized by our cone $C$. By Theorem \ref{thm:deform-tech}(2), $p_i^{-1}(0)$ will be isomorphic to $C$, as required, if the restriction maps
$$H^0(Y_i, \mathcal{O}_{Y_i}(mD_i)) \to H^0(D_i, \mathcal{O}_{D_i}(mD_i))$$ are surjective for every $m \in \N$. Taking cohomology in the restriction sequence
$$0 \to \mathcal{O}_{Y_i}((m-1)D_i) \to \mathcal{O}_{Y_i}(mD_i) \to \mathcal{O}_{D_i}(mD_i) \to 0,$$
whose exactness can be checked on stalks by averaging over the local uniformizing groups (a more abstract argument is given in \cite[Lemma 2.9]{HRS}), we find that these restriction maps will be surjective if $H^1(Y_i, \mathcal{O}_{Y_i}((m-1)D_i)) = 0$ for all $m \in \N$. To prove this vanishing, we will imitate the proof of Theorem \ref{wh}(3), replacing the projective orbifold $\hat{X}$ there by our $X_i$ because here we already know that $X_i$ is projective from Proposition \ref{properties}(3) (an application of Theorem \ref{wh}(5)). In short, using the generalized Kodaira vanishing theorem from Claim 1 of that proof and the fact that $-K_{X_i} = \alpha_i[D_i]$ with $\alpha_i > 0$, we obtain that $R^q (\pi_i)_*(\mathcal{O}_{X_i}((m-1)D_i)) = 0$ for all $q > 0$ as in Claim 2 of that proof. Then the Leray spectral sequence tells us that for all $q> 0$,
$$H^q(Y_i, \mathcal{O}_{Y_i}((m-1)D_i)) = H^q(X_i, \mathcal{O}_{X_i}((m-1)D_i)),$$
and the latter vanishes again by Claim 1 and because $-K_{X_i} = \alpha_i[D_i]$ with $\alpha_i > 0$.

We have now constructed a sequence $(W_i, p_i, \sigma_i)$ of $\C^*$-equivariant degenerations of $V$ to $C$ such that the induced $\C^*$-action on the central fiber is the one generated by the flow of $\xi_i$. Definition \ref{negativeweight} also requires us to check that the sequence $-k_i\mu_i/c_i$ is eventually uniformly bounded away from $0$, where $k_i$ is the vanishing order of the $i$-th deformation, $\mu_i$ is the weight of the induced $\C^*$-action on the base (for us, $\mu_i = 1$ by construction), and $c_i$ is the unique positive real scaling factor such that $c_i\xi_i$ generates an effective $\C^*$-action. As mentioned before Proposition \ref{properties}, $c_i$ can be characterized by the property that $r_i^2 = h_i^{c_i}$, where $h_i$ is a Hermitian metric on the conormal bundle to $D_i$ in the projective compactification $Y_i$. Also recall that thanks to Theorem \ref{chili}, $k_i$ can be characterized as the biggest integer such that $D_i$ is $(k_i-1)$-comfortably embedded in $Y_i$. Thus, by the arguments of \cite[pp.1461--1462]{ChiLi}, the complex structure rate of \eqref{eq:cxasympt} provides a lower bound
$$k_i \geq \lceil c_i(|\lambda|-\epsilon)\rceil,$$
valid for all $\epsilon < |\lambda|$, $K \geq 2n+1$, and $i \geq i(K,\epsilon)$. Since $\mu_i=1$, this is exactly the required property that $-k_i\mu_i/c_i$ is eventually uniformly bounded away from $0$. (Again, Theorem \ref{chili} is valid only for $n \geq 3$, but for $n = 2$ there is no need to pick an approximating sequence to begin with.)

We have now exhibited $V$ as a deformation of negative $\xi$-weight of $C$ using a particular rational approximating sequence $\xi_i \to \xi$ and an associated sequence of degenerations $(W_i,p_i,\sigma_i)$ constructed using Proposition \ref{properties} and Theorem \ref{thm:deform-tech}. We will prove Theorem \ref{classification} by showing that if we apply the proof of Theorem \ref{existence} \emph{to this particular sequence of degenerations $(W_i,p_i,\sigma_i)$}, then the resulting AC Calabi-Yau metric in the class $\mathfrak{k}$ on $M$ is equal to our original AC Calabi-Yau metric $g$ for $i \gg 0$.

Fix $\tilde{\varepsilon}>0$ and $\tilde{K} \geq 1$ as parameters for the construction of Theorem \ref{existence}. Then, for all $i\in\N$ that are sufficiently large depending on $\epsilon,K$ and $\tilde\epsilon,\tilde{K}$, we have a diagram
$$
\begin{tikzcd}[column sep=small]
& \mathcal{Y}_i \setminus \mathcal{D}_i \arrow[rrrr, "H_i", "\cong"'] & & & & \overline{\mathcal{V}}_i \setminus \mathcal{D}_i\\
C  \arrow[ur, hook] \arrow[rr, dashed, "\Phi_{{Y}_i}"] \arrow[rrrr, bend right, "H_{0,i}", "\cong"'] & & V \arrow[ul, hook'] \arrow[rrrr, bend right, "H_{1,i}", "\cong"'] & &  \arrow[ur, hook] p_i^{-1}(0)\setminus D_i  \arrow[rr, dashed, "\Phi_{\overline{{V}}_i}"]  &  &  p_i^{-1}(1) \setminus D_i\arrow[ul, hook']
\end{tikzcd}
$$
\vspace{1mm}
\noindent where the solid arrows commute and the spaces and maps are defined as follows:
\begin{itemize}
\item[$\bullet$] $\mathcal{Y}_i \to \C$ denotes the deformation to the normal cone of the pair $D_i \subset Y_i$ of Proposition \ref{properties}; $\mathcal{D}_i \subset \mathcal{Y}_i$ is the compactifying divisor, which together with the induced fibration $\mathcal{D}_i \to \C$ is naturally isomorphic to $(D_i \times \C, {\rm pr}_{\C})$; and $C,V$ are fibers of $\mathcal{Y}_i \setminus \mathcal{D}_i \to \C$ by construction.
\item[$\bullet$] $p_i: \overline{\mathcal{V}}_i \to \C$ is the analogous deformation for the pair $D_i \subset \overline{V}_i$ from the proof of Theorem \ref{existence}.
\item[$\bullet$] $H_i$ is any equivariant isomorphism of test configurations as in Theorem \ref{normalform}(3).
\item[$\bullet$] $\Phi_{\overline{{V}}_i}$ is the diffeomorphism from \cite[\S 3.1]{ChiLi} used in Step 3 of the proof of Theorem \ref{existence}.
\item[$\bullet$] $\Phi_{Y_i}$ is the diffeomorphism used to construct the compactification $Y_i = V \cup D_i$ in the first place. More precisely, $\Phi_{Y_i} = \Phi \circ \mathcal{N}_i$, where $\Phi$ is as in \eqref{eq:cxasympt} and $\mathcal{N}_i$ is the deformation of ${\rm Id}_C$ given by the Newlander-Nirenberg type construction of \cite[Prop 6.1]{ChiLi}, so that $\Phi_{Y_i}^*J$ extends smoothly to the natural compactification of $C$ by $D_i$. 
\end{itemize}

\noindent Moreover, the proof of Theorem \ref{existence} produces an AC Calabi-Yau metric $\omega_i \in \mathfrak{k}$ such that
\begin{align}
\sum_{k=0}^{\tilde{K}} r^k |\nabla_{g_0}^k([H_{1,i}^{-1} \circ \Phi_{\overline{V}_i} \circ H_{0,i}]^*g_i - g_0)|_{g_0} = O_{\tilde{K},\tilde{\epsilon},i}(r^{\max\{\lambda,-2\}+2\tilde{\epsilon}})\;\,\text{as}\;\,r\to\infty,
\label{eq:stillnotdone}
\end{align}
where $g_0$ is the given Calabi-Yau cone metric on $C$ with radius function $r$. To see this, first observe that the construction of Theorem \ref{existence} takes place on the right-hand side of the diagram, so we need to consider the base change of $\pi: (M,\mathfrak{k}) \to V$ via $H_{1,i}^{-1}$ and fix a Calabi-Yau cone metric on $p_i^{-1}(0) \setminus D_i$ as input. For this we use the pushforward of $\omega_0$ under $H_{0,i}$, which is an equivariant isomorphism of cones. Then we apply the proof of Theorem \ref{existence} and perform a base change under $H_{1,i}$.

We can now complete the proof of Theorem \ref{classification} in three steps.\medskip\

\noindent \emph{Claim 1}. For all $i \gg 0$ we define a diffeomorphism of neighborhoods of infinity in $C$ by
$$\Psi_i:=H_{0,i}^{-1} \circ \Phi_{\overline{V}_i}^{-1} \circ H_{1,i}\circ \Phi_{Y_i}.$$
Theorem \ref{classification} will follow if we can prove that for all $i \gg 0$ it holds for $k \in \{0,1\}$ that
\begin{equation}\label{eq:thedream}
r_i^k|\nabla^{k}_{g_{0,i}}\left(\Psi_i^*g_{0,i} - g_{0,i}\right)|_{g_{0,i}}=O_{i}(r_{i}^{-1/c_i})\;\,\text{as}\;\,r_i \to \infty.
\end{equation}
Here we recall again that $r_i$ is the radius function of the K\"ahler cone metric $g_{0,i}$ with Reeb vector field $\xi_i$ chosen at the beginning of this section, and the numbers $c_i > 0$, $c_i \to \infty$, are characterized by the property that $r_i^2 = h_i^{c_i}$ for some Hermitian metric $h_i$ on the conormal bundle $N_{D_i/Y_i}^{-1}$.\medskip\

\noindent \emph{Proof of Claim 1}. Suppose \eqref{eq:thedream} holds for all $i \gg 0$ and for $k \in \{0,1\}$. We would first like to replace $r_i$ by $r$ and $g_{0,i}$ by $g_0$ in this statement, increasing $i$ if necessary. Theorem \ref{p:typeIest} allows us to do so at the cost of increasing the exponent on the right-hand side by $O(|\xi_i - \xi|)$, where we are fixing some norm on ${\rm Lie}(\T)$. Thanks to the Dirichlet approximation estimate, \eqref{eq:dirichlet}, we obtain that
\begin{equation}\label{eq:nice}
r^{k}|\nabla_{g_{0}}^{k}(\Psi_i^*g_0 - g_0) |_{g_0} = O_{i}(r^{\nu_i})\;\,{\rm as}\;\,r\to\infty
\end{equation}
for all $i \gg 0$ and $k \in \{0,1\}$, where the exponents $\nu_i$ go to zero as $i \to \infty$ but are strictly negative. (With a good amount of extra work it is possible to achieve this without using Dirichlet's theorem. The key step is to improve the exponent $-1/c_i$ in \eqref{eq:thedream} to $-k_i/c_i$, which is uniformly negative, by using the $(k_i-1)$-comfortable property of the embeddings $D_i \subset Y_i$ and $D_i \subset \overline{V}_i$.)

Given \eqref{eq:stillnotdone} and \eqref{eq:nice}, it follows from the triangle inequality and a pullback trick that
$$r^k|\nabla_{g_0}^k(\Phi_{Y_i}^*g_i - g_0)|_{g_0} = O_i(r^{\nu_i})\;\,{\rm as}\;\,r\to\infty$$
for all $i \gg 0$ and $k\in\{0,1\}$. On the other hand, the same statement holds for $g$ instead of $g_i$. This follows from $\Phi^*g$ being asymptotic to $g_0$ in the space $C^\infty_\lambda(g_0)$, together with the fact that $\mathcal{N}_i$ in the definition of $\Phi_{Y_i}$ is a $C^\infty$ diffeomorphism of the end of $C$ converging to ${\rm Id}_C$ in $C^{K}_{\lambda+\epsilon}(g_{0,i})$. (As stated, \cite[Prop 6.1]{ChiLi} only implies that $\mathcal{N}_i$ is a $C^{2n+1}$ diffeomorphism converging to ${\rm Id}_C$ in $C^{2n+1}_{\lambda+\epsilon}(g_{0,i})$, but local $C^\infty$ and global $C^{K}_{\lambda+\epsilon}(g_{0,i})$ regularity of $\mathcal{N}_i$ follow from the proof of this proposition.)

We can now apply \cite[Thm 3.1]{Conlon} to conclude that $g_{i} = g$ for all $i \gg 0$. Note that the proof of this theorem only requires the two metrics to be asymptotic in $C^1_{-\delta}$ for some $\delta>0$.\hfill $\Box$\medskip\

Thus, it remains to prove the estimate \eqref{eq:thedream}. This is precisely the expected behavior if $\Psi_i$ extends to an exponential-type map \cite[Defn 4.5]{Conlon3} on a neighborhood of $D_i$ in $\overline{C}_i := C \cup D_i$. By definition this means that $\Psi_i(p) = p$ for all $p \in D_i$, $d\Psi_i|_p$ is complex linear for all $p \in D_i$, and, after identifying $\overline{C}_i = N_{D_i/\overline{C}_i} = T^{1,0}\overline{C}_i|_{D_i}/T^{1,0}D_i$, it holds for all $p \in D_i$ and $v \in N_{D_i/\overline{C}_i,p} \subset T_p^{1,0}N_{D_i/\overline{C}_i}$ that
\begin{align}\label{dergral}
d\Psi_i|_p(v) + T^{1,0}D_i = v.
\end{align}
The next two claims make this picture rigorous, thereby completing the proof of Theorem \ref{classification}.\medskip\

\noindent \emph{Claim 2}. Fix $i \gg 0$ and identify $\overline{C}_i = C\cup D_i$ with the total space of the normal bundle to $D_i$ in $\overline{C}_i$. Then $\Psi_i$ extends to an exponential-type map of class $C^6$ on a neighborhood of $D_i$.\medskip\

\noindent \emph{Proof of Claim 2}. First, there exists a natural $\C^*$-equivariant and fiber-preserving biholomorphism $\overline{H}_i: \mathcal{Y}_i \to \overline{\mathcal{V}}_i$ extending $H_i$. The point here is that the compactifying divisors $D_i$ in the fibers of $\mathcal{Y}_i$ are canonically identified with each other by the deformation to the normal cone construction; this also holds for the compactifying divisors $D_i$ in the fibers of $\overline{\mathcal{V}}_i$; and the map $\overline{H}_i$ on these divisors is simply the $\C^*$-equivariant map of cones $H_{0,i}$ after quotienting by $\C^*$ on each cone.

Second, $\Phi_{\overline{V}_i}$ extends to a smooth exponential-type map by construction.

Third, $\Phi_{Y_i} = \Phi \circ \mathcal{N}_i$ also extends to an exponential-type map by construction. (The $\Phi$ factor may seem confusing but recall that the compactification $Y_i = V \cup D_i$ was \emph{defined} by pulling the complex structure on $V$ back by $\Phi$ and comparing it to the model complex structure on $\overline{C}_i = C \cup D_i$, which then defines the normal bundle to $D_i$ in $Y_i$.) The statement and proof of \cite[Prop 6.1]{ChiLi} only ensure that this extension, while $C^\infty$ away from $D_i$, is $C^{\min\{K,\lceil c_i(|\lambda|-\epsilon)\rceil-1\}}$ at the points of $D_i$. However, by choosing $K,i$ large enough we can certainly arrange that its regularity is at least $C^6$.

Given these facts, it is now clear that $\Psi_i$ extends to a $C^6$ map on a tubular neighborhood of $D_i$ in $\overline{C}_i$ for all $i \gg 0$. Moreover, this extension obviously fixes every point of $D_i$, and its differential at all points of $D_i$ is complex linear because $\overline{H}_i$ is holomorphic and $\Phi_{\overline{V}_i}, \Phi_{Y_i}$ extend to exponential-type maps. It remains to verify \eqref{dergral}. To this end, it suffices to show under the identification of infinity divisors described above that $d\overline{H}_{0,i}|_p(v) = d\overline{H}_{1,i}|_p(v)$, where $\overline{H}_{0,i}:=\overline{H}_i|_{\overline{C}_i}$ and $\overline{H}_{1,i}:=\overline{H}_i|_{\overline{V}_i}$.

This property follows from the equivariance of $\overline{H}_i$. More precisely, let $\varphi_{t}$ denote the time $t$ flow of the vector field on $\mathcal{Y}_i$ and $\overline{\mathcal{V}}_i$ induced by the Euler vector field on $\C^{*}$ via the respective $\C^{*}$-actions. Then the fact that $\varphi_{t}$ preserves $\mathcal{D}_i$ implies that we have an induced map
$$d\varphi_{t}|_{q}:N_{D_i/{\rm fiber},q}\to N_{D_i/{\rm fiber},\varphi_{t}(q)}$$
for all points $q$ on the compactifying divisor $\mathcal{D}_i \cong D_i \times \C$ in either $\mathcal{Y}_i$ or $\overline{\mathcal{V}}_i$. These normal bundles are also naturally identified with each other via the deformation to the normal cone construction, and in this sense
$d\varphi_{t}|_{q}=e^{-t}{\rm Id}$. Hence the
$\C^{*}$-equivariance of $\overline{H}_i$ implies that
\begin{equation*}
d\overline{H}_{1,i}|_p = d\varphi_{-t}|_{\overline{H}_i(\varphi_{t}(p))} \circ d\overline{H}_i|_{\varphi_{t}(p)} \circ d\varphi_{t}|_{p}=d\overline{H}_i|_{\varphi_{t}(p)}.
\end{equation*}
Letting $t\to-\infty$ now gives us what we need.\hfill $\Box$\medskip\

\noindent \emph{Claim 3}. For any given $i \gg 0$, let $W$ be an open neighborhood of $D_{i}$ in $\overline{C}_i$ and let $\Upsilon:W\to\overline{C}_i$ be an exponential-type map of class $C^{6}$.
Then it holds for $k \in \{0,1\}$ that 
\begin{equation*}
r_i^k|\nabla^{k}_{g_{0,i}}(\Upsilon^*g_{0,i} - g_{0,i})|_{g_{0,i}}=O_{\Upsilon,i}(r_{i}^{-1/c_i})\;\,\text{as}\;\,r_i\to\infty.
\end{equation*}
\vskip3mm

\noindent \emph{Proof of Claim 3}. Very similar computations can be found in \cite[\S 2.2]{Conlon3}, so we will be brief here. We identify $\overline{C}_i$ with the total space of the normal bundle to $D_i$ in $\overline{C}_i$. Then, in a neighborhood of an arbitrary point on $D_i$, we let $(w_{1},\ldots,w_{n})$ be holomorphic coordinates on a uniformizing chart with $w_{n}$ the coordinate along the fibers, so that $D_{i}$ is locally given by $\{w_{n}=0\}$. Then $r_{i}=e^{f}|w_{n}|^{-c_{i}}$, where $f=f(w_{1},\ldots,w_{n-1})$ is a smooth real-valued function. We find from \cite[Lemma B.3]{Conlon3} that
\begin{equation}\label{willyalookatthat}
\Upsilon^{*}w_{n}=w_{n}+A_{n,1}^{[4]}w_{n}^{2}+A_{n,2}^{[4]}w_{n}\overline{w}_{n}+A_{n,3}^{[4]}\overline{w}_{n}^{2},
\end{equation}
where a symbol $A^{[\ell]}$ denotes a function of class $C^{\ell}$. Similarly, for all $j < n$,
\begin{equation}\label{easy1}
\Upsilon^{*}(w_{j}-A_j^{[5]}(w_1,\ldots,w_{n-1})w_n)=w_{j}+ A_{j,1}^{[3]}w_{n}^2+A_{j,2}^{[3]}w_n\overline{w}_{n} + A_{j,3}^{[3]}\overline{w}_n^2,
\end{equation}
and any $C^{6}$ function $h$ of the variables $(w_{1},\ldots,w_{n-1})$ satisfies
\begin{equation}\label{easy2}
\Upsilon^{*}h=h+A_{h,1}^{[5]}w_{n}+A_{h,2}^{[5]}\overline{w}_{n}.
\end{equation}
In addition, a scaling argument shows that
\begin{eqnarray}
\label{reg_est1}
w_j = O(1)\;\,\textrm{with infinitely many $g_{0,i}$-derivatives for all $j < n$},\\
w_{n}^{\pm 1}= O(r_{i}^{\mp (1/c_i)})\;\,\textrm{with infinitely many $g_{0,i}$-derivatives},\label{reg_est2}
\end{eqnarray}
and hence that every function $A^{[\ell]}$ is $O(1)$ with $\ell$ many $g_{0,i}$-derivatives.

From \eqref{willyalookatthat}, \eqref{reg_est2} and \eqref{easy2} we easily find that
\begin{equation}\label{finito}
\Upsilon^{*}r_{i}=r_{i}+O(r_{i}^{1-(1/c_i)})\;\,\textrm{with four $g_{0,i}$-derivatives.}
\end{equation}
Using also \eqref{easy1}, the computations of \cite[\S2.2.4]{Conlon3} together with \cite[Lemma 2.14]{Conlon} imply that
\begin{equation}\label{finito2}
\Upsilon^{*}J_{0}-J_{0}=O(r_{i}^{-1/c_i})\;\,\textrm{with two $g_{0,i}$-derivatives}.
\end{equation}
Using the formula $\omega_{0,i} = -\frac{1}{4}d(dr_{i}^2 \circ J_0)$ and \eqref{finito}, \eqref{finito2}, we can now show that 
$$\Upsilon^*\omega_{0,i} - \omega_{0,i} = O(r_{i}^{-1/c_i})\;\,\textrm{with one $g_{0,i}$-derivative}.$$
Combining this with \eqref{finito2} we obtain Claim 3, and Theorem \ref{classification} is proved.\hfill $\Box$\medskip\

To summarize, every AC Calabi-Yau manifold $(M,g,J,\Omega)$ is diffeomorphism equivalent to one of the examples constructed in the proof of Theorem \ref{existence}. Given a Calabi-Yau cone $(C,g_0,J_0,\Omega_0)$, the input for Theorem \ref{existence} consists of a deformation $V$ of negative $\xi$-weight of $C$, a crepant resolution $M$ of $V$, and a K\"ahler class $\mathfrak{k}$ on $M$. But even if we fix all of these data, some freedom to carry out our construction remains, and in the proof of Theorem \ref{classification} we had to make choices to recover the given manifold $(M,g,J,\Omega)$. More precisely, the following degrees of freedom still exist:
\begin{itemize}
\item[(1)] Pull back $g_0$ by an automorphism of $(C,\xi)$ without changing the identification of $V$ and $C$ at infinity. The statement of Theorem \ref{existence} already reflects this.
\item[(2)] Pull back the AC Calabi-Yau metric constructed on $M$ by an automorphism of $(M,\mathfrak{k})$.
\item[(3)] Pick different sequences $\xi_i \to \xi$. Even for a fixed sequence, our uniqueness theorem may only apply if $i$ is bigger than the first index for which we have existence.
\item[(4)] There might exist different equivariant degenerations $p_i: W_i \to \C$ from $V \cong p_i^{-1}(1)$ to $C$.
\end{itemize}
\noindent One can ask whether these choices lead to non-isometric AC Calabi-Yau metrics in the class $\mathfrak{k}$. For (1) we already asked this in the Introduction. For (2) this is not the case by definition. It also seems quite unlikely that (3)--(4) can generate non-isometric metrics; for example, (3) is an issue only for irregular cones, and in Section \ref{s:uniqueness} we will prove that (4) is impossible in certain examples.

\section{Theorem \ref{uniqueness}, and the case of toric cones}\label{s:uniqueness}

\subsection{Background from deformation theory}\label{ss:def-thy}

In order to obtain concrete classification results for AC Calabi-Yau manifolds with a fixed tangent cone $C$ at infinity, we need to be able to determine explicitly the deformations of negative $\xi$-weight of the cone $C$ in question.

Consider an affine algebraic variety with a unique singular point. Schlessinger \cite{schlessinger} constructed a formal deformation of the singularity which is semi-universal in the category of formal deformations. By work of Grauert \cite{Grauert-def}, this deformation can be lifted to a complex-analytic deformation which is semi-universal in the category of complex-analytic deformations. Similarly, by work of Artin \cite{Artin} and Elkik \cite{Elkik2}, Schlessinger's deformation can be lifted to an affine algebraic deformation, although the correct semi-universality property then involves classifying maps which are not necessarily globally defined and algebraic but which only exist in an \'etale neighborhood of the singularity. See \cite[p.175, Example 4.5]{Artin2} for a concise statement of the final result in this direction.

Consider now a Kähler cone $C$ with Reeb vector field $\xi$. Then, by the above, as an affine variety with an isolated singularity, $C$ admits an Artin-Elkik semi-universal deformation $\mathcal{W} \to S$, which is a flat affine morphism over an affine base. To classify the deformations of negative $\xi$-weight of $C$ we require some additional properties of the family $\mathcal{W} \to S$. Let $\tilde\xi$ be a periodic Reeb vector field on $C$ obtained by a Type I deformation of $\xi$. Abusing notation, we identify $\tilde\xi$ with the unique effective algebraic $\C^*$-action on $C$ generated by a positive real multiple of $-J\tilde\xi$. Then we assume that:

\begin{itemize}
\item[(1)] The $\C^{*}$-action $\tilde\xi$ on $C \subset \mathcal{W}$ extends to an algebraic $\C^*$-action on $\mathcal{W}$ and the map $\mathcal{W} \to S$ is equivariant with respect to this action and some algebraic $\C^*$-action on $S$.
\item[(2)] All equivariant flat algebraic deformations of $(C,\tilde\xi)$ admit an equivariant complex-analytic classifying map to the family $\mathcal{W} \to S$ in some small neighborhood of the apex of $C$.
\end{itemize}

\noindent If $C$ is either toric with $\xi$ in the Lie algebra of the torus, or quasi-regular and a complete intersection of quasi-homogeneous hypersurfaces, then (1) holds thanks to work of Altmann \cite{altmann} and Slodowy \cite[p.9, Theorem]{Slodowy}. In these works, the Artin-Elkik deformation is also constructed directly for these two types of cones. Slodowy \cite[pp.12--13, Remarks 1)--2)]{Slodowy} sketches a proof of the fact that (1) implies (2) for arbitrary cones by showing how a classifying map given by formal power series can be made equivariant in such a way that convergent power series remain convergent.

It is unclear to us whether (1) is known for any other class of cones. In the formal category, (1) was established for all cones and all actions of linearly reductive groups \cite{Pinkham,Rim}, which is of course the expected generality (see \cite{doan} for a counterexample without reductivity). In the complex-analytic category, (1) was established for all cones and all $\C^*$-actions \cite[p.25, Thm 5(c)]{Hau}.

Properties (1)--(2) allow for the classification of deformations of negative $\xi$-weight of $C$. In the case of $2$-dimensional Calabi-Yau cones, i.e., Kleinian surface singularities, this argument was used by Kronheimer in his classification of gravitational instantons \cite{Kronheimer2}, relying on a suitable version of Theorem \ref{classification} that he had proved using twistor theory in his setting \cite[p.691]{Kronheimer2}. We now formalize this argument as a lemma, which is implicit in Kronheimer's work \cite[(2.5)]{Kronheimer2}.

\begin{lemma}\label{global}
Let $(C,\xi)$ be a Kähler cone whose Artin-Elkik deformation $\mathcal{W} \to S$ satisfies property $(1)$ above. Let the affine variety $V$ be a deformation of negative $\xi$-weight of $C$. Then $V$ is isomorphic as an affine variety to a connected component of some fiber of the family $\mathcal{W}\to S$.
\end{lemma}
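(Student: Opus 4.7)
\emph{Approach.} The plan is to exhibit $V$ as a fiber component of the Artin-Elkik family $\mathcal{W} \to S$ by constructing, for $i$ sufficiently large, a global $\C^*$-equivariant algebraic classifying morphism from the degeneration $p_i: W_i \to \C$ of Definition \ref{negativeweight} into $\mathcal{W} \to S$, and then restricting to the fiber of $p_i$ over $1 \in \C$.

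\emph{Step 1 (local classifying map).} Fix any sufficiently large $i$ and set $\tilde\xi = c_i\xi_i$. Then $p_i: W_i \to \C$ is an equivariant flat algebraic deformation of $(C, \tilde\xi)$ in the sense of property (2), with weight $\mu_i$ on the base. By property (1), the $\tilde\xi$-action extends algebraically to $\mathcal{W}$ and $S$. Property (2) then furnishes a $\C^*$-equivariant complex-analytic classifying morphism $(f, g)$ defined on a $\C^*$-invariant open neighborhood $\mathcal{U}$ of the apex $o \in W_i$: the map $f: \mathcal{U} \to \mathcal{W}$ covers $g: T_0 \to S$ on a neighborhood $T_0$ of $0 \in \C$, sends $o \mapsto o$ and $0 \mapsto o_S$, restricts to the identity $C \to C$ on central fibers, and realizes a local identification $W_i|_{T_0} \cong g^*\mathcal{W}$ of equivariant one-parameter deformations.

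\emph{Step 2 (globalization and algebraicity).} By condition (4) of Definition \ref{negativeweight}, every $\C^*$-orbit in $W_i$ has the apex $o$ in its closure; the analogous contractivity holds for the $\C^*$-actions on $\mathcal{W}$ and $S$, since these actions are compatible with $\tilde\xi$ on the central fiber $C$ of $\mathcal{W}$. Using equivariance of $(f,g)$ on $\mathcal{U}$ and writing $\tilde\sigma$ for the action on $\mathcal{W}$, extend $f$ and $g$ globally by the formula $f(w) := \tilde\sigma(z^{-1}, f(\sigma_i(z, w)))$, valid for any $z \in \C^*$ with $\sigma_i(z, w) \in \mathcal{U}$. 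This extension is well-defined by the equivariance already present on $\mathcal{U}$. The resulting analytic morphism is automatically algebraic because the coordinate rings of the affine varieties $W_i$ and $\mathcal{W}$ decompose as direct sums of finite-dimensional weight spaces under their attractive $\C^*$-actions, and a $\C^*$-equivariant analytic morphism between such varieties is determined by its finitely many nonzero components in each weight.

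\emph{Step 3 (identification and main difficulty).} By Step 1, the globalized map $f: W_i \to g^*\mathcal{W}$ over $\C$ induces an isomorphism of formal completions at $o$. At the level of graded coordinate rings, a weight-preserving homomorphism between rings whose weight spaces are finite-dimensional and that induces an isomorphism on formal completions at the fixed point is already a global isomorphism. Hence $W_i \cong g^*\mathcal{W}$ as equivariant deformations over $\C$; restricting to the fiber over $1 \in \C$ then yields an isomorphism of $V$ with a connected component of $\mathcal{W}_{g(1)}$, proving the lemma. The two technically subtlest points are the globalization via equivariance in Step 2 and the formal-to-global upgrade in Step 3. Both depend crucially on the attractive graded structure of the $\C^*$-actions on $W_i$, $\mathcal{W}$, $\C$, and $S$; without the grading, a germ-level classifying map of the sort supplied by property (2) would in general not extend to a global algebraic isomorphism, and this is exactly the rigidity that Slodowy's remarks cited before the lemma are designed to exploit.
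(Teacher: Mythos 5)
Your overall strategy coincides with the paper's: get a germ-level equivariant classifying map from properties (1)--(2), globalize it using the contracting $\C^*$-action on the source $W_i$, and observe that equivariance forces algebraicity. Steps 1 and 2 are essentially sound (with one caveat noted below). The genuine gap is in Step 3. The ``formal-to-global upgrade'' you invoke --- that a weight-preserving homomorphism of graded rings with finite-dimensional weight spaces which is an isomorphism on formal completions at the fixed point is a global isomorphism --- is false as stated. Take $B=\C[x]\times\C[y]$ (two disjoint lines, weight $1$ on each coordinate) and $\phi$ the projection to $A=\C[x]$: all weight spaces are finite-dimensional, $\hat\phi$ is an isomorphism of completions at the origin of the first line, yet $\phi$ has a kernel. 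The upgrade requires $\C[g^*\mathcal{W}]$ to inject into its completion at $o$, i.e.\ that every irreducible component of $g^*\mathcal{W}=\C\times_S\mathcal{W}$ passes through $o$ --- and nothing guarantees this. Consequently your conclusion $W_i\cong g^*\mathcal{W}$ is too strong: the correct (and needed) statement is that $W_i$ maps isomorphically onto a \emph{connected component} of $g^*\mathcal{W}$, which is exactly why the lemma asserts only that $V$ is a connected component of a fiber. Even the weaker statement does not follow from your argument: a careful version of your ring-theoretic computation only shows that $W_i$ is carried isomorphically onto the union of irreducible components of $g^*\mathcal{W}$ through $o$, and one must still rule out other components meeting this union away from $o$. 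The paper closes this hole by embedding $g^*\mathcal{W}$ equivariantly in $\C^N$, noting that every coordinate either vanishes on the germ at $o$ or has positive weight, deducing that the union of components meeting the image lies in a positive-weight invariant subspace, and concluding by scaling that this union equals the image --- hence the image is open and closed.

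A secondary issue: you assert that the $\C^*$-actions on $\mathcal{W}$ and on $S$ are contractive ``since these actions are compatible with $\tilde\xi$ on the central fiber.'' That inference is invalid; property (1) only says the action extends, and the extended action on $S\subset T^1_C$ typically has weights of both signs. Fortunately the globalization in Step 2 only needs contractivity of the action on the source $W_i$ (Definition \ref{negativeweight}(4)) together with the fact that negative-weight and zero-weight homogeneous analytic functions on an attractive irreducible cone are constant, so Step 2 survives; but the ``attractive graded structure on $\mathcal{W}$ and $S$'' cannot be used as a crutch in Step 3, which is where you actually lean on it.
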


\begin{proof}
Fix any element $\tilde\xi$ of the sequence $\xi_i \to \xi$ from Definition \ref{negativeweight} and consider the corresponding equivariant degeneration $W \to \C$ of $V$ to $(C,\tilde\xi)$. Let $0$ denote the origin in $\C$ as well as the point of $S$ over which $C$ lies. Let $o$ denote the apex of $C$. By \cite[pp.12--13]{Slodowy}, (1) implies (2), so there exist a $\C^*$-equivariant map $H: (\C,0) \to (S,0)$ of germs of complex-analytic spaces and an isomorphism $I: (W,o) \to H^*(\mathcal{W},o)$ of germs of $\C^*$-equivariant deformations of $(C,o)$.

We now globalize $H$ and $I$ by exploiting their equivariance.
The key point is the negative weight condition on the family $W \to \C$. In particular, the induced $\C^*$-action on the base is nontrivial, so for any $x \in \C$ there exists a $t \in \C^*$ such that $t \cdot x$ lies in the domain of $H$ and we can extend $H$ via $\hat{H}(x) := t^{-1} \cdot H(t \cdot x)$. Since $H$ is equivariant, this extension is well-defined and equivariant. Thus, if we embed $S$ into some $\C^N$ in such a way that the $\C^*$-action on $S$ becomes diagonal, then every component of $\hat{H}$ will be a homogeneous polynomial. We can now consider the pullback $\hat{H}^*\mathcal{W} \to \C$ in the algebraic category. By assumption, the map $I$ provides a local analytic isomorphism between the equivariant algebraic deformations $W \to \C$ and $\hat{H}^*\mathcal{W} \to \C$ of $(C,\tilde\xi)$ in some neighborhood of $o$. We may then globalize $I$ in the same way as $H$, using the fact that the $\C^*$-action on $W$ sends every point of $W$ into arbitrarily small neighborhoods of $o$. This yields a well-defined equivariant and fiber-preserving bijection $\hat{I}$ from $W$ onto $\hat{I}(W) \subset \hat{H}^*\mathcal{W}$. In fact, $\hat{I}(W)$ is open in the analytic topology and $\hat{I}$ is a local biholomorphism onto $\hat{I}(W)$ because in the definition of $\hat{I}$, i.e., $\hat{I}(x) := t^{-1} \cdot I(t \cdot x)$ for all $x \in W$ and $t \in \C^*$ such that $t \cdot x$ lies in the domain of $I$, we can choose $t \in \C^*$ to be locally independent of $x \in W$, as can be seen from Theorem \ref{normalform}(1) applied to $W$.

Embed $\hat{H}^*\mathcal{W}$ into some $\C^N$ in such a way that the $\C^*$-action on $\hat{H}^*\mathcal{W}$ becomes diagonal. Every component function of $\C^N$ either restricts to zero on the germ $(\hat{H}^*\mathcal{W},o)$ or has positive $\C^*$-weight because composition with $I$ produces a holomorphic function on the germ $(W,o)$ that vanishes at $o$ and is homogeneous under the given $\C^*$-action, which contracts $W$ into $o$. (As an aside, this shows that the components of $I$, hence of $\hat{I}$, are actually polynomials.) Now, a regular function on $\hat{H}^*\mathcal{W}$ vanishes identically on this germ if and only if it vanishes identically on all irreducible components of $\hat{H}^*\mathcal{W}$ that meet the germ, or equivalently (by continuity of the $\C^*$-action) that meet its $\C^*$-orbit, i.e., the complex-analytically open set $\hat{I}(W)$. Thus, the union of all the irreducible components of $\hat{H}^*\mathcal{W}$ that meet $\hat{I}(W)$ lies in an invariant linear subspace of $\C^N$ on which the $\C^*$-action has positive weights. Scaling now shows that this union is equal to $\hat{I}(W)$.  Thus, $\hat{I}(W)$ is a connected component of $\hat{H}^*\mathcal{W}$, hence is a disjoint union of connected components of Artin-Elkik fibers.

We already know that $\hat{I}$ is injective, locally (hence globally) biholomorphic onto some connected component of $\hat{H}^*\mathcal{W}$, and algebraic. Moreover, as the key to the proof of all of this, $\hat{I}$ is equivariant with respect to $\C^*$-actions on both sides that contract the respective variety into the point $o$. The same properties can now be proved for $\hat{I}^{-1}$ by reversing the role of domain and target. This shows that $\hat{I}$ is an isomorphism of affine varieties. Because $\hat{I}, \hat{I}^{-1}$ are by construction fiber-preserving, the lemma now follows by restricting $\hat{I}$ to the generic fiber, $V$, of the family $W \to \C$.
\end{proof}

\subsection{Proof of Theorem \ref{uniqueness}}\label{ss:proofC}

We now prove the following expanded version of Theorem \ref{uniqueness}. Thanks to Theorem \ref{classification} it suffices to identify the underlying complex manifolds, which is what we do here.

\begin{theorem}
Let $D$ be a K\"ahler-Einstein $($resp.~a toric non-K\"ahler-Einstein$)$ Fano manifold. For $k \in \N$ dividing $c_1(D)$ $($resp.~for $k=1)$, let $M^n$
be an {\rm AC} Calabi-Yau manifold with asymptotic cone $C = (\frac{1}{k}K_{D})^{\times}$ given by the Calabi ansatz $($resp.~by \cite{futaki}$)$. If $D$ is a del Pezzo surface of degree $\geq7$, $\mathbb{P}^{3}$, or a quadric $Q^{n-1}\subset\mathbb{P}^{n}$ with $k = n-1$, then Table \ref{t:table} lists the possibilities for $M \neq \C^n$.
\end{theorem}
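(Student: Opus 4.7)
The plan is to combine Theorem \ref{classification} with Lemma \ref{global} to reduce the problem to an explicit deformation-theoretic analysis of each cone $C=(\tfrac{1}{k}K_D)^{\times}$ in the list, followed by an enumeration of the possible crepant resolutions of the resulting deformations.

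By Theorem \ref{classification}, every AC Calabi-Yau manifold with asymptotic cone $C$ is diffeomorphic to a crepant resolution $\pi\colon M\to V$ of some deformation $V$ of negative $\xi$-weight of $C$, carrying a K\"ahler class $\mathfrak{k}$ that pairs positively with every irreducible subvariety of $\mathrm{Exc}(\pi)$; conversely, Theorem \ref{existence} guarantees that every such $(V,\pi,\mathfrak{k})$ does admit an AC Calabi-Yau structure. To apply Lemma \ref{global}, I first verify property (1) of Section \ref{ss:def-thy} in each case. For the del Pezzo and $\P^3$ cases, the underlying $D$ is toric, the cone $C$ is toric, and property (1) follows from \cite{altmann}; for the quadric cone $\{z_0^2+\cdots+z_n^2=0\}\subset\C^{n+1}$, the cone is a quasi-homogeneous hypersurface and property (1) follows from \cite{Slodowy}. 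Lemma \ref{global} then realises every admissible $V$ as a connected component of a fiber of the Artin--Elkik semi-universal deformation of $C$, which is computable explicitly in every listed case (the one-parameter smoothing $\{z_0^2+\cdots+z_n^2=t\}$ in the quadric case, and the Minkowski-decomposition family of \cite{altmann} for the toric cones). The $\xi$-weight of each fiber can then be read off directly from the $\C^*$-weight of the deformation parameter, singling out the fibers that qualify as negative-weight deformations.

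It remains to enumerate crepant resolutions and to verify the K\"ahler positivity condition on $\mathfrak{k}$. In the quadric case, the generic smooth fiber is biholomorphic to $T^*\mathbb{S}^n$: for $n\geq 4$ it is already smooth, so $M=V$ and we obtain Stenzel's example; for $n=3$ the threefold ordinary double point additionally admits the two small resolutions $\mathcal{O}_{\P^1}(-1)^{\oplus 2}$, yielding the Candelas--de la Ossa examples. For $D=\P^3$ with $k=2$, the cone is $\C^4/\{\pm\mathrm{Id}\}$, whose equivariant Artin--Elkik base is a point (no $\C^*$-equivariant smoothings of the required negative weight) and whose only crepant partial resolution fails to have trivial canonical bundle, yielding the nonexistence statement. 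For the remaining toric del Pezzo surfaces of degree $\geq 7$, I use Altmann's explicit description of the deformation space together with the classification of crepant resolutions of Gorenstein toric threefold singularities by regular lattice triangulations, in each case pairing the resulting $(V,M)$ with a K\"ahler class satisfying the positivity hypothesis to match the entries of Table \ref{t:table}.

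The main obstacle is not the reduction but the concrete combinatorial bookkeeping in the last step: in the del Pezzo cases the versal base may have several irreducible components, and a given negative-weight fiber may admit several inequivalent crepant resolutions, so matching each pair $(V,M)$ with a valid positive class $\mathfrak{k}$ requires an explicit sweep. This is essentially mechanical, relying only on Altmann's theorem and standard toric-resolution machinery together with the Kodaira--Nakai--Moishezon criterion applied component-by-component to $\mathrm{Exc}(\pi)$; no substantially new technique is needed beyond what is already developed in Sections \ref{s:existence}--\ref{s:classification}.
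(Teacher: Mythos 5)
Your proposal is correct in outline and follows the same architecture as the paper: reduce via Theorem \ref{classification} and Lemma \ref{global} to identifying the negative-weight deformations $V$ with connected components of fibers of an explicit, equivariant Artin--Elkik family, then enumerate the K\"ahler crepant resolutions of each $V$. Two of your sub-steps diverge from the paper in ways worth flagging. For case (ii), the cone over $\P^1\times\P^1$ with $k=1$, you propose to read the deformations off Altmann's toric description; the paper deliberately avoids this because it could not locate an explicit reference for that particular computation, and instead identifies the compactified Remmert reduction $(Y,[D])$ as a del Pezzo $3$-fold of degree $8$ and invokes the classification of \cite{AG5} --- your route is legitimate in principle, but you would need to actually carry out Altmann's computation for this cone rather than cite it. For the enumeration of crepant resolutions, regular lattice triangulations only produce the \emph{toric} crepant resolutions; to know these exhaust all K\"ahler crepant resolutions you still need Mori's flop-connectedness theorem \cite[Thm 3.5.1]{flops}, which is exactly what the paper applies directly (classifying the flops of one visible resolution $M_0$), so the triangulation step is an unnecessary detour. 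Finally, your argument for case (vii) is misstated: a crepant resolution of a Gorenstein variety with trivial $K_V$ automatically has trivial canonical bundle, so the obstruction is not that ``the only crepant partial resolution fails to have trivial canonical bundle'' but that $\C^4/\{\pm\mathrm{Id}\}$ is terminal, so any crepant $\pi\colon M\to C$ would have to be small, which is impossible ($C$ is $\Q$-factorial; the paper instead rules it out via the Picard rank $1$ of $D$). These are fixable detail-level issues rather than gaps in the overall strategy.
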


\begin{table}
\begin{tabular}{|l|l|c|c|c|c|c|}\hline
				& $D$							& $k$	&  realizations of $C = (\frac{1}{k}K_{D})^{\times}$			& $M$							& structure of $M$			\\\hline\hline
\textnormal{(i)} 		& $\mathbb{P}^{2}$	 				& $1$ 	& $\C^{3}/\Z_{3}$ [rigid] 								& $K_{D}$ 						& $D$ 					\\\hline
\textnormal{(ii)} 	& $\mathbb{P}^{1}\times\mathbb{P}^{1}$	& $1$ 	& $\{\sum_{i=1}^{4}z_{i}^{2}=0$ in $\C^{4}\}/\Z_{2}$			& $K_{D}$ 						& $D$					\\\cline{5-6}
				& 								&  		& 												& $T^*\R\P^3$ 						& smoothing  				\\\hline
\textnormal{(iii)} 	& $\mathbb{P}^{1}\times\mathbb{P}^{1}$ 	& $2$ 	& $\sum_{i=1}^{4}z_{i}^{2}=0$ in $\C^{4}$					& $\mathcal{O}_{\P^1}(-1)^{\oplus 2}$	& $\mathbb{P}^{1}$ 			\\\cline{5-6}
				& 								&  		& 												& $T^*\mathbb{S}^3$				& smoothing   				\\\hline
\textnormal{(iv)} 	& $Q^{n-1}$, $n\geq4$ 				& $n-1$ 	& $\sum_{i=1}^{n+1}z_{i}^{2}=0$ in $\C^{n+1}$ 				& $T^*\mathbb{S}^n$				& smoothing				\\\hline
\textnormal{(v)} 	& ${\rm Bl}_{p}\P^2$					& $1$ 	& $20$ quadrics in $\C^{12}$ \cite{dP1-physics}			& $K_{D}$ 						& $D$					\\\cline{5-6}
				& 								&  		& [rigid]	    										& the flop of $K_{D}$		& $\mathbb{P}^{2}\bigvee_{p}\mathbb{P}^{1}$   \\\hline
\textnormal{(vi)} 	& ${\rm Bl}_{p,q}\P^2$ 				& $1$	& $14$ quadrics in $\C^{9}$ \cite{dP2-physics}				& $K_{D}$							& $D$ 	  				\\\cline{5-6}
				& 								&  		& 								& one of three  	& ${\rm Bl}_{p}\mathbb{P}^{2}\bigvee_{q}\mathbb{P}^{1}$   				\\\cline{6-6}
				& 								&  		& 								& distinct flops 	& $\mathbb{P}^{2}\bigvee_{p}\mathbb{P}^{1}\bigvee_{q}\mathbb{P}^{1}$  		\\\cline{6-6}
				& 								&  		& 								& of $K_{D}$ 	& $(\mathbb{P}^{1}\times\mathbb{P}^{1})\bigvee\mathbb{P}^{1}$  			\\\cline{5-6}
				& 								&  		& 								& ${\rm Bl}_{p}\mathbb{P}^{3}\setminus{\rm Bl}_{p}Q^{2}$ 	& smoothing   				\\\hline
\textnormal{(vii)} 	& $\mathbb{P}^{3}$					& $2$ 	& $\C^{4}/\Z_{2}$ [rigid]								& none 							& none				 	\\\hline
\textnormal{(viii)} 	& $\mathbb{P}^3$					& $1$ 	& $\C^{4}/\Z_{4}$ [rigid]								& $K_D$ 							& $D$ 					\\\hline
\end{tabular}
\bigskip\
\caption{\label{t:table}Classification results for special cones. $M$ is always either a smoothing or a resolution of $C$ (necessarily the latter if $C$ is rigid). For resolutions the last column shows the exceptional set. $\bigvee$ denotes a one-point union of subvarieties.}
\end{table}

The general construction of the flops mentioned here can be found in \cite[Example 4.8]{Kollar}.

\begin{proof}
Each cone in Table \ref{t:table} is toric or a complete intersection singularity, so its Artin-Elkik family satisfies property (1) from Section \ref{ss:def-thy} thanks to \cite{altmann, Slodowy}. Hence, by Lemma \ref{global}, every $\C^{*}$-equivariant deformation of negative $\xi$-weight of one of these cones is isomorphic to a connected component of a fiber of the Artin-Elkik family of the cone. Now, the cone in (i), (v), (vii) and (viii) is rigid, whereas the others have exactly one deformation, which is smooth. For (i), (vii), (viii) this follows from \cite{Schlessinger2}, for (v), (vi) from \cite[(9.1)]{altmann}, and for (iii), (iv) from \cite{Kas}. In principle, case (ii) is also covered by \cite{altmann}, but since we have no explicit reference for this computation we instead argue as follows.

In fact, some of these cases can also be treated by applying the classification of log-Fano varieties \cite[Defn 2.1.1]{AG5} to the compactified Remmert reduction $(Y, [D])$ of $M$, rather than via deformation methods. In (i) and (ii), $(Y,[D])$ is a del Pezzo $3$-fold of degree $9$ resp.~$8$ \cite[Defn 3.2.1]{AG5}. According to \cite[Rmk 3.2.6 and Thm 3.3.1]{AG5}, the only possible examples are $(\P^3, \mathcal{O}(2))$ and projective cones. In (iii) and (iv), $Y$ must be a quadric by \cite[Thm 3.1.14]{AG5}, and hence a projective cone.

It remains to classify all possible crepant resolutions $M$ of $C$, or at least those carrying a K\"ahler form. To this end, we will make use of the fact that $M$ is quasi-projective by Theorem \ref{existence}(2). For (i), (ii), (iii), (v), and (vi), observe that $C$ admits an obvious crepant resolution $M_0$. By a result of Mori \cite[Thm 3.5.1]{flops}, it therefore suffices to classify all possible flops of $M_0$ \cite[Defn 2.2.1]{flops}. In (i), (ii), and (viii), $M_0 = K_D$ cannot be flopped because $D$ does not contain any contractible curves; see \cite[Defn 2.1.1.2]{flops}. Regarding (iv) and (vii), it is easy to see that in these cases $C$ is terminal, so that the blow-down morphism $M \to C$ would have to be small. But \cite[p.2879, footnote]{Conlon} shows that this is not possible because in these cases $D$ has Picard rank $1$. Finally, in (iii), $M_{0}=\mathcal{O}_{\mathbb{P}^{1}}(-1)^{\oplus 2}$, and in (v) and (vi), $M_{0}=K_{D}$. The possible flops of these resolutions are commensurate with the contractible curves in their exceptional set \cite[Prop 2.1.2]{flops}, with the uniqueness of each flop
guaranteed by \cite[Prop 2.1.6]{flops}. In case (iii), the unique flop of $M_{0}$ is isomorphic to $M_{0}$ itself, whereas in (v) and (vi), we obtain one resp.~three distinct flops with exceptional sets as outlined in the table.
\end{proof}

\subsection{The case of toric cones}\label{ss:examples}

An application of Theorem \ref{existence} allows for a clean way of constructing examples of AC Calabi-Yau manifolds asymptotic to toric Calabi-Yau cones.
Because toric Calabi-Yau cones of dimension at least $4$ are rigid \cite[(6.3)]{altmann}, we only consider the case of dimension $3$. Via the Delzant construction, a toric K\"ahler cone of dimension $3$ can be identified combinatorially with a ``good'' rational polyhedral cone in $\R^{3}$ \cite[Defn 3.1]{cho}. Such cones of ``height $1$'', i.e., those defined by
primitive vectors whose first component may be taken to be $1$ \cite[Defn 3.2]{cho}, describe Gorenstein cones, which are in fact Calabi-Yau cones by the results of \cite{futaki}. A construction of a torus-equivariant semi-universal deformation (with connected fibers) of the Gorenstein cone singularity is then given by Altmann \cite{altmann},
where the torus action on the total space restricts to the action of the Reeb torus on the cone. By Lemma \ref{global}, every deformation of negative $\xi$-weight of the cone must be isomorphic to some fiber of Altmann's family. The following theorem provides a converse to this statement.

\begin{theorem}\label{toric-eg}
Let $C$ be a $3$-dimensional toric Calabi-Yau cone with cone metric $\omega_{0}$ and Reeb vector field $\xi$. Then every fiber $V$ of the Altmann family of $C$ is a deformation of negative $\xi$-weight of $C$, of $\xi$-weight $\leq -3$. Let $\pi:M\to V$ be a K\"ahler crepant resolution. Assume that $\mathfrak{k} \in H^2(M,\R)$ pairs positively with every component of ${\rm Exc}(\pi)$. Then $\mathfrak{k}$ contains a family of {\rm AC} Calabi-Yau metrics $\omega_{g}$ $(g \in (\C^*)^3)$ asymptotic to $g^*\omega_{0}$ under a fixed diffeomorphism independent of $\mathfrak{k},g$.
\end{theorem}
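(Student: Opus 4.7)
The plan is to verify Definition \ref{negativeweight} for each Altmann fiber $V$, with $\xi$-weight at most $-3$, and then to apply Theorem \ref{existence}(3) with $g$ ranging over the complexified Reeb torus $(\C^*)^3\subset\operatorname{Aut}^T(C)$. Once Definition \ref{negativeweight} is established, the statement about the family $\omega_g$ is immediate: Theorem \ref{existence}(3) already outputs a single diffeomorphism $\Phi$ that depends on neither $\mathfrak{k}$ nor $g$, together with an AC Calabi-Yau metric $\omega_g\in\mathfrak{k}$ for every $g\in\operatorname{Aut}^T(C)$, and $(\C^*)^3$ sits in $\operatorname{Aut}^T(C)$ by definition.

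To construct the required approximating data, I would start from Altmann's $\T^3$-equivariant semi-universal family $\mathcal{W}\to S$ \cite{altmann}, fix $s_0\in S$ with $\mathcal{W}_{s_0}\cong V$, and pick rational vectors $\xi_i\to\xi$ in $\operatorname{Lie}(\T^3)$ together with the unique $c_i>0$ such that $-J(c_i\xi_i)$ generates an effective algebraic $\C^*$-action $\sigma_i$ on $C$. Embedding $S$ $\T^3$-equivariantly in affine space with diagonal torus action, decompose $s_0=\sum_\alpha s_0^{(\alpha)}$ into weight vectors. Since $\mathcal{W}\to S$ is a deformation of the apex singularity and $-J\xi$ contracts $C$ to $o$, every weight $\alpha$ appearing in $s_0$ satisfies $\langle\alpha,\xi\rangle>0$, and for $i\gg 0$ the integers $a_{i,\alpha}:=\langle\alpha,c_i\xi_i\rangle$ are positive. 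Setting $\mu_i:=\gcd\{a_{i,\alpha}:s_0^{(\alpha)}\neq 0\}$, the map
\[
\psi_i:\C\to S,\qquad w\mapsto\sum_\alpha w^{a_{i,\alpha}/\mu_i}\,s_0^{(\alpha)},
\]
is $\C^*$-equivariant for the weight-$\mu_i$ action on the source and satisfies $\psi_i(0)=0$, $\psi_i(1)=s_0$. Taking $p_i:W_i\to\C$ to be the pullback of $\mathcal{W}\to S$ along $\psi_i$ with its induced $\C^*$-action then provides an equivariant degeneration with $W_i|_{w=0}\cong C$, $\sigma_i$-action on the central fiber, $W_i|_{w\neq 0}\cong V$, and every orbit limiting to $o$ (since the only $\C^*$-fixed point is $o$ and all weights at $o$ are positive); this verifies items (1)--(4) of Definition \ref{negativeweight}.

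For item (5), observe that $\psi_i(w)\equiv 0\pmod{w^{k_i}}$ where $k_i:=\min_\alpha(a_{i,\alpha}/\mu_i)$, so the base change of $p_i$ to $\operatorname{Spec}\C[w]/(w^{k_i})$ factors through $\{0\}\subset S$ and is therefore the trivial product; hence the vanishing order is at least $k_i$, and
\[
\lambda_i\;=\;-\frac{k_i\mu_i}{c_i}\;\leq\;-\frac{\min_\alpha a_{i,\alpha}}{c_i}\;=\;-\min_\alpha\langle\alpha,\xi_i\rangle\;\xrightarrow[i\to\infty]{}\;-\min_\alpha\langle\alpha,\xi\rangle.
\]
The main obstacle, and the only place where dimension three and the Calabi-Yau hypothesis enter essentially, is the lower bound $\min_\alpha\langle\alpha,\xi\rangle\geq 3$, taken over all $\T^3$-weights occurring in the coordinate ring of Altmann's base $S$. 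This is the toric-CY computation recorded as the assertion $\lambda=-3$ in the open problems section of the present paper: it follows from Altmann's combinatorial description of the semi-universal deformation of a Gorenstein toric threefold cone together with the Calabi-Yau normalization $\mathcal{L}_{r\partial_r}\Omega_0=3\Omega_0$ and the $m_{\mathrm{Gor}}$-homogeneity of $\Omega_0$. Granting this bound, $\limsup_i\lambda_i\leq -3$, and $V$ is a deformation of $\xi$-weight at most $-3$.

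With Definition \ref{negativeweight} in hand, Theorem \ref{existence} applied to $V$ completes the proof: item (1) gives the Gorenstein, isolated-singularity, and trivial-$K_V$ properties; item (2) gives quasi-projectivity of any K\"ahler crepant resolution $\pi:M\to V$; and item (3) supplies, for every class $\mathfrak{k}\in H^2(M,\R)$ that pairs positively with $\operatorname{Exc}(\pi)$ and every $g\in\operatorname{Aut}^T(C)$, an AC Calabi-Yau metric $\omega_g\in\mathfrak{k}$ with $\Phi^*\omega_g$ asymptotic to $g^*\omega_0$ under a single diffeomorphism $\Phi$ independent of $\mathfrak{k}$ and $g$. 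Restricting $g$ to $(\C^*)^3\subset\operatorname{Aut}^T(C)$ yields the stated family.
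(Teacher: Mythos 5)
Your proposal is essentially the paper's proof: verify Definition \ref{negativeweight} using Altmann's torus-equivariant semi-universal family and then quote Theorem \ref{existence}(3), with the family parameter $g$ ranging over $(\C^*)^3\subset\operatorname{Aut}^T(C)$. The paper takes the degeneration $W_i$ to be the restriction of Altmann's family to the $\C^*$-orbit closure of $s_0$ under $-J(c_i\xi_i)$; since by Altmann the \emph{only} $\xi$-weight on $T^1_C$ is $-3$, this orbit closure is a line, $\mu_i=3c_i$, and semi-universality gives vanishing order exactly $k_i=1$, so $\lambda_i=-3$ on the nose. Your reparametrized orbit map $\psi_i$ with $\mu_i=\gcd\{a_{i,\alpha}\}$ reduces to the same thing once all weights are known to be equal, and your upper bound $\lambda_i\leq-\min_\alpha\langle\alpha,\xi_i\rangle$ suffices for the stated conclusion. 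Two caveats. First, your intermediate assertion that $\langle\alpha,\xi\rangle>0$ for every weight appearing in $s_0$ "since $\mathcal{W}\to S$ is a deformation of the apex singularity and $-J\xi$ contracts $C$ to $o$" is not a valid general argument: cones of dimension $\geq 3$ can have deformations of non-negative weight (the paper says so explicitly after Theorem A), so positivity is not soft. It is rescued only by the stronger fact you invoke afterwards, namely that every $\xi$-weight on $T^1_C$ equals $-3$ for a Gorenstein toric threefold cone; the paper pins this to precise statements of Altmann rather than leaving it as "granting this bound," and you should do the same since it is the entire content of the computation. Second, your justification of Definition \ref{negativeweight}(4) (all $\C^*$-orbits of $W_i$ limiting to $o$) from "the only fixed point is $o$ and the weights at $o$ are positive" is a tangent-space statement and does not by itself give global contraction of an affine variety; what one actually needs is that all nonconstant homogeneous coordinates on $W_i$ have positive weight, which again follows from the positivity of the weights on $C$ and on the base, i.e., from the same Altmann input.
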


As remarked after the statement of Theorem \ref{existence}, the complex structure rate $\lambda_2$ of these new AC Calabi-Yau metrics may be taken to be $-3 + \epsilon$ for any $\epsilon>0$, and the metric rate $\lambda_1$ may be taken to be $\lambda_1 = -2 + \epsilon$ if $\mathfrak{k}|_L \neq 0$, where $L$ denotes the link at infinity of $V$, and $\lambda_1 = -3 + \epsilon$ if $\mathfrak{k}|_L = 0$. Clearly, if $V$ is itself smooth, which is of course the generic case, then $M=V$ and $\mathfrak{k} \in H^2(M,\R)$ is arbitrary. Thus, one recovers the main theorem of our previous article \cite{Conlon3} on the smoothing of the irregular cone over ${\rm Bl}_{p,q}\mathbb{P}^2$, with essentially zero effort and with vastly improved decay rates:~in the trivial K\"ahler class, $\mathfrak{k} = 0$, the rate is now seen to be $-3+\epsilon$ as compared to $-0.0128$ in \cite{Conlon3}.

\begin{proof}
Identify the Lie algebra $\mathfrak{t}$ of the maximal compact torus acting on $C$ with $\Z^3 \otimes \R$ by choosing a basis of the weight lattice of $\mathfrak{t}$. As mentioned before Theorem \ref{toric-eg}, this basis can be chosen in such a way that the Delzant polytope of $C$ is generated by vectors in $\Z^3$ whose first coordinate is $1$. Let $\Omega_{0}$ denote the canonical holomorphic volume form on $C$, let $r$ denote the radius function of $\omega_{0}$, and let $J_{0}$ denote the complex structure on $C$. As in \cite[Section 2]{sparky}, the Reeb vector field $\xi$ of $\omega_{0}$ lies in the set $S \subset \mathfrak{t}$ of all Reeb vector fields $\tilde{\xi}$ for which $\mathcal{L}_{\tilde{r}\partial_{\tilde{r}}}\Omega_{0}=3\Omega_{0}$, where $J_{0}(\tilde{r}\partial_{\tilde{r}})=\tilde{\xi}$. By \cite{sparky}, $S$ maps to an open polygon in the plane
$\{(3,x,y)\in\R^3: x,y\in\R\}$. Thus, $\xi=(3,a,b)$ for some $a,b\in\R$.

Choose a sequence of quasi-regular Reeb vector fields $\xi_{i}=(3,a_{i},b_{i})$ in $S$ with $a_{i},b_{i}\in \Q$ such that $\xi_{i}\to\xi$. For each $i$ we obtain a $\C^*$-equivariant degeneration from $V$ to $C$ by considering the flow of $-J(c_i\xi_i)$ on the total space of the Altmann family, where $c_{i}\in\N$ is minimal such that $c_{i}\xi_{i}\in\Z^{3}$. The base of the Altmann family is a subscheme of $T^1_C$ and the torus action induced by equivariance on the base agrees with the torus action on $T^1_C$ induced by the torus action on $C$. The only weight of $\xi_i$ on $T^1_C$ is $-3$ (see \cite[p.168, Theorem (i)]{Alty} or \cite[(2.9)]{altmannn}), so the weight of the induced $\C^*$-action on the base of this $1$-parameter degeneration is $\mu_i = 3c_i > 0$ and Definition \ref{negativeweight}(4) is satisfied. The fact that the Altmann family is semi-universal implies that each of these sub-deformations vanishes to order $k_{i}=1$. Thus, $\lambda_{i}=-k_i\mu_i/c_i=-3$ for all $i$ as in Definition \ref{negativeweight}(5), so that $V$ is seen to be a deformation of negative $\xi$-weight of $C$ in our sense, with $\xi$-weight at most $-3$.

The remainder of Theorem \ref{toric-eg} then follows from Theorem \ref{existence}.
\end{proof}

In the example of \cite{Conlon3} one can see explicitly how this proof works. In this example, $C$ is cut out by $14$ homogeneous quadrics in $\C^8$ \cite[(B.11)]{Conlon3} and $V$ is obtained by adding on linear terms to some of these quadrics \cite[(B.8)]{Conlon3}. The action of $\xi$ on $\C^8$ is diagonal \cite[(4.3)]{Conlon3}, and the $\xi$-weighted degree of the linear smoothing terms is exactly $3$ less than the $\xi$-weighted degree of the main terms.

Returning to Theorem \ref{toric-eg}, if $V$ is singular, the question remains as to which crepant resolutions of $V$, if any, carry a K\"ahler form. $V$ itself is almost never toric but its isolated
singularities do have this property; in fact, they themselves are toric cones \cite[Cor 2.12]{Ilten}. Isolated toric Gorenstein non- terminal $3$-dimensional conical singularities always admit a crepant resolution via iterated blowups \cite[Prop 3.3.15 and 11.4.17]{cox}, and $V$ is quasi-projective by Theorem \ref{existence}(2). Thus, $V$ always admits a quasi-projective partial crepant resolution $V'$ with only terminal isolated toric Gorenstein conical singularities. Such singularities are ordinary double points \cite[Thm 11.4.21(b)]{cox}, and consequently $V'$ admits
small (hence crepant) resolutions. Since $V'$ is $1$-convex, a small resolution of $V'$ is K\"ahler if and only if no positive integral linear combination of the exceptional curves is homologous to zero  \cite{alessandrini}. Thus, $V$ will admit at least one K\"ahler crepant resolution $M$ if and only if this is the case.

\appendix

\renewcommand{\thesection}{\Roman{section}}

\section{Deformation to the normal cone}\label{s:deform-to-nc}

Here we slightly generalize the standard fact that a tubular neighborhood of a  smooth complex hypersurface of a complex space can be degenerated to the total space of the hypersurface's normal bundle. There exist several versions of this ``deformation to the normal cone'' in the literature. We will show that the version of interest to us, which, assuming that the ambient space is compact and the divisor is ample, degenerates the complement of the tube to a singular point compactifying the normal bundle, still works for admissible divisors. Moreover, we clarify the subtle but crucial point that the complex structure of the resulting ``Thom space'' is not entirely determined by the normal bundle itself. This is less standard even in the smooth case but is well-known to experts.

\begin{theorem}\label{thm:deform-tech}
Let $Y$ be a projective variety. Let $D \subset Y$ be an admissible divisor whose associated $\Q$-line bundle $L$ is ample. Let $Y_0$ denote the normal projective variety obtained by contracting the
$\infty$-section of the $\P^1$-orbibundle $\P(N\oplus \C)$, where $N$ is the normal orbibundle to $D$ in $Y$.

{\rm (1)} There exists a test configuration \cite{donaldson, ross-thomas} $p: (\mathcal{Y}, \mathcal{L}) \to \C$ with general fiber $(Y,L)$ such that
\begin{itemize}
\item[$\bullet$] there exists an equivariant holomorphic homeomorphism $F$ from $Y_0$ onto the central fiber $\mathcal{Y}_0$ such that if $v$ is the apex of $Y_0$, then $F|_{Y_0 \setminus \{v\}}$ is a biholomorphism onto its image, and

\item[$\bullet$] $\mathcal{L}$ is the $\Q$-line bundle associated with a $\C^*$-invariant admissible divisor on $\mathcal{Y}$ that intersects the general fiber in $D \subset Y$ and the central fiber in $D = F(\P(0 \oplus \C))$.
\end{itemize}

{\rm (2)} The map $F: Y_0 \to \mathcal{Y}_0$ is the normalization morphism of $\mathcal{Y}_0$. It is an isomorphism if and only if the restriction map $H^0(Y, L^m) \to H^0(D, N^m)$ is surjective for every $m \in \N$.
\end{theorem}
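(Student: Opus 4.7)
The plan is to realize the test configuration via a classical blow-up followed by a birational contraction, carried out carefully in the orbifold category. First, I would form $\mathcal{Y}' := \mathrm{Bl}_{D \times \{0\}}(Y \times \C)$, interpreting this as an orbifold blow-up in a neighborhood of $D$ (where $D$ is a suborbifold by admissibility) and as an ordinary blow-up elsewhere. A standard local computation in charts $(w = su,\, t = s)$ and $(w = s,\, t = sv)$ adapted to a defining function $w$ of $D$ shows that the projection $p' : \mathcal{Y}' \to \C$ is flat with general fiber $Y$; that its central fiber decomposes as $\tilde{Y} \cup E$, with $\tilde{Y} \cong Y$ the proper transform of $Y \times \{0\}$ and $E \cong \P(N \oplus \mathcal{O})$ the exceptional divisor, meeting along the $\infty$-section $\P(N \oplus 0)$; and that the proper transform $\widetilde{\mathcal{D}}$ of $D \times \C$ is disjoint from $\tilde{Y}$ and meets $E$ precisely along the $0$-section $\P(0 \oplus \C)$. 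The scaling $\C^*$-action on $\C$ lifts to $Y \times \C$ trivially in the $Y$ factor, hence to $\mathcal{Y}'$, fixing $\tilde{Y}$, $E$, $\widetilde{\mathcal{D}}$, and the two distinguished sections of $E$.

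To produce $\mathcal{Y}$ I would contract $\tilde{Y}$ using the $\Q$-line bundle $\mathcal{M} := \mathcal{O}_{\mathcal{Y}'}(\widetilde{\mathcal{D}})$. From the relation $[\widetilde{\mathcal{D}}] = P^*[D] - [E]$, where $P : \mathcal{Y}' \to Y$ is the natural map, I would verify that $\mathcal{M}$ restricts to $L$ on every general fiber, to $L \otimes [D]^{-1} = \mathcal{O}$ on $\tilde{Y}$ (using $E|_{\tilde{Y}} = D$), and to $\pi^* N \otimes \mathcal{O}_E(1)$ on $E$, which is trivial on $\P(N \oplus 0)$ and equal to the ample line orbibundle $N$ on $\P(0 \oplus \C)$. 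Ampleness of $L$ on $Y$ and of $N$ on $D$ then gives that $\mathcal{M}$ is $p'$-semi-ample and $p'$-big, so for $m \gg 0$ the bundle $\mathcal{M}^m$ is relatively globally generated and the associated morphism factors through a $\C^*$-equivariant birational contraction $\mathcal{Y}' \to \mathcal{Y}$ over $\C$ collapsing exactly $\tilde{Y}$ together with $\P(N \oplus 0) \subset E$, while $\mathcal{M}$ descends to a relatively ample $\Q$-line bundle $\mathcal{L}$ on $\mathcal{Y}$. The image of $\widetilde{\mathcal{D}}$ then furnishes the desired $\C^*$-invariant admissible divisor $\mathcal{D}$ with $\mathcal{L} = [\mathcal{D}]$ matching the claimed fiber restrictions, and the induced map $F : Y_0 \to \mathcal{Y}_0$ is by construction an equivariant holomorphic homeomorphism that is a biholomorphism away from $v$, completing Part 1.

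For Part 2, observe that $Y_0$ is normal (a projective cone over the normal orbifold $D$ polarized by the ample $N$), so the finite birational morphism $F$ factors through, and equals, the normalization of $\mathcal{Y}_0$. To decide when $F$ is itself an isomorphism I would compare Hilbert functions. The projection formula applied to $\pi : E \to D$ yields
\[ \dim H^0(Y_0, \mathcal{L}_0^m) = \sum_{j=0}^{m} \dim H^0(D, N^j), \]
while the filtration of $H^0(Y, mL)$ by vanishing order along $D$ has graded pieces canonically injecting into $H^0(D, jN)$ for $j = 0, \ldots, m$, with every such injection an equality precisely when the restriction map $H^0(Y, mL) \to H^0(D, mN)$ is surjective for every $m$. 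Flatness of $p$ gives $\dim H^0(\mathcal{Y}_0, \mathcal{L}|_{\mathcal{Y}_0}^m) = \dim H^0(Y, mL)$, while $F$ being the normalization gives a natural inclusion $H^0(\mathcal{Y}_0, \mathcal{L}|_{\mathcal{Y}_0}^m) \hookrightarrow H^0(Y_0, \mathcal{L}_0^m)$. These dimensions coincide for every $m$ exactly when the stated surjectivity holds, and equality of the inclusion for every $m$ is exactly the condition for $F$ to be an isomorphism.

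The main technical obstacle is maintaining the orbifold structure throughout: $E$ is only a $\P^1$-orbibundle over the orbifold $D$, $\mathcal{M}$ is only $\Q$-Cartier, and the relative Proj/Stein factorization producing $\mathcal{Y}$ must be executed in the orbifold category in a neighborhood of $E$, reducing to standard algebraic geometry away from $D$. A subsidiary subtlety is that $\mathcal{Y}_0$ may fail to be reduced or normal at the apex $v$ precisely when the surjectivity of Part 2 fails, and disentangling this scheme-theoretic defect from the set-theoretic homeomorphism $F$ is exactly what the Hilbert function comparison above accomplishes.
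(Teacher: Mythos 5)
Your construction of $\mathcal{Y}'=\mathrm{Bl}_{D\times\{0\}}(Y\times\C)$, the identification of its central fiber as $\tilde Y\cup E$ with $E=\P(N\oplus\C)$ meeting $\tilde Y$ along the $\infty$-section, and the location of $\widetilde{\mathcal{D}}$ all match the paper's Step 1, and your line-bundle computation $[\widetilde{\mathcal{D}}]=P^*[D]-[E]$ is correct. Where you differ is in how the contraction of $\tilde Y$ is produced: you invoke relative semi-ampleness of the $\Q$-line bundle $\mathcal{O}(\widetilde{\mathcal{D}})$ and a relative Proj, whereas the paper re-identifies $\hat{\mathcal{Y}}$ as the blowup of $\P(L\oplus\C)$ along $D\subset\P(0\oplus\C)$, so that $\tilde Y$ becomes the preimage of the $\infty$-section $\P(L\oplus 0)$, and then contracts it analytically by Grauert's criterion using only positivity of $L$. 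Your route can be made to work, but note that relative semi-ampleness over the reducible central fiber is not free: you must lift sections of $\mathcal{M}^m$ from $\tilde Y\cup E$ (glued along the $\infty$-section) to a neighborhood in $\mathcal{Y}'$, which requires a vanishing or direct-image argument you have not supplied. The Grauert route avoids this entirely.

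The genuine gap is in Part 2, in the step ``flatness of $p$ gives $\dim H^0(\mathcal{Y}_0,\mathcal{L}_0^m)=\dim H^0(Y,mL)$.'' Flatness gives constancy of the Euler characteristic $\chi(\mathcal{Y}_t,\mathcal{L}_t^m)$ and only \emph{upper semicontinuity} of $h^0$ at $t=0$; since your criterion must hold for \emph{every} $m$ (not just $m\gg0$, where relative Serre vanishing would apply), you cannot rule out jumping of $h^0$ on the central fiber for small $m$, and any such jump would break the equivalence you are trying to establish (you could have $h^0(\mathcal{Y}_0,\mathcal{L}_0^m)=h^0(Y_0,F^*\mathcal{L}_0^m)$ without the restriction maps being surjective). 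The missing input is the identification of $H^0(\mathcal{Y}_0,\mathcal{L}_0^m)$ with the associated graded of the vanishing-order filtration on $H^0(Y,mL)$, i.e.\ the Rees-algebra description of the central fiber. The paper sidesteps the dimension count altogether by writing down the coordinate rings directly: $\mathcal{Y}_0\setminus D$ is the image of $N^*=L^*|_D$ in the affine cone $\mathrm{Spec}\,\bigoplus_m H^0(Y,L^m)$, hence has coordinate ring $\bigoplus_m \mathrm{im}\bigl(H^0(Y,L^m)\to H^0(D,N^m)\bigr)$, while $Y_0\setminus D=\mathrm{Spec}\,\bigoplus_m H^0(D,N^m)$; the inclusion of rings is an isomorphism exactly when every restriction map is surjective, and Rossi's theorem (the analytic Zariski main theorem) then identifies $F$ with the normalization. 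I recommend replacing your Hilbert-function comparison with this direct ring comparison; your observation that $Y_0$ is normal and $F$ is finite and bijective is the right way to see that $F$ is the normalization morphism.
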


\begin{remark}
As is implicit in the definition of a test configuration, the $\C^*$-action on $\mathcal{Y}$ covers the standard $\C^*$-action of weight $1$ on the base $\C$.
\end{remark}

\begin{proof}[Proof of Theorem \ref{thm:deform-tech}] We begin by recalling the well-known general construction that leads to (1). Let $\hat{p}: \hat{\mathcal{Y}} \to \P^1$ be the deformation to the normal cone \cite[\S 5.1]{Fulton} associated with $(Y,D)$. Thus, $\hat{\mathcal{Y}}$ is the blowup of $\P^1 \times Y$ in $\{0\} \times D$, and $\hat{p}$ is the induced projection onto $\P^1$. Then:
\begin{itemize}
\item[$\bullet$] $\hat{p}$ is a flat projective morphism.

\item[$\bullet$] All fibers of $\hat{p}$ except for the central one, $\hat{\mathcal{Y}}_0 = \hat{p}^{-1}(0) =Y \cup \mathbb{P}(N \oplus \C)$, are isomorphic to $Y$.

\item[$\bullet$] The two components of $\hat{\mathcal{Y}}_0$ intersect along $D = \mathbb{P}(N \oplus 0) \subset \P(N\oplus\C)$.

\item[$\bullet$] $\hat{p}$ is equivariant with respect to the natural $\C^*$-action on $\P^1$ and its lift to $\hat{\mathcal{Y}}$.

\item[$\bullet$] The strict transform of $\P^1 \times D$ defines a $\C^*$-invariant admissible divisor $\hat{\mathcal{D}}$.

\item[$\bullet$] $\hat{\mathcal{D}}$ intersects the general fiber in $D \subset Y$ and the central fiber in $D = \P(0 \oplus \C) \subset \P(N \oplus \C)$.
\end{itemize}

In order to construct the desired test configuration $p: \mathcal{Y} \to \C$, in addition to removing the fiber $\hat{\mathcal{Y}}_\infty = \hat{p}^{-1}(\infty)$ from $\hat{\mathcal{Y}}$, we also need to contract the component $Y \subset \hat{\mathcal{Y}}_0$ to a point. This is a local process taking place in a small analytic neighborhood of this component, and we use here that $L$ is ample. To see that $Y$ can indeed be contracted, it is helpful to realize that $\hat{\mathcal{Y}}$ can also be written as the blowup of $\mathbb{P}(L \oplus \C)$ in $D \subset Y = \mathbb{P}(0 \oplus \C)$, with exceptional divisor $\hat{\mathcal{D}} = \mathbb{P}(N \oplus N)$. Then $Y \subset \hat{\mathcal{Y}}_0$ equals the preimage of the $\infty$-section $\mathbb{P}(L \oplus 0)$, which can be contracted precisely because $L$ is positive \cite[p.340, Satz 5]{Grau:62}. Item (1) of the theorem is clear now.

To prove item (2), it suffices to compare the coordinate rings of the two affine algebraic varieties $\mathcal{Y}_0 \setminus D$ and $Y_0 \setminus D$. By construction, $\mathcal{Y}_0 \setminus D$ is the image of $N^* = L^*|_D$ under the contraction map $L^* \to (L^*)^\times = {\rm Spec}\, \bigoplus_{m \in \N_0} H^0(Y, L^m)$, whereas $Y_0 \setminus D = {\rm Spec} \,\bigoplus_{m \in \N_0} H^0(D, N^m)$ \cite[p.177, \S 8.8]{EGA2}. This yields a morphism $Y_0 \setminus D \to \mathcal{Y}_0 \setminus D$, which is an isomorphism if and only if the restriction map $H^0(Y,L^m) \to H^0(D, N^m)$ is surjective for every $m \in \N$. The underlying map of topological spaces is clearly equal to the homeomorphism $F$ of item (1), which is a biholomorphism away from $v$. Since $Y_0 \setminus D$ is normal, \cite[Thm 6.6]{Rossi2} now tells us that $F$ must be the normalization of $\mathcal{Y}_0 \setminus D$.
\end{proof}

\begin{example}\label{r:chili}
We are grateful to C.~Li for the following example, which shows that $\mathcal{Y}_0$ need not be isomorphic to $Y_0$. Let $Y$ be a smooth Riemann surface. Let $D$ be a point on $Y$. Then $Y_0 = \P^1$. However, if $\mathcal{Y}_0 = \P^1$, then $Y = \P^1$ because the arithmetic genus is constant in flat families.
\end{example}

\begin{example}\label{r:chili2}
We can apply Proposition \ref{normalform} to construct a more explicit example; cf.~Remark \ref{r:chili3}. The notation in this example will be analogous to the notation in the proof of Proposition \ref{normalform}. We apply the proposition with $W = (z_1^3 = z_2^2 + tz_2) \subset \C_t \times \C^2_{z_1,z_2}$, with weights $(\mu,\mu_1,\mu_2) = (3,2,3)$. Then the central fiber $C$ is the cuspidal cubic $z_1^3 = z_2^2$ in $\C^2$, which is irreducible but not normal, whereas the fibers for $t \neq 0$ are smooth. Compactifying $V = W \cap \{t = 1\}$ in $\P(3,2,3,1)$, we obtain a smooth elliptic curve $\overline{V}$. To see this, we set up an orbifold chart $$\C^3 \to \C^3/\Z_{2} \hookrightarrow \P(3,2,3,1), \;\, (\tilde\tau, \tilde{\zeta}_2, \tilde{w}) \mapsto [\tilde\tau,1,\tilde\zeta_2,\tilde{w}],$$
where $-1 \in \Z_2$ acts via $-{\rm Id}_{\C^3}$. Then $\overline{V}$ lifts to the $\Z_2$-invariant curve $\tilde\zeta_2^2 + \tilde\tau\tilde\zeta_2 = 1$, $\tilde\tau = \tilde{w}^3$, which is smooth at the preimage $\tilde{x} = (0,1,0)$ of the compactifying point $D = [0, 1, 1,0]$. The stabilizer $\Gamma$ of $\tilde{x}$ in $\Z_2$ is trivial, so the orbifold structure at $D$ is trivial and $\overline{V}$ is smooth as desired. We now read from Proposition \ref{normalform}(3) that after a base change $t = s^3$,  the family $W$ turns into the deformation of $Y = \overline{V}$ to the normal cone of $D$ with $\hat{\mathcal{D}}$ removed. Thus, $Y_0 = \mathbb{P}^1$, but $\mathcal{Y}_0$ is a rational curve with a cusp, and $F: Y_0 \to \mathcal{Y}_0$ is a holomorphic homeomorphism but not a biholomorphism.
\end{example}

See \cite{HRS} for some concrete examples of Theorem \ref{thm:deform-tech} with nontrivial orbifold points on $D$.

\begin{remark}
Consider again the smooth elliptic curve degenerating to a cusp of Example \ref{r:chili2}. It is instructive to see why the base change $t = s^3$ is necessary to recover the deformation to the normal cone. Of course, the $\C^*$-weight on the base must be equal to $1$, but there is a more interesting test: the fiber over $\infty$ must be isomorphic to the original variety $Y$. Before performing the base change $t = s^3$ in Example \ref{r:chili2}, the fiber over $t = \infty$ is the curve $\zeta_1^3 = \zeta_2^2 + \tau\zeta_2$  in $\P(3,2,3)$. Except for the compactifying point $D = [0,1,1]$, this curve is contained in the image of the orbifold chart
$$\C^2 \to \C^2/\Z_3 \hookrightarrow \P(3,2,3),\;\, (\tilde{\zeta}_1, \tilde{\zeta}_2) \mapsto [1,\tilde{\zeta}_1, \tilde{\zeta}_2],$$
where $\xi \in \Z_3$ acts on $\C^2$ via ${\rm diag}(\xi^2, 1)$. In this chart our curve lifts to $(\tilde{\zeta}_1^3 = \tilde\zeta_2^2 + \tilde\zeta_2) = Y \setminus D$, but this means that the fiber over $t=\infty$ is not $Y$ but rather a $\Z_3$-quotient of $Y$. The effect of the base change $t = s^3$ is precisely to undo this quotient map. Note that $\C^2/\Z_3 \cong \C^2$ via $(\tilde{\zeta}_1, \tilde{\zeta}_2) \mapsto (\tilde{\zeta}_1^3, \tilde{\zeta}_2)$, identifying $Y/\Z_3$ with a conic. Moreover, the cover $Y \to Y/\Z_3$ has exactly three branch points, all of order $3$; on $\tilde\zeta_1^3 = \tilde\zeta_2^2 + \tilde\zeta_2$ these are the points $(0,0)$ and $(0,-1)$ and the point at infinity.
\end{remark}

\section{Type I deformations of K\"ahler cones}\label{s:type1}

Consider a K\"ahler cone $C$ with radius function $r$, link $L$, cone metric $g_C = dr^2 \oplus  r^2 g_L$, parallel complex structure $J$, and Reeb vector field $\xi = J(r \partial_r)$. This structure can be equivalently encoded in terms of a \emph{Sasaki structure} $(\Phi, \xi, \eta, g_L)$ on $L$, where $\eta$ is a $1$-form on $L$ given by $\eta(X) = g_L(\xi,X)$ and $\Phi$ is an endomorphism of $TL$ given by $\Phi(X) = J(X - \eta(X)\xi)$. Here we will not write down the compatibility conditions between these data that allow one to construct a K\"ahler cone structure on $\R^+ \times L$ from a quadruple of the form $(\Phi,\xi,\eta,g_L)$. However, using these compatibility conditions, Takahashi proved the following remarkable theorem \cite[Thm A]{tak}.

\begin{theorem}[Takahashi]\label{taka}
Let $(L, \Phi,\xi,\eta,g_L)$ be a Sasaki manifold. Let $\tilde\xi$ be a vector field on $L$ that preserves the tensors $\Phi, \xi, \eta, g_L$ and that satisfies $\eta(\tilde{\xi}) = g_L(\xi,\tilde\xi) > 0$ pointwise on $L$. Then a new Sasaki structure on $L$ with Reeb vector field $\tilde\xi$ can be defined by setting
\begin{align}
\tilde{\eta}(X) &:=\eta(\tilde{\xi})^{-1} \eta(X),\\
\tilde{\Phi}(X) &:= \Phi(X - \tilde{\eta}(X)\tilde{\xi}),\\
\tilde{g}_L(X,Y) &:= \eta(\tilde{\xi})^{-1}g_L(X-\tilde{\eta}(X)\tilde{\xi},Y-\tilde{\eta}(Y)\tilde{\xi}) + \tilde{\eta}(X)\tilde{\eta}(Y).
\end{align}
\end{theorem}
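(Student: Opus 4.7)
The plan is to reduce everything to the K\"ahler cone $(C = \R^+ \times L, g_C, J)$ associated with the given Sasaki structure, using the standard equivalence that an almost-contact metric tuple on $L$ is Sasaki if and only if its associated cone metric is K\"ahler with respect to the complex structure $J$ determined by $J(r\partial_r) = \xi$ and $J|_{\ker\eta} = \Phi$. The hypothesis that $\tilde{\xi}$ preserves $(\Phi, \xi, \eta, g_L)$ is precisely the statement that the unique extension of $\tilde\xi$ to $C$ commuting with $r\partial_r$ is a holomorphic Killing vector field on $(C, g_C, J)$.

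Next I would construct a new K\"ahler cone structure on the same complex manifold $(C, J)$ whose scaling vector field is $Z := -J\tilde\xi$. Decomposing $\tilde\xi = (\tilde\xi - \eta(\tilde\xi)\xi) + \eta(\tilde\xi)\xi$ along $L$, and using $J\xi = -r\partial_r$, one computes $Z|_L = -\Phi(\tilde\xi) + \eta(\tilde\xi)\,r\partial_r$, so the hypothesis $\eta(\tilde\xi) > 0$ ensures that $Z$ has strictly positive radial component; combined with $[Z, r\partial_r] = 0$, this guarantees that $Z$ integrates to a free $\R^+$-action on $C$. Consequently there exists a smooth positive function $\tilde r$ on $C$, unique up to a multiplicative normalization, with $Z = \tilde r \partial_{\tilde r}$. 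A direct computation (of the type in Martelli--Sparks--Yau) shows that $\tilde\omega := \frac{i}{2}\partial\bar{\partial}\tilde r^2$ is a positive $(1,1)$-form and that the associated Riemannian metric $\tilde g_C := \tilde\omega(\cdot, J\cdot)$ is of cone type with radius function $\tilde r$; by construction $J(\tilde r\partial_{\tilde r}) = \tilde\xi$, so the new link $\tilde L := \{\tilde r = 1\}$ inherits a Sasaki structure with Reeb vector field $\tilde\xi$.

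It then remains to identify this Sasaki structure on $\tilde L$ with the explicit tuple $(\tilde\Phi, \tilde\xi, \tilde\eta, \tilde g_L)$ defined in the statement, which lives on the original link $L$ rather than on $\tilde L$. The two links are diffeomorphic through the radial flow of $Z$, and pulling back the new Sasaki data along this diffeomorphism and computing in the decomposition $X = (X - \tilde\eta(X)\tilde\xi) + \tilde\eta(X)\tilde\xi$ yields exactly the stated formulas. The factor $\eta(\tilde\xi)^{-1}$ in $\tilde\eta$ normalizes $\tilde\xi$ to be the Reeb field of the new contact $1$-form, $\tilde\Phi$ records the action of $J$ on the new contact subbundle $\ker\tilde\eta \subset TL$, and the rescaling in $\tilde g_L$ reflects the fact that on $L$ the new radial coordinate no longer takes the value $1$.

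The main obstacle I expect is the bookkeeping in the final identification step: one must carefully separate the transverse and Reeb directions for the new structure inside $TL$, account for the discrepancy between the radial projection $L \to \tilde L$ and the identity of $C = \R^+ \times L$, and invoke the K\"ahlerness of $(\tilde g_C, J)$ to conclude the CR integrability of $(\ker\tilde\eta, \tilde\Phi)$ and the Killing property of $\tilde\xi$ with respect to $\tilde g_L$. As a useful sanity check during the computation, the purely algebraic identities among $\tilde\Phi, \tilde\xi, \tilde\eta, \tilde g_L$ (such as $\tilde\Phi(\tilde\xi) = 0$, $\tilde\eta\circ\tilde\Phi = 0$, $\tilde\Phi^2 = -\mathrm{Id} + \tilde\xi \otimes \tilde\eta$, and $\tilde g_L(\tilde\Phi X, \tilde\Phi Y) = \tilde g_L(X, Y) - \tilde\eta(X)\tilde\eta(Y)$) can be verified directly from the stated formulas and the corresponding identities for $(\Phi, \xi, \eta, g_L)$, independently of the cone argument.
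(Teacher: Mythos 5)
The paper does not actually prove this statement: it quotes it as \cite[Thm A]{tak}, where Takahashi's argument is a direct tensorial verification of the Sasaki compatibility conditions for the tuple $(\tilde\Phi,\tilde\xi,\tilde\eta,\tilde g_L)$ on $L$. Your cone-theoretic route is therefore genuinely different, and most of your reductions are sound: the formula $Z|_L=-\Phi(\tilde\xi)+\eta(\tilde\xi)r\partial_r$ is correct, compactness of $L$ gives a uniform positive lower bound on $\eta(\tilde\xi)$ and hence completeness and properness of the flow of $Z$, and (with the right normalization of $\tilde r$) the computation $d^c\log\tilde r|_{TL}=\eta/\eta(\tilde\xi)$ does reproduce the stated $\tilde\eta$. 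However, the decisive step is exactly the one you dismiss as ``a direct computation of the type in Martelli--Sparks--Yau'': the positivity of $\tilde\omega=\tfrac{i}{2}\partial\overline\partial\tilde r^2$ on $(C,J)$. That positivity \emph{is} the content of the theorem (it is equivalent to the new almost-contact metric structure being Sasaki rather than merely almost contact metric), and the natural shortcut --- transporting positivity through the flow map $\Psi(r,x)=\Psi^{\log r}(1,x)$ identifying $(C,J)$ with the product cone over the new link --- is circular: the identity $\Psi^*J=\tilde J$ is the paper's Proposition \ref{p:typeIbiholo}, which is proved \emph{after} Theorem \ref{taka} is taken as known, because $\tilde J$ is only defined once one knows $(\tilde\Phi,\tilde\xi,\tilde\eta,\tilde g_L)$ is Sasaki. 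A non-circular positivity argument does exist (e.g.\ writing $\tilde\omega=\tilde r^2\bigl(dv\wedge d^cv+\tfrac12 dd^cv\bigr)$ with $v=\log\tilde r$ and comparing $\tfrac12 dd^cv$ with the original transverse K\"ahler form), but carrying it out is essentially of the same length and nature as Takahashi's direct check on $L$, so as written your proposal defers the main point rather than proving it.

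A secondary but consequential imprecision: the equation $Z\tilde r=\tilde r$ does \emph{not} determine $\tilde r$ up to a multiplicative constant --- its solutions form a torsor over the positive $Z$-invariant functions, i.e.\ over $C^\infty(L,\R^+)$. To obtain a cone metric with scaling field $Z$ one also needs $\tilde\xi\tilde r=0$, and to obtain \emph{exactly} the stated formulas one must take the specific solution with $\tilde r\equiv 1$ on $L$ (for which $\tilde\xi\tilde r=0$ holds automatically, and for which $\{\tilde r=1\}=L$, so the ``radial projection $L\to\tilde L$'' you worry about in the last step is the identity). Different normalizations of $\tilde r$ yield Sasaki structures differing from the stated one by a transverse conformal factor, so pinning this down is not optional bookkeeping. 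Your closing algebraic sanity checks on $(\tilde\Phi,\tilde\xi,\tilde\eta,\tilde g_L)$ are correct, but they only confirm the almost-contact-metric identities, not the normality/integrability that distinguishes a Sasaki structure.
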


This theorem is useful in complex geometry for two reasons. First, if $L$ is irregular as a Sasaki manifold, i.e., $\xi$ generates a torus $\T$ of isometries of dimension $> 1$, then every element $\tilde\xi \in {\rm Lie}(\T)$ sufficiently close to $\xi$ satisfies the hypotheses of the theorem. Second, the K\"ahler cone defined by the Sasaki structure with Reeb vector field $\tilde{\xi}$ is actually biholomorphic to the K\"ahler cone defined by the Sasaki structure with Reeb vector field $\xi$. This is a well-known folklore observation. In our next proposition, we prove this fact as well as a useful estimate for the biholomorphism.

\begin{prop}\label{p:typeIbiholo}
Let $L$ be compact without boundary. Let $(g_C,J)$ and $(\tilde{g}_C,\tilde{J})$ be the K\"ahler cone structures on $C = \R^+ \times L$ associated with the Sasaki structures $(\Phi,\xi,\eta,g_L )$ and $(\tilde\Phi,\tilde\xi,\tilde\eta,\tilde{g}_L)$ on $L$. Extend $\tilde{\xi}$ from $L$ to $C$ by scale invariance. Let $\Psi^t$ denote the time-$t$ flow of the vector field $-J\tilde{\xi}$ on $C$. Define a proper diffeomorphism $\Psi: C \to C$ by setting
\begin{align}
\Psi(r,x) = \Psi^{\log r}(1,x).\end{align} Then $\Psi$ preserves the vector field $\tilde\xi$ and satisfies $\Psi^*J = \tilde{J}$.
Moreover, for all $r > 0$,
\begin{align}\label{lalilu}
\min\{r^{\lambda_1}, r^{\lambda_2}\} \leq r \circ \Psi \leq \max\{r^{\lambda_1},r^{\lambda_2}\},
\end{align}
where $\lambda_1 = \min_L g_L(\xi,\tilde{\xi})$ and $\lambda_2 = \max_L g_L(\xi, \tilde{\xi})$.
\end{prop}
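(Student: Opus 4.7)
The plan is to verify the three claims by directly unpacking the formula $\Psi(r, x) = \Psi^{\log r}(1, x)$. First I would note that $\tilde\xi$, extended by scale invariance, is a Killing field on $(C, g_C)$ that also preserves $J$: this follows from the Sasaki--K\"ahler cone correspondence applied to $\tilde\xi$ preserving the tensors $(\Phi, \xi, \eta, g_L)$ together with $[\tilde\xi, r\partial_r] = 0$. Hence $-J\tilde\xi$ is a real-holomorphic vector field on $(C, J)$, the maps $\Psi^t$ are biholomorphisms, and $[\tilde\xi, -J\tilde\xi] = 0$ yields $(\Psi^t)_*\tilde\xi = \tilde\xi$. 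Differentiating $\Psi$ gives $d\Psi(r\partial_r) = -J\tilde\xi|_{\Psi(r,x)}$ and $d\Psi|_{T_xL} = (\Psi^{\log r})_*|_{T_xL}$; in particular $d\Psi(\tilde\xi) = \tilde\xi$, settling the first claim.

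For $\Psi^*J = \tilde J$, I would check $d\Psi \circ \tilde J = J \circ d\Psi$ on the decomposition $T_{(r,x)}C = \R\langle r\partial_r\rangle \oplus \R\langle \tilde\xi\rangle \oplus D$, where $D$ is the common horizontal distribution $\ker\eta = \ker\tilde\eta$; these kernels coincide because $\tilde\eta$ is a positive scalar multiple of $\eta$. On $r\partial_r$: $\tilde J(r\partial_r) = \tilde\xi$ maps to $\tilde\xi$, which equals $J(-J\tilde\xi)$. On $\tilde\xi$: $\tilde J\tilde\xi = -r\partial_r$ maps under $d\Psi$ to $J\tilde\xi$, matching $J(d\Psi(\tilde\xi)) = J\tilde\xi$. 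On $D$, Takahashi's formula reduces to $\tilde\Phi|_D = \Phi|_D$, so $\tilde J|_D = J|_D$, and the intertwining is immediate from holomorphicity of each $\Psi^{\log r}$ with respect to $J$.

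For the estimate, I would write $\tilde\xi = \eta(\tilde\xi)\xi + \tilde\xi^H$ with $\tilde\xi^H \in D$, obtaining $-J\tilde\xi = \eta(\tilde\xi)(r\partial_r) - \Phi\tilde\xi^H$ and hence $dr(-J\tilde\xi) = r\,\eta(\tilde\xi)$. Setting $\rho(t) := r(\Psi^t(1, x))$, this becomes $\rho'(t) = \eta(\tilde\xi)(\cdot)\rho(t)$ with $\rho(0) = 1$, so $\rho(t) = \exp\int_0^t \eta(\tilde\xi)$, and the bound $\lambda_1 \leq \eta(\tilde\xi) = g_L(\xi,\tilde\xi) \leq \lambda_2$ together with $t = \log r$ yields \eqref{lalilu}. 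Properness and bijectivity then fall out: $d\Psi$ is visibly invertible, $\Psi$ restricts to the identity on $\{r = 1\}$, and since $\lambda_1, \lambda_2 > 0$ the estimate forces $\Psi^{-1}(\{r = 1\}) = \{r = 1\}$, making $\Psi$ a degree-one proper covering of $C$ onto itself. The main obstacle I anticipate is the intertwining on the horizontal distribution, but the right observations -- that $\ker\eta = \ker\tilde\eta$ and $\tilde\Phi|_D = \Phi|_D$ -- reduce it to holomorphicity of $\Psi^{\log r}$ with respect to the single complex structure $J$.
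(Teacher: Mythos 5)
Your proof is correct and follows essentially the same route as the paper: the identities $d\Psi(r\partial_r) = -J\tilde\xi$ and $d\Psi(\tilde\xi) = \tilde\xi$, the pointwise check of $J\circ d\Psi = d\Psi\circ\tilde J$ on the splitting $\R\langle r\partial_r\rangle\oplus\R\langle\tilde\xi\rangle\oplus\ker\eta$, and the ODE $\rho' = g_L(\xi,\tilde\xi)\rho$ for the radius along flow lines are exactly the paper's ingredients. The only organizational difference is that the paper first proves $\Psi^*J$ is scale-invariant via $\Psi\circ S_\lambda = \Psi^{\log\lambda}\circ\Psi$ and then verifies the identity only at $r=1$ (where $\Psi$ is the identity on the link), whereas you verify it at all points using the $J$-holomorphy of $\Psi^{\log r}$; both are fine.
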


\begin{proof}
For the first property, fix $(r,x) \in C$. Let $(r_s,x_s) \in C$ denote the integral curve of $\tilde\xi$ through $(r,x)$. Then $r_s = r$ because $\tilde\xi$ is scale-invariant and tangent to the slices of the cone, and hence
$$\Psi_*(\tilde\xi|_{(r,x)}) = \frac{d}{ds}\biggr|_0 \Psi^{\log r_s}(1,x_s) = \Psi^{\log r}_*(\tilde\xi|_{(1,x)}) = \tilde{\xi}.$$
The last equality holds because $\Psi^t$ is the flow of $-J\tilde\xi$, and $[-J\tilde\xi,\tilde{\xi}]=0$ because $\tilde\xi$ is $J$-holomorphic. Indeed, both $J$ and $\tilde\xi$ are scale-invariant, and $\tilde\xi$ preserves $\Phi,\xi,\eta$ on the link $L = \{r = 1\}$.

Next, for $\lambda \in \R^+$, let $S_\lambda: C \to C$ denote the scaling map $(r,x) \mapsto (\lambda r, x)$. Then clearly
\begin{equation}\label{e:kommutator}
\Psi \circ S_\lambda = \Psi^{\log\lambda} \circ \Psi.
\end{equation}
Using the fact noted above that $\tilde\xi$, hence $-J\tilde\xi$, is $J$-holomorphic, this implies that
$$
S_\lambda^* \Psi^*J  =  \Psi^*(\Psi^{\log\lambda})^*J = \Psi^*J.
$$
Thus, $\Psi^*J$ is scale-invariant as well. It therefore suffices to prove the equality $\Psi^*J = \tilde{J}$ at $r = 1$. For this we need to prove that $J\Psi_*X = \Psi_*\tilde{J}X$ for all vectors $X \in TC$ at $r = 1$. If $X \in TL \subset TC$, then $\Psi_*X = X$. Thus, if $\tilde{J}X \in TL$ as well, which is equivalent to $X \in \ker\tilde\eta = \ker\eta$, then it suffices to note that $JX = \tilde{J}X$ by definition. It remains to check the cases $X = \tilde\xi$ and $X  = \partial_r$. To do so, first note that taking $\frac{d}{d\lambda}|_{\lambda = 1}$ of the identity \eqref{e:kommutator} at $r = 1$ yields that
$$\Psi_*\partial_r = -J\tilde\xi\;\,\text{at}\;\,r=1.$$ Using this, we easily obtain the desired equalities
\begin{align*}
J\Psi_*\tilde\xi = J\tilde\xi = -\Psi_*\partial_r = \Psi_* \tilde{J}\tilde\xi \;\,\text{and}\;\, J\Psi_*\partial_r = \tilde\xi = \Psi_*\tilde\xi = \Psi_*\tilde{J}\partial_r\;\,\text{at}\;\,r=1.
\end{align*}

Finally, to prove the inequalities stated in \eqref{lalilu}, write $\Psi^t(1,x) = (r_t, x_t)$, so that $r \circ \Psi = r_t$ for $t = \log r$. Then, because the flow $\Psi^t$ is generated by the vector field $-J\tilde{\xi}$,
$$\frac{dr_t}{dt} = g_C( -J\tilde\xi|_{(r_t,x_t)}, \partial_r) = g_L( \tilde\xi|_{x_t}, \xi|_{x_t}) r_t,$$
and hence $e^{\lambda_1 t} \leq r_t \leq e^{\lambda_2 t}$ for $t \geq 0$ and  $e^{\lambda_2 t} \leq r_t \leq e^{\lambda_1 t}$ for $t \leq 0$.
\end{proof}

\begin{remark} Using the fact that $\xi,\tilde\xi$ are commuting Killing fields for $g_L$, one checks that
$$X(g_L(\xi,\tilde{\xi})) = -2g_L(\nabla^{g_L}_{\tilde{\xi}}\xi, X) = -2g_C(J\nabla^{g_C}_{\tilde{\xi}}\partial_r,X) = -2g_L(\Phi\tilde\xi, X)$$
for all $X \in TL$. This shows that the above biholomorphism $\Psi: (C,\tilde{J}) \to (C,{J})$ is an isometry with respect to $\tilde{g}_C$ and $g_C$ if and only if $\tilde{\xi} = \xi$. In fact, already for $\tilde{\xi} = \lambda \xi$  $(\lambda \neq 1)$, the links $(L,\tilde{g}_L)$ and $(L, g_L)$ are not isometric at all, with $(L,\tilde{g}_L)$ being a nontrivial Berger deformation of $(L,g_L)$.
\end{remark}

Proposition \ref{p:typeIbiholo} shows that if we deform a K\"ahler cone as in Theorem \ref{taka} (a ``Type I'' deformation of Sasaki structures), then by applying a diffeomorphism we can arrange that the complex structure stays fixed, but this diffeomorphism will distort the radius function in a polynomial manner. So if the pointwise value of a scalar function on the underlying complex manifold decays polynomially as the original radius goes to infinity, it will also decay polynomially in terms of the deformed radius. However, an analogous statement for the lengths of \emph{tensor fields} with respect to the deformed cone metric (e.g., for the \emph{derivatives} of a scalar function) does not follow from Proposition \ref{p:typeIbiholo}, and work in our previous paper \cite{Conlon3} shows that such a statement can actually be false unless $\tilde{\xi}$ is much closer to $\xi$ than Theorem \ref{taka} requires. In fact, in the example studied in \cite{Conlon3}, the main difficulty was to find a neighborhood $U$ of $\xi$ in ${\rm Lie}(\T)$ large enough such that a certain interesting vector $\tilde{\xi}$ lies in $U$, but small enough such that for $\tilde{\xi}\in U$, certain tensors known to decay with respect to the deformed cone metric also decay with respect to the original one. In this paper, thanks to Li's work \cite{ChiLi}, we only need to consider this issue for arbitrarily small deformations of $\xi$, which is easier.

To state the result we need, let $(g_C, J)$ be a fixed K\"ahler cone structure on $C = \R^+ \times L$ as above, with Reeb vector field $\xi$ and Reeb torus $\T$. Then every $\tilde\xi \in {\rm Lie}(\T)$ with $g_L(\xi,\tilde{\xi})>0$ gives rise to a deformed K\"ahler cone structure $(\tilde{g}_C,\tilde{J})$ as in Theorem \ref{taka}, and we have a diffeomorphism $\Psi$ given by Proposition \ref{p:typeIbiholo} such that $\Psi^*J = \tilde{J}$. Define $\tilde{g}_C' := (\Psi^{-1})^*\tilde{g}_C$. Then $\tilde{g}_C'$ is a cone metric, K\"ahler with respect to $J$, with radius function $\tilde{r}' := r \circ \Psi^{-1}$ and with Reeb vector field $\Psi_*\tilde\xi = \tilde\xi$. Note that by \eqref{lalilu}, the diffeomorphisms $\Psi, \Psi^{-1}$ preserve each of the regions $\{r\geq 1\}$ and $\{r \leq 1\}$.

\begin{theorem}\label{p:typeIest}
For all $K \in \N$ and $\epsilon\in (0,1)$ there exists a neighborhood $U_{K,\epsilon}$ of $\xi$ in ${\rm Lie}(\T)$ such that for all $\tilde\xi \in U_{K,\epsilon}$,
the following estimates hold on the region $\{r \geq 1\}$\textup{:}
\begin{align}\label{e:saviors}
r^{1-\epsilon} \leq \tilde{r}' \leq r^{1+\epsilon}, \quad
(1-\epsilon) r^{-\epsilon}g_C \leq \tilde{g}_C' \leq (1+\epsilon) r^\epsilon g_C, \quad \sum_{k=1}^K r^k|\nabla^k_{g_C}\tilde{g}_C'|_{g_C} \leq \epsilon r^{\epsilon}.
\end{align}
\end{theorem}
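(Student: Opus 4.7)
\emph{Proof plan.} The first estimate in \eqref{e:saviors} follows directly from Proposition \ref{p:typeIbiholo}: since $\tilde r' = r \circ \Psi^{-1}$, inverting \eqref{lalilu} gives $\tilde r' \in [r^{1/\lambda_2}, r^{1/\lambda_1}]$ on $\{r \geq 1\}$, where $\lambda_1 = \min_L g_L(\xi,\tilde\xi)$ and $\lambda_2 = \max_L g_L(\xi,\tilde\xi)$; both tend to $g_L(\xi,\xi) = 1$ as $\tilde\xi \to \xi$, so $1/\lambda_2 \geq 1-\epsilon$ and $1/\lambda_1 \leq 1+\epsilon$ once $U_{K,\epsilon}$ is small enough.

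For the other two estimates, the plan is to pull both $g_C$ and $\tilde g_C'$ back to $\R\times L$ via the standard parameterization $F(t,x) := (e^t,x)$. One has $F^*g_C = e^{2t}(dt^2 + g_L)$ and, parameterizing $C$ alternatively by the $\tilde g_C'$-scaling flow $\tilde F(t,x) := \Psi^t(1,x)$, also $\tilde F^*\tilde g_C' = e^{2t}(dt^2 + \tilde g_L)$. Since $\tilde F = \Psi \circ F$ (directly from $\Psi(r,x) = \Psi^{\log r}(1,x)$), we get
\[
F^*\tilde g_C' = H^*\bigl(e^{2t}(dt^2 + \tilde g_L)\bigr), \qquad H := \tilde F^{-1} \circ F \colon \R\times L \to \R\times L,
\]
so everything reduces to a quantitative estimate on $H - \mathrm{Id}$. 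Using $-J\tilde\xi = \eta(\tilde\xi)\,r\partial_r - \Phi\tilde\xi$ on $C$ (exploiting $\Phi\xi = 0$), the flow $\Psi^t$ decouples into independent radial and link components, giving the explicit formula $H(t,x) = \bigl(s(t,x),\,\phi^{-s(t,x)}(x)\bigr)$, where $\phi^\tau$ denotes the flow on $L$ generated by $-\Phi\tilde\xi$ and $s(t,x)$ solves $\int_0^s \eta(\tilde\xi)(\phi^{-\sigma}x)\,d\sigma = t$. At $\tilde\xi = \xi$ one has $\phi = \mathrm{Id}$ and $s = t$, so $H = \mathrm{Id}$.

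Setting $\delta := |\tilde\xi - \xi|$ in a fixed norm on $\operatorname{Lie}(\T)$, Takahashi's formulas together with the compactness of $L$ yield $|\eta(\tilde\xi)-1|_{C^K(L)}$, $|\Phi\tilde\xi|_{C^K(L)}$, and $|\tilde g_L - g_L|_{C^K(L)}$ all $O_K(\delta)$. Gronwall-type bounds for $\phi^\tau$ give $|d\phi^\tau|_{C^0} \leq e^{O(\delta)|\tau|} \leq r^{O(\delta)}$ on the relevant range $|\tau|\leq s \lesssim t$, and successive differentiation of the implicit equation for $s$ then produces pointwise bounds $|\partial^\alpha(H-\mathrm{Id})| = O_{\alpha,K}\bigl(\delta\cdot \log r \cdot r^{O(\delta)}\bigr)$ uniformly on $\{t\geq 0\}\times L$ for $|\alpha|\leq K$. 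Combined with the $C^K$-smallness of $\tilde g_L - g_L$, this controls each component of $F^*\tilde g_C' - F^*g_C$ and its covariant derivatives in the $F^*g_C$-norm by the same type of expression. Since $\log r \cdot r^{O(\delta)} \leq \epsilon^{-1}r^\epsilon$ on $\{r \geq 1\}$ once $\delta$ is small enough in terms of $\epsilon$ (quantitatively, $\delta \leq \epsilon^{C(K)}$ suffices for some constant $C(K)$), all the errors are absorbed into $\epsilon r^\epsilon$, and pushing forward by $F$ yields \eqref{e:saviors}. The main technical obstacle is the bookkeeping for higher derivatives of the implicitly defined function $s(t,x)$; the decoupling of the radial and link dynamics in $\Psi^t$ is what prevents any exponential-in-$t$ growth and leaves only the tractable $r^{O(\delta)}$ factors.
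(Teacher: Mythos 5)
Your plan is correct, and while it shares the paper's starting point and overall philosophy (Gronwall for a flow that is $O(\delta)$-close to the scaling flow, then absorb the resulting $r^{O(\delta)}$ losses into $r^{\epsilon}$), the execution of the main technical step is genuinely different. The paper also deduces the first estimate in \eqref{e:saviors} from Proposition \ref{p:typeIbiholo}, but for the other two it pulls everything back by $\Psi$ and reduces to bounds on the differential $\Psi_*$ and its covariant derivatives $\nabla^k_{g_C}\Psi_*$ as sections of $T^*C\otimes\Psi^*TC$; these are obtained from the covariant ODE satisfied by the linearized flow of $X=-J\tilde\xi+J\xi$ along its integral curves (Lemma \ref{l:aneurysm}), Gronwall, variation of parameters, and Fa\`a di Bruno. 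You instead conjugate to the cylinder $\R\times L$ and exploit the triangular structure of the flow of $-J\tilde\xi=\eta(\tilde\xi)\,r\partial_r-\Phi\tilde\xi$: the link dynamics generated by $-\Phi\tilde\xi$ are autonomous and $O(\delta)$-small in $C^K$, and the radial reparameterization $s(t,x)$ solves a scalar implicit equation, so the higher-derivative bookkeeping becomes elementary calculus on a compact manifold plus implicit differentiation. This buys you a proof that bypasses Lemma \ref{l:aneurysm} entirely (which the paper proves from scratch for lack of a reference) and makes the origin of the $r^{O(\delta)}$ loss transparent; the cost is the extra $\log r$ factors in your error terms, which as written force $U_{K,\epsilon}$ to have size on the order of $\epsilon^{C(K)}$ rather than the $c_K\epsilon$-ball asserted in the sentence after the theorem (this affects only that remark, not the statement itself). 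Two points to make explicit when writing it up: the identity $\tilde F^*\tilde g_C'=e^{2t}(dt^2+\tilde g_L)$ relies on $\Psi$ fixing $\{r=1\}$ pointwise, so that the link of $\tilde g_C'$ is $\{1\}\times L$ with induced metric exactly $\tilde g_L$; and the weighted quantities $r^k|\nabla^k_{g_C}\cdot|_{g_C}$ for a $(0,2)$-tensor are equivalent to unweighted cylindrical $C^k$ norms of the $e^{-2t}$-rescaled pullback, which is what makes your coordinate bounds on $H-\mathrm{Id}$ sufficient. Also note the inversion of \eqref{lalilu} for the first estimate implicitly uses that $\Psi^{-1}$ preserves $\{r\geq1\}$, which the paper records just before the theorem.
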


In fact, it will be clear from the proof that if we fix any norm on ${\rm Lie}(\T)$, then $U_{K,\epsilon}$ may be chosen to contain a $c_K\epsilon$-ball around $\xi$, where $c_K > 0$ depends only on $(L,g_L)$, $\xi$, and $K$.

\begin{proof}
The statement about the radius functions is clear from Proposition \ref{p:typeIbiholo}.

To prove the pointwise inequalities of metric tensors, we first pull these inequalities back by $\Psi$, obtaining an equivalent statement in terms of $\Psi^*\tilde{g}_C' = \tilde{g}_C$, $\Psi^*g_C$, and $r \circ \Psi$. Up to renaming $\epsilon$, we can then replace $r \circ \Psi$ by $r$ without loss. We can also replace $\tilde{g}_C$ by $g_C$ because $\tilde{g}_C$ and $g_C$ are cone metrics with the same scaling vector field, hence are uniformly equivalent over the whole cone $C$, and by Theorem \ref{taka} the equivalence constants are bounded by $1\pm\epsilon$ for all $\tilde{\xi}$ in a sufficiently small neighborhood of $\xi$. Thus, it suffices to prove that for all $\tilde\xi$ sufficiently close to $\xi$, all $p = (r,x) \in C$ with $r \geq 1$, and all $v \in T_p C$, we have that
\begin{align}\label{ineedyou}
(1-\epsilon)r^{-\epsilon}|v|_{g_C}\leq |\Psi_*|_{p}v|_{g_C} \leq (1+\epsilon)r^\epsilon |v|_{g_C}.
\end{align}

To prove \eqref{ineedyou}, recall that $\Psi(r,x) = \Psi^{\log r}(1,x)$, where $\Psi^t$ denotes the flow of $-J\tilde{\xi}$. Using the fact that the vector field $X = -J\tilde\xi + J\xi$ commutes with $-J\xi = r\partial_r$, we can rewrite this definition as $\Psi(r,x) = \Phi^{\log r}(r,x)$, where $\Phi^t$ denotes the flow of $X$. Thus,
\begin{align}\label{ineedyouthree}
\Psi_*|_pv = \Phi^{\log r}_*|_p (v +r^{-1} dr(v) X|_p).
\end{align}
Now, according to a general identity (the case $k = 0$ of Lemma \ref{l:aneurysm} below, see \eqref{yabbadabba}),
\begin{align}\label{ineedyoutoo}
\frac{\nabla_{g_C}}{dt}(\Phi^t_*|_p w) = \nabla_{g_C} X|_{\Phi^t(p)}(\Phi^t_*|_pw)
\end{align}
for all $w\in T_pC$. Also, because $X$ is scale-invariant, we can assume after shrinking $U_{K,\epsilon}$ that
\begin{align}\label{number4}
\sup\nolimits_C (r^{-1}|X|_{g_C} + |\nabla_{g_C} X|_{g_C}) \leq \epsilon.
\end{align}
Thus, letting $w$ be the argument of $\Phi^{\log r}_*|_p$ on the right-hand side of \eqref{ineedyouthree} and applying Gronwall's lemma to the ODE \eqref{ineedyoutoo} up to time $t = \log r$,we exactly obtain the desired estimate \eqref{ineedyou}.

We are now left with proving the higher-order estimates in \eqref{e:saviors}. View the differential $\Psi_*$ as a section of the vector bundle $T^*C \otimes \Psi^*TC$ equipped with the metric and connection induced by $g_C$. Then we claim that it suffices to prove that for some constant $A = A_K$,
\begin{align}\label{andyoutoo}
\sum_{k=1}^{K} r^k |\nabla^k_{g_C} \Psi_*|_{g_C}\leq A r^\epsilon.
\end{align}
Indeed, after renaming $\epsilon$, it is clear from \eqref{andyoutoo} that \eqref{andyoutoo} also holds for the tensor $\Psi^*g_C$ instead of the tensor $\Psi_*$, which allows us to compare the connections of $g_C$ and of $\Psi^*g_C$. Then notice that $|\tilde{g}_C - g_C|_{g_C} \leq \epsilon$ and $r^k|\nabla_{g_C}^k \tilde{g}_C|_{g_C} \leq \epsilon$ for $k \in \{1,\ldots,K\}$ because $\tilde{g}_C,g_C$ are cone metrics with the same scaling vector field. Together with the above comparison of connections, this yields
$$\sum_{k=1}^K r^k |\nabla^k_{\Psi^*g_C} \tilde{g}_C|_{g_C} \leq \epsilon r^\epsilon.$$
Pulling back by $\Psi^{-1}$ and using the $C^0$ estimates in \eqref{e:saviors}, we obtain the  desired $C^K$ estimate.

Thus, it remains to prove \eqref{andyoutoo}. We will do so by computing the $k$-th derivative of  \eqref{ineedyouthree} using the chain rule. However, before doing this computation, we will first prove a preliminary estimate, \eqref{w00t}, which will allow us to estimate the terms that arise from the chain rule.

The key ingredient is Lemma \ref{l:aneurysm} below. The case $k = 0$ of this lemma was already used above. For a general $k$ and for all $v_1,\ldots,v_{k+1} \in T_pC$, the lemma yields an ODE of the form
\begin{equation}\label{trullala} \frac{\nabla_{g_C}}{dt}[(\nabla^k_{g_C}\Phi_\ast^t)(v_1,\ldots,v_{k+1})] = \nabla_{g_C} X|_{\Phi^t(p)}[(\nabla^k_{g_C}\Phi_\ast^t)(v_1,\ldots,v_{k+1})]  + \Theta^{t}.\end{equation}
The vector field $\Theta^{t}$ along the curve $\Phi^t(p)$ depends
on $\nabla_{g_C}^j\Phi^t_*$ and $\nabla_{g_C}^j R$ for $j \in \{0,\ldots,k-1\}$ and on $\nabla_{g_C}^j X$ for $j \in \{2,\ldots,k+1\}$, where $R$ denotes the curvature tensor of $g_C$. The fundamental solution of the corresponding homogeneous ODE is bounded by $e^{\pm\epsilon t}$ in operator norm thanks to \eqref{number4} and Gronwall's lemma. Using variation of parameters and the explicit form of $\Theta^t$ from \eqref{aneurysm} ($i \geq 2$), one can then prove by induction on $k \in \{0,\ldots,K\}$ that
\begin{align*}
|\Theta^t|_{g_C} \leq A e^{(-1+\epsilon)t},\quad |\nabla^k_{g_C} \Phi^t_*|_{g_C} \leq A e^{\epsilon t}.
\end{align*}
Here $A$ depends only on $K$ but we need to shrink the neighborhood $U_{K,\epsilon}$ in each step of the proof. Similar but slightly easier arguments then also show that for all $k,\ell\in\{0,\ldots,K\}$,
\begin{align}
\label{w00t}
\biggl|\frac{\nabla^\ell_{g_C}}{dt^\ell}(\nabla^k_{g_C}\Phi_\ast^t)\biggr|_{g_C} \leq A e^{\epsilon t}.
\end{align}
Indeed, the case $\ell = 0$ is what we just proved, the case $\ell = 1$ follows from this using \eqref{trullala}, and we then simply differentiate \eqref{trullala} by $t$ to continue (no ODE solution formulas are required).

We are now finally in position to differentiate \eqref{ineedyouthree} and thus prove \eqref{andyoutoo}. To actually calculate the $k$-th derivative of \eqref{ineedyouthree}, it is convenient to use Fa\`a di Bruno's formula. To estimate the resulting terms, we use \eqref{w00t} evaluated at $t = \log r$. We thus obtain that $|\nabla^k_{g_C}\Psi_*|_{g_C}$ is bounded by
\begin{align*}
A \sum_{\substack{d,m_1,\ldots,m_d\in\N_0\\m_1+2m_2+\cdots+dm_d=k}} \left(\sum_{\substack{\ell_1,\ell_2\in\N_0\\\ell_1+\ell_2=m_1+\cdots+m_d}}\left| \frac{\nabla_{g_C}^{\ell_1}}{dt^{\ell_1}}(\nabla_{g_C}^{\ell_2} \Phi^t_*)\Bigg|_{t = \log r}  \right|_{g_C}\right)\prod_{j=1}^d\;\biggl|\nabla^j_{g_C}\biggl(\log r, {\rm Id} +\frac{dr}{r} X\biggr)\biggr|_{g_C}^{m_j},
\end{align*}
which is in turn bounded by
\begin{align*}
A \sum_{\substack{d,m_1,\ldots,m_d\in\N_0\\m_1+2m_2+\cdots+dm_d=k}} \left(\sum_{\substack{\ell_1,\ell_2\in\N_0\\\ell_1+\ell_2=m_1+\cdots+m_d}}r^\epsilon\right)\prod_{j=1}^d r^{-jm_j} \leq A r^{-k+\epsilon}.
\end{align*}
This proves \eqref{andyoutoo} and hence the $C^K$ estimate in \eqref{e:saviors}.
\end{proof}

It remains to prove the covariant differentiation formula for linearized flows that we used above. This should be well-known but since we were unable to find a reference, we will prove it here.

For $k \in \N_0$, we write $S_{k+1}$ to denote the permutation group of $\{1,\ldots,k+1\}$. Given any function $f: S_{k+1} \to \R$ and any real vector space $V$, we define the associated \emph{shuffle operator} by
\begin{equation}
\mathfrak{S}: V^{\otimes(k+1)} \to V^{\otimes(k+1)}, \;\,v_1 \otimes \cdots \otimes v_{k+1} \mapsto \sum f(\sigma)v_{\sigma(1)}\otimes \cdots \otimes v_{\sigma(k+1)}.
\end{equation}
Here the sum runs over all $\sigma \in S_{k+1}$. This is a minor generalization of a well-known definition from combinatorics.
Given $i \in \{1,\ldots,k+1\}$, we also define $I_{i,k} := \{\alpha \in \N_0^i: |\alpha|+i=k+1\}$. If $M$ is a smooth manifold and $\Phi: M \to M$ is a smooth map, we view the differential $\Phi_*$ as a section of the bundle $T^*M \otimes \Phi^*TM$. Any connection $\nabla$ on $TM$ induces a connection on this bundle, which we also denote by $\nabla$. For $\alpha \in I_{i,k}$, we then define a bundle homomorphism
\begin{equation}
\nabla^\alpha\Phi_*:= \nabla^{\alpha_1}\Phi_* \otimes \cdots \otimes \nabla^{\alpha_i}\Phi_*: TM^{\otimes(k+1)} \to \Phi^*TM^{\otimes i}.
\end{equation}
This is a polynomial differential operator of order $\max \alpha$ in terms of $\Phi_*$. Thus, the highest possible order of an operator of this form is $k$, and this is attained only for $\alpha = (k) \in I_{1,k}$.

\begin{lemma}\label{l:aneurysm}
 For all $k \in \N_0$, $i \in \{1,\ldots,k+1\}$, $\alpha \in I_{i,k}$, and $j \in \{0,\ldots,i-2\}$, there exist shuffles $\mathfrak{S}_\alpha$ and $\mathfrak{S}_{\alpha,j}$ such that $\mathfrak{S}_{(k)} = {\rm Id}$ and such that the following holds. Let $M$ be a smooth manifold, let $\nabla$ be a torsion-free connection on $TM$, and let $R$ be the type $(3,1)$ curvature tensor of $\nabla$. Let $X$ be a smooth vector field on $M$ with maximal local flow $\Phi^t$. Then the homomorphism
\begin{equation}
\mathfrak{L}^{k,t}: (TM)^{\otimes(k+1)} \to (\Phi^t)^*TM, \;\, v_1 \otimes \cdots \otimes v_{k+1} \mapsto \frac{\nabla}{dt}[(\nabla^k\Phi^t_\ast)(v_1, \ldots, v_{k+1})],
\end{equation}
can be expressed in terms of $X,R$ and their covariant derivatives via
\begin{align}\label{aneurysm}
\mathfrak{L}^{k,t} = \sum_{i=1}^{k+1} \sum_{\alpha\in I_{i,k}} \biggl\{\nabla^i X|_{\Phi^t}  \circ \nabla^{\alpha}\Phi^t_* \circ \mathfrak{S}_{\alpha}
+  \sum_{j=0}^{i-2}  ((\nabla^j_\bullet R)(\nabla^{i-2-j}_\bullet X, \bullet)\bullet)|_{\Phi^t} \circ  \nabla^{\alpha}\Phi^t_* \circ \mathfrak{S}_{\alpha,j}\biggr\}.
\end{align}
\end{lemma}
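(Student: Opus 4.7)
The plan is to proceed by induction on $k$, writing $\nabla^k\Phi^t_\ast = \nabla(\nabla^{k-1}\Phi^t_\ast)$ and commuting $\nabla/dt$ with the outermost spatial covariant derivative. The base case $k=0$ is the classical variational formula $\frac{\nabla}{dt}(\Phi^t_\ast v) = \nabla_{\Phi^t_\ast v}X|_{\Phi^t(p)}$, which follows by applying the torsion-free identity $\frac{\nabla}{dt}\partial_s = \frac{\nabla}{ds}\partial_t$ to the two-parameter family $(t,s)\mapsto\Phi^t(\gamma(s))$ with $\gamma'(0)=v$ and using $\partial_t = X$. This is exactly the single term $i=1$, $\alpha=(0)$, $\mathfrak{S}_{(0)}=\mathrm{Id}$ in \eqref{aneurysm}, with empty curvature sum.

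For the inductive step, I fix $v_1,\ldots,v_{k+1}\in T_pM$, pick a curve $\gamma(s)\subset M$ with $\gamma'(0)=v_1$, and parallel-transport $v_2,\ldots,v_{k+1}$ along $\gamma$ to fields $V_2(s),\ldots,V_{k+1}(s)$. Then $(\nabla^k\Phi^t_\ast)(v_1,\ldots,v_{k+1})$ equals $\frac{\nabla}{ds}|_{s=0}$ of $(\nabla^{k-1}\Phi^t_\ast)(V_2(s),\ldots,V_{k+1}(s))$. The central tool is the commutation identity
\[\frac{\nabla}{dt}\frac{\nabla}{ds}Y - \frac{\nabla}{ds}\frac{\nabla}{dt}Y = R\bigl(X,\,\Phi^t_\ast\gamma'(s)\bigr)Y\]
for any $(\Phi^t)^\ast TM$-valued field $Y(t,s)$ along the two-parameter family $\Phi^t(\gamma(s))$. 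Applying this with $Y=(\nabla^{k-1}\Phi^t_\ast)(V_2,\ldots,V_{k+1})$ and invoking the inductive hypothesis for the resulting $\frac{\nabla}{dt}$, I arrive at
\[\mathfrak{L}^{k,t}(v_1,\ldots,v_{k+1}) = \nabla_{v_1}\bigl[\mathfrak{L}^{k-1,t}(v_2,\ldots,v_{k+1})\bigr] + R(X,\Phi^t_\ast v_1)\bigl[(\nabla^{k-1}\Phi^t_\ast)(v_2,\ldots,v_{k+1})\bigr].\]

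It remains to expand the first bracket by the Leibniz rule. Each $\nabla_{v_1}$ hitting a factor $\nabla^{\alpha_\ell}\Phi^t_\ast$ raises the corresponding multi-index entry by one and inserts $v_1$ into the new slot; each $\nabla_{v_1}$ hitting a factor $\nabla^i X|_{\Phi^t}$ or $\nabla^j R|_{\Phi^t}$ produces $\nabla^{i+1}X$ or $\nabla^{j+1}R$ with extra argument $\Phi^t_\ast v_1$. The curvature remainder fits the $i=2,\ j=0,\ \alpha=(k-1,0)$ slot. Thus every resulting summand has the form prescribed by \eqref{aneurysm}, and the permutations that distribute $v_1,\ldots,v_{k+1}$ among the various factors define the shuffle operators $\mathfrak{S}_\alpha,\mathfrak{S}_{\alpha,j}$. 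The highest-order term $\nabla X \circ \nabla^k\Phi^t_\ast$ arises uniquely, from $\nabla_{v_1}$ landing on the $\nabla^{k-1}\Phi^t_\ast$ factor inside the $i=1,\alpha=(k-1)$ summand of $\mathfrak{L}^{k-1,t}$, so it carries $\mathfrak{S}_{(k)}=\mathrm{Id}$ as required.

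The only substantive obstacle is bookkeeping: tracking how the $k+1$ input vectors are redistributed among the $i$ output factors of $\Phi^t_\ast$ after each Leibniz step. Since the lemma only asserts the \emph{existence} of the shuffles $\mathfrak{S}_\alpha,\mathfrak{S}_{\alpha,j}$, and since the dominant term is identified explicitly, no closed-form description of them is needed.
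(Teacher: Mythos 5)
Your proposal is correct and follows essentially the same route as the paper's proof: induction on $k$, commuting $\nabla/dt$ past the outermost spatial covariant derivative at the cost of a curvature term $R(X,\Phi^t_*v_1)(\nabla^{k-1}\Phi^t_*)(v_2,\ldots,v_{k+1})$, and then expanding $\nabla_{v_1}[\mathfrak{L}^{k-1,t}]$ by the Leibniz rule. The only (cosmetic) difference is that you evaluate on fields parallel along $\gamma$ so that the Leibniz correction terms vanish identically, whereas the paper uses a general extension $\gimel$ and cancels those corrections explicitly.
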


Notice that the sum over $j$ is empty unless $i \geq 2$, and if $i \geq 2$ then $\max \alpha \leq |\alpha| \leq k-1$. Thus, \eqref{aneurysm} contains only one term of the highest possible order of differentiation, $k$, with respect to $\Phi^t_*$. As expected, this term takes the form $\nabla X|_{\Phi^t} \circ \nabla^k\Phi^t_*$, with no need to shuffle the arguments.

\begin{proof}
We prove this by induction on $k$. Assume that the lemma is true with $k$ replaced by $k-1$. Choose an arbitrary point $p \in M$ and tangent vectors $v_1, \ldots, v_{k+1} \in T_pM$, and consider the vector field $V(t) = (\nabla^k\Phi^t_*)(v_1,\ldots,v_{k+1})$ along the integral curve $\gamma(t) = \Phi^t(p)$ of $X$ starting at $p$. We need to compute the covariant derivative of $V$ along $\gamma$. To this end, for $\delta = \delta(p) > 0$ sufficiently small, extend $\gamma$ to a map $\gimel: (-\delta,\delta)^{k+2} \to M$ such that $\gimel(0,\ldots,0) = p$ and such that if $s_1,\ldots,s_{k+1},t$ are the standard coordinates on $(-\delta,\delta)^{k+2}$, then
$$\gimel_{s_i}(0,\ldots,0) = v_i\;\,\text{for all $i$, and}\;\, \gimel(s_1,\ldots,s_{k+1},t) = \Phi^t(\gamma(s_1,\ldots,s_k,0)).$$
For $k = 0$, using our assumption that $\nabla$ is torsion-free, we now compute
\begin{align}\label{yabbadabba}
\frac{\nabla V}{dt} = \frac{\nabla\gimel_{s_1}}{\partial t} = \frac{\nabla \gimel_t}{\partial s_1} = \nabla X|_{\Phi_t(p)}(\Phi^t_* v_1),
\end{align}
which agrees with \eqref{aneurysm}. For $k \geq 1$, by unpacking the definitions,
\begin{align}\label{plethysm0}
\frac{\nabla V}{dt} &= \frac{\nabla}{\partial t}\biggl(\frac{\nabla}{\partial s_1}[(\nabla^{k-1}\Phi^t_*)(\gimel_{s_2},\ldots,\gimel_{s_{k+1}})]\biggr) - \frac{\nabla}{dt}\biggl(\sum_{\ell= 2}^{k+1} (\nabla^{k-1}\Phi^t_*)(v_2,\ldots, \frac{\nabla\gimel_{s_\ell}}{\partial s_1}, \ldots, v_{k+1})\biggr).
\end{align}
Commuting covariant derivatives, the first term here can be rewritten as
\begin{align}
&\frac{\nabla}{\partial s_1}\biggl(\frac{\nabla}{\partial t}[(\nabla^{k-1}\Phi^t_*)(\gimel_{s_2}, \ldots, \gimel_{s_{k+1}})]\biggr)  + R(X|_{\Phi^t}, \Phi^t_*v_1)((\nabla^{k-1}\Phi^t_*)(v_2,\ldots,v_{k+1})).\label{plethysm1}
\end{align}
The $R$ term has the desired form with $i = 2$, $\alpha = (0,k-1) \in I_{2,k}$, $j = 0$ in \eqref{aneurysm}. There is no need to permute the arguments $v_1,\ldots,v_{k+1}$ for this term, so we can set $\mathfrak{S}_{\alpha,j} = {\rm Id}$ for these values of $\alpha,j$, and if the same $\alpha, j$ reappear with permuted arguments later, we simply update $\mathfrak{S}_{\alpha,j}$ by adding the relevant permutation. Now, by induction, the first term in \eqref{plethysm1} is equal to
\begin{align*}
\sum_{i=1}^k \sum_{\alpha\in I_{i,k-1}} \biggl(\biggl[\nabla_{v_1}\biggl\{\eqref{aneurysm}\biggr\}\biggr](v_2 \otimes \cdots \otimes v_{k+1}) + \sum_{\ell=2}^{k+1} \biggl\{\eqref{aneurysm}\biggr\}\biggl[v_2 \otimes \cdots
\otimes \frac{\nabla \gimel_{s_\ell}}{\partial s_1} \otimes \cdots \otimes v_{k+1}  \biggr]\biggr).
\end{align*}
The second term here cancels with the second term in \eqref{plethysm0} by induction, and the first term can easily be brought into the desired form by shifting the indices $i,\alpha,j$.  This concludes the inductive step. Notice that terms of the form $(\nabla_{v_1}(\nabla^\alpha\Phi^t_*))(v_2 \otimes \cdots \otimes v_{k+1})$ with $i \geq 2$ force us to introduce nontrivial shuffles of the arguments, and $i \geq 2$ is possible as soon as $k \geq 2$. However, there
are no shuffles for $i = 1$ (the unique term of highest order with respect to $\Phi^t_*$), so that $\mathfrak{S}_{(k)} = {\rm Id}$.
\end{proof}

\section{A Gysin type theorem for orbifold circle bundles}\label{s:gysin}

The purpose of this section is to prove the following theorem.

\begin{theorem}\label{th:gysin}
Let $p: N \to D$ be a $C^\infty$ complex orbifold line bundle on a $C^\infty$ orbifold $D$. Let $L$ be the unit circle bundle of $N$ with respect to some Hermitian metric on $N$. Let $Q$ be a $C^\infty$ complex orbifold line bundle on $D$. Then $(p|_L)^*Q$ is trivial as a $C^\infty$ complex orbifold line bundle on $L$ if and only if $Q$ is isomorphic to $N^{-\alpha}$ as a $C^\infty$ complex orbifold line bundle on $D$ for some $\alpha \in \Z$.
\end{theorem}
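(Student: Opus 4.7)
The forward implication is immediate: the total space of $N$ carries a tautological section $\tau(v) = v$ of $p^*N$ which is nowhere zero on $N \setminus \{0\} \supset L$, trivializing $(p|_L)^*N$ and hence also $(p|_L)^*N^{-\alpha}$ for any $\alpha \in \Z$.

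For the substantive direction, my plan is to combine two standard orbifold-cohomological ingredients. The first is that the group of isomorphism classes of $C^\infty$ complex orbifold line bundles on $D$ is naturally isomorphic to $H^2(D,\Z)$ via the integer first Chern class. This follows from the orbifold exponential sheaf sequence
\begin{equation*}
0 \to \underline{\Z} \to \underline{\R} \xrightarrow{e^{2\pi i (\cdot)}} \underline{U(1)} \to 0
\end{equation*}
on $D$, together with the acyclicity of the fine sheaf $\underline{\R}$ of smooth real-valued orbifold functions (which admits partitions of unity subordinate to good orbifold covers). Throughout, $H^*(D,\Z)$ is understood as orbifold sheaf cohomology, equivalently computed from the classifying space of $D$ as an \'etale groupoid.

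The second ingredient is an orbifold Gysin sequence for the principal $S^1$-bundle $p|_L: L \to D$:
\begin{equation*}
\cdots \to H^{k-2}(D,\Z) \xrightarrow{\cup c_1(N)} H^k(D,\Z) \xrightarrow{(p|_L)^*} H^k(L,\Z) \to H^{k-1}(D,\Z) \to \cdots,
\end{equation*}
with Euler class equal to $c_1(N)$. This will be derived by the same Thom isomorphism / Leray spectral sequence argument as in the manifold case, now run in orbifold integer cohomology; the finite isotropy groups act trivially on $\underline{\Z}$, so no new obstruction appears. Exactness at $H^2(D,\Z)$ then identifies the kernel of $(p|_L)^*$ with the image of cup with $c_1(N)$ out of $H^0(D,\Z) = \Z$, namely $\Z \cdot c_1(N)$.

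Combining the two, the hypothesis that $(p|_L)^*Q$ is trivial gives $(p|_L)^* c_1(Q) = c_1((p|_L)^*Q) = 0 \in H^2(L,\Z)$, so exactness yields $c_1(Q) = \alpha \cdot c_1(N) = c_1(N^\alpha)$ for some $\alpha \in \Z$, and the classification forces $Q \cong N^\alpha$; setting $\beta = -\alpha$ we get $Q \cong N^{-\beta}$ as required. The main obstacle is to set up these two orbifold ingredients rigorously for general smooth orbifolds with arbitrary (finite) isotropy. One can either adopt the formalism of orbifold cohomology as singular/sheaf cohomology of the classifying space of the orbifold groupoid, in which case both statements reduce at once to their standard manifold analogues, or else work locally on orbifold charts, carefully verifying equivariance for the local uniformizing groups when gluing. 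The honest mathematical content is in the Thom isomorphism and the identification of the Euler class with $c_1(N)$; the rest is bookkeeping.
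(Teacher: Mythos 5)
Your argument is correct in outline, but it takes a genuinely different route from the paper, and the difference is instructive. You reduce the theorem to two cohomological facts: the classification of smooth orbifold line bundles by an integral $H^2$, and an integral Gysin sequence for the orbifold circle bundle $L \to D$ with Euler class $c_1(N)$. Both are true if one consistently interprets $H^*(\cdot,\Z)$ as the cohomology of the classifying space of the orbifold groupoid (equivalently, groupoid sheaf cohomology with constant coefficients): for a local model $[\tilde{U}/\Gamma]$ the Borel construction turns $L\to D$ into an honest principal $S^1$-bundle $BL \to BD$, and the exponential sequence plus averaging-based partitions of unity give the line-bundle classification. However, this is exactly the machinery the authors chose to avoid: they point out that the only orbifold Gysin sequence available in the literature is over $\Q$ (Boyer--Galicki), which would only show that $Q$ is a torsion twist of a power of $N$, and the "main obstacle'' you defer --- setting up the integral statements with all the identifications (comparison of groupoid sheaf cohomology with $H^*(BD,\Z)$, naturality of $c_1$ under the orbibundle pullback of \cite[Rmk 4.30]{faulk}, identification of the Euler class) --- is the actual content, comparable in length to a direct proof. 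The paper instead gives a self-contained, Chern-class-free argument (due to a suggestion of C.-C.~M.~Liu): one observes that $(p|_L)^*Q$ carries a canonical $S^1$-action lifting the $S^1$-action on $L$, with $Q$ recovered as the $S^1$-quotient, so triviality of $(p|_L)^*Q$ reduces the theorem to classifying $S^1$-actions on $L\times\C$ covering the action on $L$; a winding-number argument plus an explicit averaged gauge transformation shows every such action is $(\ell,z)g=(\ell g, g^\alpha z)$ up to gauge, and $L\times\C$ with that action is equivariantly $(p|_L)^*N^{-\alpha}$. Your route buys conceptual clarity and generalizes immediately to higher $H^k$; the paper's buys a short, elementary, citation-free proof. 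If you pursue your version, the one step you must not wave away is the integral (not rational) Gysin sequence together with the identification of its Euler class with the class of $N$ under your line-bundle classification.
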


Note that there is no obvious general definition of the pullback of an orbifold vector bundle along a map of orbifolds. However, the relevant map in the theorem is the projection of an orbifold fiber bundle onto its base, and for such maps the pullback is defined carefully in \cite[Rmk 4.30]{faulk}.

The convention $-\alpha$ for the exponent is chosen to match conventions elsewhere in this paper.

If $D$ is actually a manifold, then by identifying $C^\infty$ complex line bundles with their first Chern classes, the theorem becomes equivalent to the statement that ${\rm ker}((p|_L)^*: H^2(D,\Z) \to H^2(L,\Z))$ is the subgroup generated by $c_1(N)$. This is a well-known consequence of the Gysin sequence over $\Z$. There does exist an orbifold version of the Gysin sequence over $\Q$ \cite[Prop 4.7.9]{book:Boyer}. This suffices to prove a slightly weaker version of Theorem \ref{existence}, which would only state that $K_V$ is torsion in general, and that $K_V$ is trivial whenever there exists a K\"ahler crepant resolution $\pi: M \to V$.

Our proof of Theorem \ref{th:gysin} bypasses the use of Chern classes even when $D$ is a manifold. We are grateful to C.-C.~M.~Liu for suggesting the key idea, which is to view $(p|_L)^*Q$ as an $S^1$-equivariant line bundle on $L$ and to classify the possible actions of $S^1$ on the trivial line bundle $L \times \C$.

\begin{proof}[Proof of Theorem \ref{th:gysin}] Cover $D$ by an atlas of uniformizing charts that trivializes $L$ as a principal $S^1$-orbibundle and $Q$ as a complex line orbibundle. Thus, for each chart $\varphi: \tilde{U} \to U$ of the atlas we may write $L|_U = (\tilde{U} \times S^1)/\Gamma$ and $Q|_U = (\tilde{U}\times\C)/\Gamma$,  where $\Gamma$ is the local uniformizing group of $D$ acting effectively on $\tilde{U}$ and where the action of $\Gamma$ on the fibers is given by homomorphisms $\Gamma \to S^1$ for $L$ and $\Gamma \to \C^*$ for $Q$. The fiberwise right action of $S^1$ on $\tilde{U} \times S^1$ commutes with the left action of $\Gamma$ and with the left action of the transition functions of $L$. We therefore obtain a right action of $S^1$ on $L$ with finite stabilizers, and with trivial stabilizers on the open dense set $L|_{D^{\rm reg}}$. The pullback bundle $(p|_L)^*Q$ takes the form $(\tilde{U} \times S^1 \times \C)/\Gamma$ locally for the product action of $\Gamma$ on $S^1 \times \C$, and its transition functions are pulled back from $\tilde{U}$ under the projection $\tilde{U} \times S^1 \to \tilde{U}$. This picture allows us to lift the right action of $S^1$ by orbifold diffeomorphisms on $L$ to a right $S^1$-action by orbibundle automorphisms on $(p|_L)^*Q$. In fact, we can simply let $S^1$ act trivially on the local $\C$-factors. Then the original orbibundle $Q$ can be recovered as the $S^1$-quotient of $(p|_L)^*Q$. The same construction would have gone through for any other Lie group $G$ instead of $S^1$ as the structure group of $L$, and amounts to a version of the isomorphism $K_G(X) \cong K(X/G)$ of \cite[Prop 2.1]{Segal}.

We will now use the $S^1$-action on $(p|_L)^*Q$ constructed in the previous paragraph to characterize the case that $(p|_L)^*Q$ is trivial as a complex line orbibundle. More precisely, we will prove that up to a gauge transformation of $L \times \C$, the only possible $S^1$-actions on $L \times \C$ are those of the form
$$(\ell,z)g = (\ell g, g^{\alpha} z)$$  for some $\alpha \in \Z$ and all $(\ell,z) \in L \times \C$ and $g \in S^1$. The theorem follows from this. Indeed, it suffices to observe that $L \times \C$ together with the $S^1$-action $(\ell,z)g= (\ell g, g^{\alpha} z)$ is equivariantly isomorphic to $(p|_L)^*N^{-\alpha}$. To see this, we go through the above construction of an $S^1$-action on $(p|_L)^*N^{-\alpha}$, using the same trivializations for the principal $S^1$-orbibundle $L$ and for the complex line orbibundle $N$. In the resulting local charts, the desired isomorphism from $(p|_L)^*N^{-\alpha}$ to $L \times \C$ is given by
\begin{align*}
\begin{split}
(\tilde{U} \times S^1 \times \C)/\Gamma &\to ((\tilde{U} \times S^1)/\Gamma) \times \C,\\
[(\tilde{u}, h, z)] &\mapsto ([(\tilde{u},h)], h^{\alpha}z).
\end{split}
\end{align*}
One easily checks that this is well-defined, equivariant, and consistent under changes of charts.

It remains to classify all possible $S^1$-actions on $L \times \C$. Any such action takes the form
$$(\ell,z)g = (\ell g, F(\ell,g)z)$$
for some smooth map $F: L \times S^1 \to \C^*$. By definition, for all $\ell \in L$ and $g,h \in S^1$,
\begin{equation}\label{e:cocycle}F(\ell, gh) = F(\ell g,h)F(\ell,g).\end{equation}
We are free to apply a gauge transformation of the trivial complex line bundle $L \times \C$, i.e., a smooth map $E: L \to \C^*$.
This changes $F$ to $F^E$ defined by
$$F^E(\ell,g) := E(\ell g)^{-1}F(\ell,g)E(\ell).$$
Our goal is to find an $E$ such that $F^{E}(\ell,g) = g^{\alpha}$ for some $\alpha \in \Z$ and for all $\ell \in L$, $g \in S^1$.

Fix any point $\ell_0 \in L$. Then the smooth map $F(\ell_0, \cdot): S^1 \to \C^*$ has a winding number $\alpha \in \Z$, which is independent of $\ell_0$. As the winding number is additive under pointwise multiplication, and is equal to zero if and only if the map has a well-defined smooth $\log$, we may write $$F(\ell_0, g) = g^{\alpha} e^{f(\ell_0,g)}$$
for some smooth function $f: L \times S^1 \to \C$ unique up to a constant in $2\pi i \Z$. After changing $f$ by a suitable constant, the cocycle condition \eqref{e:cocycle} may be rewritten as
\begin{equation}\label{e:cocycle2}f(\ell,gh) = f(\ell g,h) + f(\ell,g).\end{equation}
Using \eqref{e:cocycle2}, we will now construct the desired gauge transformation $E$ by working in local charts for the bundle $L$ and checking that our definition is consistent under changes of charts.

Fix a uniformizing chart $\varphi: \tilde{U} \to U$ for $D$ with local group $\Gamma$. Consider the local representation $L|_U = (\tilde{U} \times S^1)/\Gamma$, where $\Gamma$ acts on the left on $S^1$ via some homomorphism $\iota: \Gamma \to S^1$. Note that $\iota$ is not required to be injective, which would be equivalent to $L|_U$ being a manifold. Let $\tilde{F},\tilde{f}$ denote the lifts of $F,f$ from $L|_U \times S^1$ to $(\tilde{U}\times S^1) \times S^1$, respectively. Given any $h_0 \in S^1$, consider
\begin{align*}
\begin{split}
\tilde{E}_{h_0}: \tilde{U} \times S^1 &\to \C^*,\\
(\tilde{u},h) &\mapsto e^{\tilde{f}((\tilde{u},h_0),h_0^{-1}h) - C_{h_0}(\tilde{u})},
\end{split}
\end{align*}
where by definition
$$C_{h_0}(\tilde{u}) := \av f((\tilde{u},h_0),h_0^{-1}g)\,dg . $$
Then the following properties (1)--(5) lead to the desired gauge transformation $E$.

(1) \emph{The lift $\tilde{f}: (\tilde{U} \times S^1) \times S^1 \to \C^*$ satisfies the cocycle property}
\begin{equation}\label{e:cocycle3}\tilde{f}(\tilde{\ell}, gh) = \tilde{f}(\tilde{\ell} g, h) + \tilde{f}(\tilde{\ell}, g) \;\, \text{\emph{for all}}\;\, \tilde{\ell} \in \tilde{U} \times S^1 \;\,\text{\emph{and}}\;\, g, h \in S^1.\end{equation}

\emph{Proof}. For $\tilde{\ell} \in \tilde{U} \times S^1$, let $\ell$ be the projection of $\tilde\ell$ to the quotient $(\tilde{U}\times S^1)/\Gamma = L|_U$. Then it suffices to apply \eqref{e:cocycle2} and observe that $\tilde{\ell}g$ projects to $\ell g$ in the quotient space by construction.

(2) \emph{For any two elements $h_0,h_0' \in S^1$ it holds that $\tilde{E}_{h_0} = \tilde{E}_{h_0'}$.}

\emph{Proof}. Thanks to \eqref{e:cocycle3} it holds for all $(\tilde{u},h) \in \tilde{U}\times S^1$ that
$$\tilde{f}((\tilde{u},h_0'),(h_0')^{-1}h) = \tilde{f}((\tilde{u},h_0),h_0^{-1}h) + \tilde{f}((\tilde{u},h_0'),(h_0')^{-1}h_0).$$
The second term depends on $\tilde{u}$ but not on $h$. Thus,
$$\tilde{f}((\tilde{u},h_0'),(h_0')^{-1}h) - C_{h_0'}(\tilde{u}) = \tilde{f}((\tilde{u},h_0),h_0^{-1}h) - C_{h_0}(\tilde{u}).$$
Property (2) is now obvious. In particular, this allows us to write $\tilde{E}$ instead of $\tilde{E}_{h_0}$.

(3) \emph{The map $\tilde{E}: \tilde{U} \times S^1 \to \C^*$ is smooth and $\Gamma$-invariant.}

\emph{Proof}. Smoothness is clear from the definition of $\tilde{E} = \tilde{E}_{h_0}$ for any fixed $h_0 \in S^1$. To prove $\Gamma$- invariance, consider an arbitrary element $\gamma \in \Gamma$. Then for all $(\tilde{u},h) \in \tilde{U}\times S^1$ and $h_0 \in S^1$,
\begin{align}
\tilde{E}(\gamma(\tilde{u},h)) = \tilde{E}_{h_0}(\gamma\tilde{u},\iota(\gamma)h) = e^{\tilde{f}((\gamma\tilde{u},h_0),h_0^{-1}\iota(\gamma)h )- C_{h_0}(\gamma \tilde{u})}.\label{e:what1}
\end{align}
Note for later reference that
\begin{eqnarray}
C_{h_0}(\gamma\tilde{u}) =  \av f((\gamma\tilde{u},h_0),h_0^{-1}g)\,dg =
 \av f((\gamma\tilde{u},h_0),h_0^{-1}\iota(\gamma)g)\,dg.
\label{e:what2}
\end{eqnarray}
Going back to \eqref{e:what1}, using the fact that $S^1$ is abelian, \eqref{e:cocycle3}, and the $\Gamma$-invariance of $\tilde{f}$,
\begin{align*}
\begin{split}
\tilde{f}((\gamma\tilde{u},h_0),h_0^{-1}\iota(\gamma)h) &= \tilde{f}((\gamma\tilde{u},h_0)\iota(\gamma),h_0^{-1}h) + f((\gamma\tilde{u},h_0),\iota(\gamma))\\
& = \tilde{f}((\gamma\tilde{u},\iota(\gamma)h_0),h_0^{-1}h) + f((\gamma\tilde{u},h_0),\iota(\gamma))\\
&= \tilde{f}((\tilde{u},h_0),h_0^{-1}h) + f((\gamma\tilde{u},h_0),\iota(\gamma)).
\end{split}
\end{align*}
The second term depends on $\tilde{u}$ but not on $h$. Thus, using \eqref{e:what2},
\begin{align*}
\begin{split}
\tilde{f}((\gamma\tilde{u},h_0),h_0^{-1}\iota(\gamma)h) - C_{h_0}(\gamma\tilde{u})&= \tilde{f}((\tilde{u},h_0),h_0^{-1}h) - C_{h_0}(\tilde{u}).
\end{split}
\end{align*}
This tells us that $\tilde{E}(\gamma(\tilde{u},h)) = \tilde{E}(\tilde{u},h)$, as desired.

(4) \emph{The $\Gamma$-invariant smooth maps $\tilde{E}: \tilde{U}\times S^1 \to \C^*$ patch up to a smooth map $E: L \to \C^*$.  }

\emph{Proof}. Let $(\tilde{u},h_0) \in \tilde{U}\times S^1$ correspond to $(\tilde{u}', \psi(\tilde{u}')h_0)$ in a different uniformizing chart $\tilde{U}' \times S^1$, where $\tilde{u}\mapsto\tilde{u}'$ represents a transition map of $D$ and $\psi(\tilde{u}') \in S^1$ represents the associated transition map of the principal orbibundle $L$. Because $f$ is a globally defined function on $L \times S^1$, the lift $\tilde{f}'$ of $f$ to $\tilde{U}' \times S^1$ satisfies for all $h \in S^1$ that
$$\tilde{f}((\tilde{u},h_0),h_0^{-1}h) = \tilde{f}'((\tilde{u}',\psi(\tilde{u}')h_0),h_0^{-1}h) = \tilde{f}'((\tilde{u}',h_0'),(h_0')^{-1}h'),$$
where $h_0' = \psi(\tilde{u}')h_0 \in S^1$ and $h' = \psi(\tilde{u}')h \in S^1$. The average of the left-hand side with respect to $dh$ is the constant $C_{h_0}(\tilde{u})$, and the average of the right-hand side with respect to $dh'$ is the analogous constant $C'_{h_0'}(\tilde{u}')$ defined using the chart $\tilde{U}' \times S^1$. Since $h' = \psi(\tilde{u}')h$, we obtain that
$$C_{h_0}(\tilde{u}) = C'_{h_0'}(\tilde{u}').$$
Thus, writing $\tilde{E}'$ for the analog of $\tilde{E}$ defined using the chart $\tilde{U}' \times S^1$,
$$\tilde{E}(\tilde{u},h) = \tilde{E}_{h_0}(\tilde{u},h) = \tilde{E}_{h_0'}'(\tilde{u}', h') = \tilde{E}'(\tilde{u}', h').$$
This is the desired patching property. Notice that $h_0'$ depends not only on $h_0$ but also on $\tilde{u}'$.

(5) \emph{The smooth gauge transformation $E$ satisfies $F^E(\ell,g) = g^{\alpha} $ for all $\ell \in L$, $g \in S^1$.}

\emph{Proof}. Let $\varphi: \tilde{U} \to U$ be a uniformizing chart for $D$ with $p(\ell) \in U$. Let  $\tilde{\ell} = (\tilde{u},h) \in \tilde{U}\times S^1$ be a lift of $\ell \in L|_U = (\tilde{U}\times S^1)/\Gamma$ under the local group $\Gamma$. Then $\tilde\ell g$ is a lift of $\ell g$ by construction. It therefore suffices to verify for an arbitrary $h_0 \in S^1$ that
$$\tilde{E}_{h_0}(\tilde{\ell}g)^{-1} \tilde{F}(\tilde{\ell},g) \tilde{E}_{h_0}(\tilde{\ell}) = g^{\alpha}.$$
This is straightforward. Indeed, by definition,
$$
\tilde{E}_{h_0}(\tilde{\ell}g)^{-1} \tilde{F}(\tilde{\ell},g) \tilde{E}_{h_0}(\tilde{\ell}) = e^{-\tilde{f}((\tilde{u},h_0),h_0^{-1}hg)+ C_{h_0}(\tilde{u})} g^{\alpha} e^{\tilde{f}(\tilde{\ell},g)} e^{\tilde{f}((\tilde{u},h_0),h_0^{-1}h) - C_{h_0}(\tilde{u})},
$$
so it suffices to show that
$$\tilde{f}((\tilde{u},h_0),h_0^{-1}hg) = \tilde{f}((\tilde{u},h),g)+\tilde{f}((\tilde{u},h_0),h_0^{-1}h).$$
But this is immediate from the cocycle property \eqref{e:cocycle3}.

As explained above, property (5) concludes the proof of the theorem.
\end{proof}

\section{Review and extension of a theorem of Chi Li}\label{s:chili}

Our goal in this appendix is to prove Theorem \ref{chili}, an orbifold version of a difficult result of Li \cite[Thm 1.5]{ChiLi}. We review Li's proof in Section \ref{ss:ChiLiReview}. Fortunately, the most difficult steps carry over verbatim to the spaces studied in this paper. The remaining steps need to be generalized and this generalization is almost completely formal as well. However, to reach the point where the manifold case and the orbifold case become formally identical, we need one nonobvious fact:~the deformation of an orbifold to the normal cone of a suborbifold is trivial on small open sets. This fact allows us to encode properties of this deformation in terms of \v{C}ech cohomology classes in the same way as in the manifold case. We will explain this in Section \ref{ss:ChiLiExtension} as a consequence of Proposition \ref{gauge}.

In this section, cohomology means \v{C}ech cohomology with respect to sufficiently fine coverings by Stein open sets, with coefficients in the sheaf of sections of a holomorphic (orbi-)vector bundle.

\subsection{Review}\label{ss:ChiLiReview} In \cite{ChiLi}, Li considers the following situation. Let $D$ be a smooth compact complex manifold with $\dim D \geq 2$, let $L$ be an ample holomorphic line bundle on $D$, and let the affine cone $C$ be the dual of $L$ with its zero section contracted to a point $o$. Suppose $X$ is a projective variety, $U$ is an open subset of $X^{\rm reg}$, and $D$ is embedded into $U$ as a complex submanifold with normal bundle $L$. Consider the deformation of $X$ to the normal cone of $D$ as in Theorem \ref{thm:deform-tech}. This is a $\C^*$-equivariant degeneration $\{X_t\}_{t \in \C}$ covering the $\C^*$-action of weight $1$ on the base. The central fiber $X_0$ is a projective variety, which may not be normal but is always normalized by the compactified cone $C \cup D$.  Assume (crucially) that $X_0$ actually is normal and is therefore equal to $C \cup D$.

Li's main result for our purposes \cite[Thm 1.5]{ChiLi} may then be stated as follows:

\begin{theorem}\label{main-chili}
In the above situation, the supremum of all positive integers $k$ such that the affine family $\{X_t \setminus D\}_{t\in\C}$ becomes trivial after base change to ${\rm Spec}\;\C[t]/(t^k)$ is equal to the supremum of all positive integers $k$ such that the embedding $D\hookrightarrow U$ is $(k-1)$-comfortable.
\end{theorem}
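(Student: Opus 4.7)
The plan is to recast both sides of the claimed equality as \v{C}ech cohomological obstructions living on the same cover of a neighborhood of $D$ in $U$, and then exhibit an explicit isomorphism between the two obstruction theories. First I would fix a cover $\{V_\alpha\}$ of a tubular neighborhood of $D$ by Stein open sets on which both $N = L^{-1}$ and the embedding $D \hookrightarrow U$ admit local trivializations $\psi_\alpha : V_\alpha \to W_\alpha \subset N$ identifying $D \cap V_\alpha$ with the zero section of $N|_{D\cap V_\alpha}$. By definition, the embedding $D \hookrightarrow U$ is $(k-1)$-comfortable precisely when the $\psi_\alpha$ can be chosen so that the transition cocycle $\psi_{\alpha\beta} := \psi_\alpha \circ \psi_\beta^{-1}$ differs from the identity by a section of the sheaf of holomorphic vector fields on $N$ that vanish to order $k$ along the zero section.

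Next, I would encode the triviality of the deformation to the normal cone $\mathcal{X} \to \C$ in the same language. The key local fact is that on each $V_\alpha$ the family $\mathcal{X}$ is $\C^*$-equivariantly biholomorphic to the product $W_\alpha \times \C$, so the global family is determined by a $\C^*$-equivariant \v{C}ech cocycle $\Psi_{\alpha\beta}(t)$ on the overlaps. Expanding in powers of the family parameter, $\Psi_{\alpha\beta}(t) = \mathrm{Id} + \sum_{j \geq 1} t^j \Psi^{(j)}_{\alpha\beta}$, and $\C^*$-equivariance forces each $\Psi^{(j)}_{\alpha\beta}$ to have a prescribed weight under the Euler vector field on $N$. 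Triviality of the restricted family $\{X_t \setminus D\}_{t\in\C}$ after base change to $\mathrm{Spec}\,\C[t]/(t^k)$ translates into the existence of $\C^*$-equivariant local gauges $\Phi_\alpha(t)$ with $\Phi_\alpha \Psi_{\alpha\beta} \Phi_\beta^{-1} \equiv \mathrm{Id} \pmod{t^k}$ on every overlap.

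The bridge between the two pictures is the observation that the $\C^*$-equivariance of $\mathcal{X}$ allows one to recover $\psi_{\alpha\beta}$ from $\Psi_{\alpha\beta}(1)$, and that the Taylor expansion of $\Psi_{\alpha\beta}(t)$ in $t$ corresponds, via the grading by the fiber-weight on $N$, bijectively with the Taylor expansion of $\psi_{\alpha\beta} - \mathrm{Id}$ in powers of the normal coordinate along $D$. Under this dictionary, a comfortable refinement of the $\psi_\alpha$ gives rise to gauges trivializing the jet family, and conversely. The hard part will be making this dictionary precise: one has to check that the \v{C}ech coboundary operations on the two sides are intertwined, which amounts to an isomorphism of bigraded complexes of sheaves of holomorphic vector fields on $N$ tangent to the bundle projection. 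Once this is in place, the equality of the two suprema follows by matching cohomological obstructions order by order in $k$.
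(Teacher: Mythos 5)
There is a genuine gap, in two places. First, you localize both sides of the equality to a tubular neighborhood of $D$ in $U$, but the left-hand side concerns the affine family $\{X_t\setminus D\}_{t\in\C}$, whose central fiber is the cone $C$ together with its apex $o$. Triviality of this family modulo $t^k$ is a statement about the whole affine variety, and its obstruction naturally lives in $T^1_C$, a space attached to the isolated singularity $o$ rather than to a neighborhood of $D$. The heart of Li's proof is precisely the comparison \eqref{goodsequence1}: the restriction $T^1_C \hookrightarrow H^1(Y,\Theta_Y)$ and the isomorphism $H^1(L,\Theta_L)\to H^1(Y,\Theta_Y)$ on a finite annulus $Y$, which are Hartogs-type extension statements using the normality and isolatedness of the cone singularity and $\dim D\geq 2$, together with the identification of the image of $\mathbf{KS}^{(k)}$ with the image of the class $\theta_k$ defined near $D$ (\cite[Prop 2.15]{ChiLi}). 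Your ``dictionary'' between the $t$-expansion of $\Psi_{\alpha\beta}$ and the normal-coordinate expansion of $\psi_{\alpha\beta}$ is the comparatively easy, formal part; without the annulus comparison there is no reason a trivialization of the $k$-jet of the family near $D$ extends across the apex, nor conversely.

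Second, even after localizing, ``matching cohomological obstructions order by order'' cannot be purely formal, because the two obstructions do not live in the same group. The obstruction to raising the vanishing order of the family from $k$ to $k+1$ is $\theta_k\in H^1(L,\Theta_L)[-k]$, while the obstruction to passing from $(k-1)$-comfortable to $k$-comfortable is the pair $(\mathfrak{g}_k,\mathfrak{h}_k)$ with $\mathfrak{g}_k\in H^1(D,\Theta_D\otimes (N_D^*)^k)$ and $\mathfrak{h}_k\in H^1(D,(N_D^*)^k)$; these are related to $\theta_k$ only through the exact sequence \eqref{goodsequence2}, and the equivalence of their vanishing with that of $\theta_k$ holds only because $H^1(D,(N_D^*)^k)=0$ by Kodaira--Nakano. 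This is exactly where the positivity of $N_D$ and the hypothesis $\dim D\geq 2$ enter (cf.~Lemma \ref{lem:whohoo}, which for the same reason lets one conflate comfortable embeddedness with the chart-level linearizability condition you actually wrote down). Your proposal never invokes either hypothesis, so the claimed isomorphism of bigraded complexes, if it existed, would prove the statement in settings (e.g.~$\dim D=1$) where the two notions and the two obstruction spaces genuinely diverge; the step that breaks is the identification of the two obstruction theories, which is an exact-sequence-plus-vanishing argument rather than an isomorphism.
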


The first quantity appearing in this statement is simply the vanishing order of the affine family; see Definition \ref{negativeweight}(5). The notion of comfortable embeddedness is much more subtle and we will not
\newpage

\noindent state the general definition here \cite[Defn 3.1]{abt}. It is a concept valid for any complex submanifold of a complex manifold. However, for a divisor of dimension at least $2$ with positive normal bundle it turns out to be equivalent to the more familiar notion of triviality of infinitesimal neighborhoods. In order to explain, we recall the following definition \cite[Defn 4.1]{abt}:

\begin{definition}\label{defn-linble}
A complex submanifold $S$ of a complex manifold $M$ is \emph{$k$-linearizable} if its $k$-th infinitesimal neighborhood $(S, \mathcal{O}_M/\mathcal{J}_S^{k+1})$ in $M$ is isomorphic to its $k$-th infinitesimal neighborhood $(S, \mathcal{O}_{N}/\mathcal{J}_S^{k+1})$ in the normal bundle $N = N_{S/M}$, where $S$ is identified with the
zero section of $N$.
\end{definition}

$(k-1)$-comfortable is an intermediate condition between $(k-1)$-linearizable and $k$-linearizable. In our situation, the embedding $D \hookrightarrow U$ is $(k-1)$-comfortable if and only if the complex structures on $U$ and on $C$ are asymptotic at rate $O(r^{-k/\delta})$ with respect to any K\"ahler cone metric of the form $i\partial\overline\partial h^{-\delta}$ on $C$, where $h$ is any positively curved Hermitian metric on $L$ and $\delta > 0$ \cite[Prop 1.3]{ChiLi}. In particular, the embedding $D \hookrightarrow U$ is always $0$-comfortable,  and the complex structures on $U$ and on $C$ are always asymptotic at rate $O(r^{-1/\delta})$, but the embedding is $1$-linearizable if and only if its tangent sequence splits \cite[Rmk A.5]{ChiLi}, which is not always true \cite[Section 3]{Conlon3}.

\begin{lemma}\label{lem:whohoo}
If $\dim M \geq 3$ and $S$ is a smooth compact divisor in $M$ with positive normal bundle, then $S$ is $(k-1)$-comfortably embedded in $M$ if and only if $S$ is $(k-1)$-linearizable in $M$.
\end{lemma}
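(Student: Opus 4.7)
The plan is to first note that the implications
\[
k\text{-linearizable} \Longrightarrow (k-1)\text{-comfortable} \Longrightarrow (k-1)\text{-linearizable}
\]
hold in general by the definitions of \cite{abt}, so only the converse direction $(k-1)$-linearizable $\Rightarrow$ $(k-1)$-comfortable requires argument.

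To set this up, I would fix a $(k-1)$-linearization, i.e.\ an isomorphism of thickened complex spaces $(S,\mathcal{O}_M/\mathcal{J}_S^k) \cong (S,\mathcal{O}_N/\mathcal{J}_S^k)$, where $N = N_{S/M}$ and $\mathcal{J}_S$ denotes the ideal sheaf of $S$ in $M$ or in the total space of $N$. Following the obstruction calculus of \cite[\S 3]{abt}, the additional datum promoting this to a $(k-1)$-comfortable embedding is a certain compatible first-order normal form (essentially a jet of a holomorphic retraction with prescribed differential along $S$), and the obstruction to producing it from the given linearization can be represented as a \v{C}ech $1$-cocycle with values in a coherent sheaf on $S$ of the schematic form $\mathcal{F}_k = E \otimes \mathrm{Sym}^{k} N^*$, where $E$ is a fixed holomorphic vector bundle on $S$ built from $TS$, $N$, and their duals.

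The key point is then Kodaira vanishing. Since $N$ is positive by hypothesis, $\mathrm{Sym}^k N^* \cong N^{-k}$ is a negative line bundle; and since $\dim S = \dim M - 1 \geq 2$, we have $H^1(S, \mathcal{F}_k) = H^1(S, E \otimes N^{-k}) = 0$ for every $k \geq 1$ after twisting by a sufficiently positive power built out of $N$ or simply by direct application of Kodaira-Akizuki-Nakano to $K_S + (K_S^{-1} \otimes E \otimes N^{-k})$. Either way, the positivity of $N$ combined with $\dim S \geq 2$ is precisely what is needed to kill the obstruction. Consequently, the obstruction cocycle is a coboundary, the given $(k-1)$-linearization can be modified to a $(k-1)$-comfortable one, and the lemma follows.

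The main obstacle I expect is the bookkeeping: translating the definition of $(k-1)$-comfortableness from \cite{abt} into a concrete \v{C}ech cocycle with values in a sheaf of the above form $E \otimes \mathrm{Sym}^k N^*$, and checking that the fixed bundle $E$ depends only on $TS$ and $N$ and not on $k$ (so that Kodaira vanishing is uniform in $k$). The dimension assumption $\dim S \geq 2$ is crucial for Kodaira vanishing, and the positivity of $N$ is what makes the relevant twist negative; relaxing either hypothesis would allow non-comfortable linearizable embeddings, in line with the examples in \cite[Section 3]{Conlon3}.
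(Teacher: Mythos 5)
Your overall strategy --- reduce to the implication $(k-1)$-linearizable $\Rightarrow (k-1)$-comfortable and kill the remaining obstruction by Kodaira vanishing using $N>0$ and $\dim S\geq 2$ --- is the same as the paper's, which simply invokes \cite[Rmk A.7]{ChiLi}. But there is a genuine gap in your vanishing step. You place the obstruction in $H^1(S,E\otimes(N^*)^{k})$ for a ``fixed vector bundle $E$ built from $TS$, $N$, and their duals'' and assert that this group vanishes for every $k\geq 1$ once $E$ is fixed. That is false: Kodaira--Akizuki--Nakano is a statement about line bundles (or about $\Omega^p$ twisted by a positive line bundle), and for a higher-rank $E$ the twist $E\otimes N^{-k}$ is only negative for $k\gg 0$ (Serre-type vanishing), not for all $k\geq 1$. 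Worse, if $E$ contains a $\Theta_S$ factor, then $H^1(S,\Theta_S\otimes(N^*)^{k})$ is exactly the home of the Abate--Bracci--Tovena splitting obstruction $\mathfrak{g}_k$, which does \emph{not} vanish in general --- the case analysis in the proof of Theorem \ref{main-chili} hinges on $\mathfrak{g}_k\neq 0$ being possible, and \cite[Section 3]{Conlon3} gives an example where already the tangent sequence fails to split. An argument that killed that group would prove far too much.

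The missing idea is the reduction of the obstruction to a line bundle coefficient. A $(k-1)$-linearization already supplies the splitting and atlas data whose failure is measured by the $\Theta_S\otimes(N^*)^{j}$-valued classes; what separates $(k-1)$-linearizable from $(k-1)$-comfortable is only the normal component $\mathfrak{h}_j$ of the obstruction, which lives in $H^1\bigl(S,N\otimes(N^*)^{\otimes(j+1)}\bigr)=H^1\bigl(S,(N^*)^{j}\bigr)$ --- an honest power of the negative line bundle $N^*$, with no $\Theta_S$ factor. For this group Serre duality gives $H^1(S,(N^*)^{j})\cong H^{n-1}(S,K_S\otimes N^{j})^*$ with $n=\dim S$, and Kodaira--Akizuki--Nakano kills it precisely because $N^{j}>0$ and $n-1\geq 1$, i.e.\ $\dim S\geq 2$. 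So the hypotheses enter exactly where you expected, but only after the coefficient sheaf has been pinned down to $(N^*)^{j}$; with $E$ left unspecified, and in particular possibly containing $\Theta_S$, the argument does not close.
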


This is proved by applying the Kodaira-Nakano vanishing theorem on $S$ \cite[Rmk A.7]{ChiLi}. Thus, in our situation, $(k-1)$-linearizability (rather than $k$-linearizability) is actually already enough to ensure $O(r^{-k/\delta})$ convergence of the complex structures with respect to $i\partial\overline\partial h^{-\delta}$.

We now comment on the proof of Theorem \ref{main-chili}. The main point is a remarkable comparison of cohomology classes. Using ideas of Artin-Schlessinger, and crucially using the fact that $X_0 \setminus D = C$ has a {normal} and {isolated} singularity, one first constructs a Kodaira-Spencer type class $\mathbf{KS}^{(k)}$ in $T^1_C$ (a finite-dimensional cohomology space attached to the punctured cone $C \setminus \{o\}$) if the vanishing order of the affine family is at least $k$. This satisfies $\mathbf{KS}^{(k)} = 0$ if and only if the vanishing order is at least $k+1$. On any finite annulus $Y \subset C \setminus \{o\} = L \setminus D$ we have two restriction maps
\begin{equation}\label{goodsequence1}
T^1_C \hookrightarrow H^1(Y,\Theta_Y) \stackrel{\cong}{\leftarrow} H^1(L,\Theta_L).
\end{equation}
Then the key result of Li's paper \cite[Prop 2.15]{ChiLi} identifies the image of $\mathbf{KS}^{(k)}$ in $H^1(Y,\Theta_Y)$ with the image of the so-called $k$-th order Kodaira-Spencer class $\theta_k \in H^1(L,\Theta_L)$ of the deformation to the normal cone in $U \supset D$. The class $\theta_k$ can be defined only if the embedding $D \hookrightarrow U$ is at least $(k-1)$-comfortable, but then its definition is analogous to the definition of the classical Kodaira-Spencer class of a deformation of complex manifolds in terms of a suitable atlas.

By construction, $\theta_k$ lies in the $(-k)$-weight space $H^1(L,\Theta_L)[-k] \subset H^1(L,\Theta_L)$ with respect to the natural action of $\C^*$ on $C$ or on $L$. \cite[Prop 3.3]{ChiLi} provides us with an exact sequence
\begin{equation}\label{goodsequence2}
H^{1}(D,(N_{D}^{*})^{k})\to H^1(L,\Theta_L)[-k]\to
H^{1}(D,\Theta_{D}\otimes(N_{D}^{*})^{k})
\end{equation}
under which $\theta_k$ maps to the obstruction, $\mathfrak{g}_{k}$, to $k$-splitting of the embedding $D \hookrightarrow U$ \cite[Defn 2.1]{abt}. $k$-splitting is a necessary condition for being able to define $k$-comfortable embeddedness. If $\mathfrak{g}_k = 0$, then \cite[Prop 3.3]{ChiLi} also shows that $\theta_k$ lifts to the obstruction, $\mathfrak{h}_k$, to the latter property.

To prove Theorem \ref{main-chili}, we can assume by induction that the vanishing order of the affine family is at least $k$ and that the embedding is at least $(k-1)$-comfortable. If $\mathfrak{g}_k \neq 0$, then the embedding cannot be $k$-comfortable by definition. On the other hand, necessarily $\theta_k \neq 0$ and hence $\mathbf{KS}^{(k)} \neq 0$, so the vanishing order must be equal to $k$. If $\mathfrak{g}_k = 0$, then the embedding is at least $k$-comfortable if and only if $\mathfrak{h}_k = 0$. However, $H^1(D, (N_D^*)^k) = 0$ by the Kodaira-Nakano vanishing theorem (this step is essentially equivalent to the proof of Lemma \ref{lem:whohoo}), so $\mathfrak{h}_k = 0$ trivially. But then $\theta_k$ vanishes as well, so $\mathbf{KS}^{(k)} = 0$, so the vanishing order is at least $k + 1$. This proves Theorem \ref{main-chili}.

\newpage

\subsection{Extension}\label{ss:ChiLiExtension}

We would now like to generalize Theorem \ref{main-chili} to the setting of this paper, where $D$ is a compact complex orbifold, $L$ is a positive holomorphic orbifold line bundle on $D$, and $D$ is embedded as an admissible divisor with normal orbibundle $L$ into some open subset $U \subset X$ which is itself an orbifold. By the definition of an admissible divisor, $U \setminus D$ is actually a manifold. We are also still assuming that $\dim D \geq 2$ and that $X_0$ is normal, so that $X_0 = C \cup D$.

Formally the definitions of $k$-splitting, $k$-comfortable and $k$-linearizable in \cite{abt} still make sense for an orbifold embedding $S \hookrightarrow M$ because they only involve the sheaves $\mathcal{J}_S \subset \mathcal{O}_M$ and $\mathcal{J}_S \subset \mathcal{O}_{N_{S/M}}$. Thus, the verbatim extension of Theorem \ref{main-chili} to our new setting is meaningful as well:

\begin{theorem}\label{chili}
In the above situation, the supremum of all positive integers $k$ such that the affine family $\{X_t \setminus D\}_{t\in\C}$ becomes trivial after base change to ${\rm Spec}\;\C[t]/(t^k)$ is equal to the supremum of all positive integers $k$ such that the embedding $D\hookrightarrow U$ is $(k-1)$-comfortable.
\end{theorem}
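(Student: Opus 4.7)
The plan is to follow Li's proof of Theorem \ref{main-chili} essentially verbatim, substituting orbifold sheaves, orbifold \v{C}ech cohomology, and orbifold Kodaira--Nakano vanishing for their manifold counterparts, and checking that each geometric construction retains its meaning. Throughout I would work with orbifold uniformizing charts near $D$ of the form $(\tilde U,\Gamma)$ where $\Gamma$ acts on $\tilde U$ preserving a smooth divisor $\tilde D$ with positive normal line bundle (this is admissibility of $D$), and interpret $H^{\bullet}(D,\cdot)$, $H^{\bullet}(L,\cdot)$ as \v{C}ech cohomology with respect to coverings by such charts, with invariant local sections on overlaps.

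First I would construct the Artin--Schlessinger obstruction class $\mathbf{KS}^{(k)}\in T^1_C$ attached to the affine family $\{X_t\setminus D\}$, exactly as in \cite{ChiLi}. This step is insensitive to any orbifold structure at infinity because it depends only on the deformation theory of $C$ as a normal affine variety with an isolated singularity at $o$; in particular $T^1_C$ is finite-dimensional and the characterization $\mathbf{KS}^{(k)}=0$ iff the family is trivial modulo $t^{k+1}$ is an algebraic statement that transfers without change. The restriction diagram
\begin{equation*}
T^1_C\;\hookrightarrow\;H^1(Y,\Theta_Y)\;\stackrel{\cong}{\leftarrow}\;H^1(L,\Theta_L)
\end{equation*}
for a finite annulus $Y\subset L\setminus D$ is then obtained by repeating Li's local cohomology computations on the orbifold smooth locus $L=C\setminus\{o\}$, which is a manifold away from at most a codimension-$2$ subset, so that the isomorphism on the right follows from orbifold Hartogs extension.

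The geometric half of the proof requires a $k$-th order Kodaira--Spencer class $\theta_k\in H^1(L,\Theta_L)[-k]$ arising from the deformation of $U$ to the normal cone of $D$. To define $\theta_k$ as a \v{C}ech $1$-cocycle one needs the deformation to be locally trivial, in a $\Gamma$-equivariant sense, over any sufficiently small orbifold chart meeting $D$; this is precisely the content of Proposition \ref{gauge} of Appendix \ref{s:deform-to-nc}. Granted that local triviality, the transition cocycle computation of \cite[Prop 2.15]{ChiLi} and the image identification $\mathbf{KS}^{(k)}\mapsto[\theta_k]$ in $H^1(Y,\Theta_Y)$ go through with $\Gamma$-equivariant local biholomorphisms in place of local biholomorphisms. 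Similarly the exact sequence
\begin{equation*}
H^1(D,(N_D^*)^k)\;\longrightarrow\;H^1(L,\Theta_L)[-k]\;\longrightarrow\;H^1(D,\Theta_D\otimes(N_D^*)^k)
\end{equation*}
of \cite[Prop 3.3]{ChiLi}, together with the interpretation of the image of $\theta_k$ on the right as the $k$-splitting obstruction $\mathfrak{g}_k$ and of any lift on the left as the comfortability obstruction $\mathfrak{h}_k$, is obtained by averaging Li's local constructions over local uniformizing groups.

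Finally I would close the induction on $k$ exactly as in Li. Assuming by inductive hypothesis that the family vanishes to order $\geq k$ and the embedding is $(k-1)$-comfortable, two cases occur. If $\mathfrak{g}_k\neq 0$, then by definition $D\hookrightarrow U$ is not $k$-comfortable; on the other hand the image of $\theta_k$ in $H^1(D,\Theta_D\otimes (N_D^*)^k)$ is nonzero, hence $\theta_k\neq 0$, hence $\mathbf{KS}^{(k)}\neq 0$, so the vanishing order is exactly $k$. If $\mathfrak{g}_k=0$, then $\theta_k$ lifts to $\mathfrak{h}_k\in H^1(D,(N_D^*)^k)$; but the orbifold Kodaira--Nakano vanishing theorem (applied to the positive line orbibundle $L=N_D$ over the compact complex orbifold $D$ with $\dim D\geq 2$, in the same way as in Claim 1 of the proof of Theorem \ref{wh}) yields $H^1(D,(N_D^*)^k)=0$, so $\mathfrak{h}_k=0$, hence $\theta_k=0$, hence $\mathbf{KS}^{(k)}=0$, and both the vanishing order and the comfortable embedding order advance to $k+1$. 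The main obstacle I expect is purely bookkeeping: verifying that all of Li's local transition cocycles, splittings, and exponential-type adjustments can be chosen $\Gamma$-equivariantly on uniformizing charts and patch to well-defined orbifold \v{C}ech classes. Proposition \ref{gauge} is what makes this possible, and once it is in hand the rest of the argument is a mechanical transcription of \cite{ChiLi}.
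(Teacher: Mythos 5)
Your proposal is correct and follows essentially the same route as the paper: keep the intrinsic Artin--Schlessinger side ($\mathbf{KS}^{(k)}\in T^1_C$ and its restriction to the annulus) verbatim, and redo Li's \v{C}ech-cocycle constructions of $\theta_k$, $\mathfrak{g}_k$, $\mathfrak{h}_k$ in adapted orbifold charts, closing the induction with orbifold Kodaira--Nakano vanishing. You correctly identify Proposition \ref{gauge} (which sits in Appendix \ref{s:chili}, not Appendix \ref{s:deform-to-nc}) as the one non-formal ingredient -- the paper's point being that naive $\Gamma$-equivariance of the cocycles actually fails unless the local groups are put in block-\emph{diagonal} (not merely block upper triangular) form, which is exactly what that proposition supplies.
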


We now need to ask whether this theorem is true and whether the $(k-1)$-comfortable property has the same meaning in terms of complex structure asymptotics as in the manifold case. Fortunately, the construction and properties of $\mathbf{KS}^{(k)}$ as well as the embedding $T^1_C \hookrightarrow H^1(Y,\Theta_Y)$ of \eqref{goodsequence1} can be stated and proved intrinsically in terms of the affine family $\{X_t \setminus D\}_{t \in \C}$. Thus, as far as this part of the proof is concerned, the manifold setting and the orbifold setting are actually identical.

On the other hand, the fact that the restriction map $H^1(L,\Theta_L) \to H^1(Y,\Theta_Y)$ is an isomorphism in every degree \cite[Lemma 4.6]{ChiLi}, the construction of the exact sequence \eqref{goodsequence2}, the construction and properties of the classes $\theta_k, \mathfrak{g}_k, \mathfrak{h}_k$, and the interpretation of comfortable embeddedness in terms of complex structure asymptotics need to be generalized from manifolds to orbifolds. In the manifold case, all of these properties follow from explicit computations with atlases and \v{C}ech cocycles. Thus, one would expect that these computations can be extended verbatim to the orbifold case by using orbifold atlases and invariant \v{C}ech cocycles on the local uniformizing charts. Perhaps surprisingly, this is \emph{not} true unless the orbifold atlases used satisfy a strong compatibility condition.

\begin{definition}\label{defn:adat}
Let $M^n$ be a complex orbifold. Let $S^{n-k} \subset M^n$ be a complex suborbifold. Call a holomorphic orbifold atlas $\{(U_\alpha,\Gamma_\alpha,\varphi_\alpha)\}_{\alpha\in A}$ of $M$ \emph{adapted to $S$} if, for all $\alpha\in A$:
\begin{enumerate}
\item $\Gamma_\alpha$ is a subgroup of $\text{GL}(n,\C)$ acting linearly on $U_\alpha \subset \C^n$;
\item $\varphi_\alpha^{-1}(S) = \{z \in U_\alpha: z_{n-k+1} = \cdots = z_n = 0\}$ in the standard coordinates of $\C^n$; and
\item $\Gamma_\alpha$ is actually contained in the block diagonal subgroup ${\rm GL}(n-k,\C) \times {\rm GL}(k,\C)$.
\end{enumerate}
\end{definition}

We have been unable to find this definition in the literature. It is a classical fact that (1) by itself can be achieved for the local uniformizing groups of any complex orbifold, and (2) by itself is simply the definition of a complex suborbifold. (1) and (2) imply that the groups $\Gamma_\alpha$ are contained in the block \emph{upper triangular} matrices relative to the splitting $\C^n = \C^{n-k} \oplus \C^k$, and then the induced atlas on the total space of the normal bundle to $S$ in $M$ satisfies (3). Taken together, (1)--(3) imply that the deformation of $M$ to the normal cone of $S$ is trivial on small open sets.

We will now prove that adapted atlases always exist in our situation. After this, we will briefly explain why the adapted condition allows us to generalize the computations in \cite{ChiLi} to orbifolds.

\begin{prop}\label{gauge}
Let $U \subset \C^{n}$ be a domain containing the origin. Let $\Gamma$ be a finite cyclic subgroup of ${\rm Aut}(U)$ fixing the origin and preserving the section $H = U \cap \{z_n = 0\}$. Then there exist domains $\tilde{U} \subset U$ and $U' \subset \C^n$ containing the origin, with sections $\tilde{H} = \tilde{U} \cap \{z_n = 0\}$ and $H' = U' \cap \{z_n = 0\}$, and an isomorphism $\psi: \tilde{U} \to U'$ with $\psi(0) = 0$ and $\psi(\tilde{H}) = H'$ such that $\tilde{U}$ is $\Gamma$-invariant and such that $\psi \circ \Gamma \circ \psi^{-1}$ acts on $U'$ by linear transformations in ${\rm GL}(n-1,\C) \times \C^*$.
\end{prop}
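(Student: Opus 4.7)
The plan is to linearize the action of $\Gamma$ near the origin and then exploit the semi-simplicity of finite-group representations to diagonalize it with respect to the splitting induced by the hyperplane $H$. We proceed in three steps.

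First, we apply the Bochner--Cartan linearization trick. Choose a ball $B \subset U$ about the origin small enough that $\tilde{U} := \bigcap_{\gamma \in \Gamma} \gamma(B)$ is a $\Gamma$-invariant open neighborhood of $0$ on which the averaging map
\[
\psi_1(z) := \frac{1}{|\Gamma|}\sum_{\gamma \in \Gamma} (d\gamma|_0)^{-1}\gamma(z)
\]
is defined. A standard check shows $\psi_1(0) = 0$, $d\psi_1|_0 = \mathrm{id}$, and $\psi_1 \circ \gamma_0 = d\gamma_0|_0 \circ \psi_1$ for every $\gamma_0 \in \Gamma$. Because each $\gamma \in \Gamma$ preserves $H = \{z_n = 0\}$ set-theoretically, the $n$th component $\gamma_n(z)$ is divisible by $z_n$, and each $d\gamma|_0$ preserves the tangent hyperplane $\{z_n = 0\}$; hence $\psi_1$ sends $\tilde{U} \cap \{z_n = 0\}$ into $\{z_n = 0\}$. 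Shrinking $\tilde{U}$ to a neighborhood on which $\psi_1$ is biholomorphic onto its image (using the inverse function theorem, since $d\psi_1|_0 = \mathrm{id}$) and noting that the $n$th component of $\psi_1$ factors as $z_n$ times a unit, we obtain $\psi_1(\tilde{U} \cap \{z_n = 0\}) = \psi_1(\tilde{U}) \cap \{z_n = 0\}$. Thus $\Gamma_1 := \psi_1 \Gamma \psi_1^{-1}$ acts on $U_1 := \psi_1(\tilde{U})$ by linear transformations preserving $\{z_n = 0\}$, i.e.\ by matrices of block upper-triangular form $\begin{pmatrix} A & v \\ 0 & a \end{pmatrix}$ relative to $\C^n = \C^{n-1} \oplus \C$.

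Second, we invoke Maschke's theorem: since $\Gamma$ is finite and $\{z_n = 0\}$ is a $\Gamma_1$-invariant subspace of $\C^n$, there exists a $\Gamma_1$-invariant complementary line $L \subset \C^n$. Since $L$ is transverse to $\{z_n = 0\}$, it is spanned by a vector of the form $(u,1)$ for some $u \in \C^{n-1}$. Define the linear isomorphism
\[
T : \C^n \to \C^n, \qquad T(w',w_n) := (w' + u w_n,\, w_n).
\]
Then $T$ fixes the hyperplane $\{w_n = 0\}$ pointwise and sends the $w_n$-axis onto $L$. A direct computation shows $T^{-1}\gamma_1 T (w',w_n) = (Aw',\, aw_n)$ for every $\gamma_1 \in \Gamma_1$ with block form $\begin{pmatrix} A & v \\ 0 & a \end{pmatrix}$, so $T^{-1}\Gamma_1 T$ acts by block-diagonal matrices in $\mathrm{GL}(n-1,\C) \times \C^*$.

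Finally, set $\psi_2 := T^{-1}$, $\psi := \psi_2 \circ \psi_1$, $U' := \psi(\tilde{U})$, $\tilde{H} := \tilde{U} \cap \{z_n = 0\}$, and $H' := U' \cap \{z_n = 0\}$. Since $T$ preserves $\{w_n = 0\}$, $\psi(\tilde{H}) = H'$, and $\psi \Gamma \psi^{-1} \subset \mathrm{GL}(n-1,\C) \times \C^*$ as required. The only mild bookkeeping obstacle is arranging simultaneously that $\tilde{U}$ is $\Gamma$-invariant and that $\psi_1$ is a biholomorphism onto a domain containing the origin whose $\{z_n = 0\}$-slice is precisely the image of $\tilde{H}$; this is handled as above by intersecting translates of a small ball and invoking the inverse function theorem. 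Note that the finiteness (not cyclicity) of $\Gamma$ is what makes both the averaging argument and Maschke's theorem available.
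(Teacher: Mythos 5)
Your proposal is correct. The first step (the Bochner--Cartan averaging map $\psi_1$, its equivariance $\psi_1\circ\gamma = d\gamma|_0\circ\psi_1$, and the bookkeeping needed to make $\tilde U$ invariant and the hyperplane slice match up) is essentially identical to the paper's argument. Where you genuinely diverge is in the block-diagonalization step. The paper uses cyclicity to reduce to a single block upper-triangular generator $A$, conjugates its $(n-1)\times(n-1)$ block to a diagonal unitary matrix, and then solves the linear system $(\lambda_i - d)r_i = c_i$ for a shear $R$; the only subtlety is the resonant case $\lambda_i = d$, which is ruled out by computing the entries of $A^k$ and using that $A$ has finite order. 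You instead invoke Maschke's theorem to produce a $\Gamma_1$-invariant line $L$ complementary to $\{z_n=0\}$, and the shear $T$ carrying the $z_n$-axis onto $L$ block-diagonalizes the entire group at once (your verification $v = au - Au \Rightarrow T^{-1}\gamma_1 T = \operatorname{diag}(A,a)$ is correct). Your route is cleaner: it avoids the case analysis on resonances, never needs cyclicity, and simultaneously block-diagonalizes all of $\Gamma$ rather than one generator — complete reducibility packages the paper's "finite order rules out a Jordan block across the splitting" argument into a single standard fact. The paper's computation buys nothing extra here except explicitness; both proofs are valid.
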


\begin{proof}
Proceeding as in \cite[Lemme 1]{cartan}, we define a holomorphic map $\sigma: U \to \C^n$ by
$$\sigma :=\frac{1}{|\Gamma|}\sum_{\gamma\in\Gamma}(d\gamma)^{-1}\circ \gamma,$$
where the summation, scalar multiplication and action of $d\gamma$ are with respect to the linear structure of $\C^n$. Then $\sigma$ maps $H$ into the hyperplane $\{z_n = 0\}$ because $\Gamma$ preserves $H$. Moreover, $d\sigma|_0 = {\rm Id}$, so $\sigma$ restricts to an isomorphism onto its image on any sufficiently small concentric domain $\tilde{U} \subset U$ and $\sigma(\tilde{H})$ contains a neighborhood of $0$ in $\{z_n = 0\}$. By replacing $\tilde{U}$ by $\bigcap_{\gamma \in \Gamma} \gamma(\tilde{U})$, we can make $\tilde{U}$ invariant under $\Gamma$. Observe that $\sigma \circ \gamma =d\gamma \circ\sigma$ for each $\gamma\in\Gamma$. Thus, the subgroup $\sigma \circ \Gamma \circ \sigma^{-1}$ of ${\rm Aut}(\sigma(\tilde{U}))$ acts by linear transformations preserving the hyperplane $\{z_n = 0\}$. For any sufficiently small invariant open set $V \subset \sigma(\tilde{U})$ with $0 \in V$, the section $V \cap \{z_n = 0\}$ will be contained in $\sigma(\tilde{H})$, so by replacing $\tilde{U}$ by $\sigma^{-1}(V)$ we can assume without loss that $\sigma(\tilde{H}) = \sigma(\tilde{U}) \cap \{z_n = 0\}$.

Let $A \in {\rm GL}(n,\C)$ be a generator of the finite cyclic group $\sigma \circ \Gamma \circ \sigma^{-1}$. Our goal is now to find a matrix $R \in {\rm GL}(n,\C)$ preserving the hyperplane $\{z_n = 0\}$ such that $R A R^{-1}$ is not only block upper triangular but in fact block diagonal with respect to the decomposition $\C^n = \C^{n-1} \oplus \C$. If this can be done, then setting $\psi := R \circ \sigma$ and $U' := \psi(\tilde{U})$ will complete the proof of the proposition.

Since $A$ preserves $H$, we must have that
$$A=\begin{pmatrix} B & {\bf c}  \\ {\bf 0}^T & d\end{pmatrix}$$
with $B \in {\rm GL}(n-1,\C)$, $\mathbf{c}$ and $\mathbf{0}$ column vectors in $\C^{n-1}$, and $d\in\C^*$. Because $A$ generates a finite subgroup of ${\rm GL}(n,\C)$, it follows that $B$ preserves a Hermitian metric on $\C^{n-1}$, hence is conjugate to a unitary matrix in ${\rm GL}(n-1,\C)$ by the Gram-Schmidt process, and hence to a diagonal matrix $\Lambda = {\rm diag}(\lambda_1, \ldots, \lambda_{n-1})$ with all entries unit complex numbers (indeed, roots of unity). Similarly, $d$ is a root of unity as well. We may thus assume without loss of generality that
\begin{align}\label{eq:formofA}
A=\begin{pmatrix}\Lambda & {\bf c}  \\ {\bf 0}^T & d\end{pmatrix}
\end{align}
because if $R$ can be found for $A$ of this form, then the general case follows by multiplying $R$ by an element of the block diagonal subgroup ${\rm GL}(n-1,\C) \times \{1\}$ of ${\rm GL}(n,\C)$.

For $A$ as in \eqref{eq:formofA} we now seek $R\in{\rm GL}(n,\C)$ of the form
$$R=\begin{pmatrix}{\rm Id} & \textbf{r} \\ \textbf{0}^T & 1\end{pmatrix}$$
for some column vector $\mathbf{r} \in \C^{n-1}$ such that
\begin{equation*}
R A R^{-1}=\begin{pmatrix}\Lambda  & \textbf{0}  \\ \textbf{0}^T & d\end{pmatrix}.
\end{equation*}
This is easily seen to be equivalent to the linear system
\begin{equation}\label{linear2}
(\lambda_i-d)r_{i}=c_i\quad(1 \leq i \leq n-1)
\end{equation}
for the components of $\mathbf{r}$ and $\mathbf{c}$. Let $N$ denote the number of $\lambda_{i}$ that are equal to $d$. If $N=0$, then \eqref{linear2} has a solution, so we are done. If $N\geq 1$, then we may assume without loss of generality that $d=\lambda_{1}=\lambda_{2}=\ldots=\lambda_{N}$, and it remains to prove that $c_i = 0$ for all $1 \leq i \leq N$ because then \eqref{linear2} again has a solution. We do so by contradiction. If $c_{i}\neq0$ for some $1\leq i\leq N$, then, for all $k \in \N$, an easy computation shows that
the $(1,i)$-th entry of $A^k$ has absolute value equal to
$$\left|\left(\sum_{j=0}^{k-1} d^j \lambda_i^{k-1-j}\right)c_{i}\right|=|k d^{k-1} c_i |=k|c_{i}|\to\infty\;\,\textrm{as}\;\,k\to\infty,$$
but clearly this is not possible because $A$ is of finite order.
\end{proof}

We now sketch how Proposition \ref{gauge} lets us complete the proof of Theorem \ref{chili}. In the setting of the theorem, let $\{(U_{\alpha},\Gamma_{\alpha},\varphi_{\alpha})\}_{\alpha\in A}$ be an open covering of a neighborhood of $D$ by uniformizing charts of $X$, each of which is an isotropy chart centered at a point of $D$. On each open set $U_{\alpha}$, we have centered holomorphic coordinates ${z}_{\alpha}=(z_{\alpha}^{1},\ldots,z_{\alpha}^{n})$ such that $D\cap U_{\alpha}=\{z^{n}_{\alpha}=0\}$ and $\Gamma_{\alpha}$ is a cyclic subgroup of ${\rm Aut}(U_\alpha)$ (if $\Gamma_\alpha$ was not cyclic, $U_\alpha \setminus D$ could not be a manifold). Proposition \ref{gauge} now allows us to assume without loss of generality that in every uniformizing chart $(U_{\alpha},\Gamma_{\alpha},\varphi_{\alpha})$, the action of $\Gamma_{\alpha}$ on $U_\alpha$ is linear in the coordinates ${z}_{\alpha}$ and the matrices representing the elements of $\Gamma_\alpha$ are block diagonal. Given any two uniformizing charts $(U_{\alpha},\Gamma_{\alpha},\varphi_{\alpha})$ and $(U_{\beta},\Gamma_{\beta},\varphi_{\beta})$ and any point $p \in D\cap\varphi_{\alpha}(U_{\alpha})\cap\varphi_{\beta}(U_{\beta})$, there exists a third uniformizing chart $(U,\Gamma,\varphi)$ with $p\in\varphi(U)$, together with embeddings $\lambda_{\alpha}:U\to U_{\alpha}$ and $\lambda_{\beta}:U\to U_{\beta}$.
We then have a change of coordinates map
$${F}_{\beta\alpha}:=\lambda_{\alpha}\circ\lambda_{\beta}^{-1}:\lambda_{\beta}(U)\subset U_{\beta}
\to\lambda_{\alpha}(U)\subset U_{\alpha}.$$
We write the components of ${F}_{\beta\alpha}$ as
$${F}_{\beta\alpha}({z}_{\beta})=(F^{1}_{\beta\alpha}({z}_{\beta}),\ldots,F^{n}_{\beta\alpha}({z}_{\beta})) = z_\alpha = (z_\alpha^1, \ldots, z_\alpha^n).$$
Clearly, this map is equivariant with respect to the action of $\Gamma$.

Using this setup, a lengthy routine check shows that those parts of the proof of Li's Theorem \ref{main-chili} that rely on computations with \v{C}ech cocycles go through verbatim in the orbifold setting. Here we only point out how this works for one of the most delicate computations, which would actually fail without part (3) of the adapted atlas condition (Definition \ref{defn:adat}). Specifically, we will show that the Abate-Bracci-Tovena cocycle $(\mathfrak{g}_k)_{\beta\alpha}$ (see \cite[Prop 2.2]{abt}) is $\Gamma$-invariant and hence defines an orbifold cohomology class. For clarity, we will only consider the case $k = 1$. By definition,
\begin{equation}\label{equiv}
(\mathfrak{g}_{1})_{\beta\alpha}(p)=\sum_{i=1}^{n-1}\frac{\partial F^{i}_{\beta\alpha}}{\partial z_{\beta}^{n}}\biggr|_{z_\beta(p)}\frac{\partial}{\partial z^{i}_{\alpha}}\biggr|_{z_\alpha(p)}\otimes dz_{\beta}^{n}|_{z_\beta(p)}
\end{equation}
for all $p \in D$. Similarly, for every $\gamma\in\Gamma$ we have that
\begin{equation}\label{equiv2}
(\mathfrak{g}_{1})_{\beta\alpha}(\gamma p)=\sum_{i=1}^{n-1}\frac{\partial F^{i}_{\beta\alpha}}{\partial z_{\beta}^{n}}\biggr|_{z_\beta(\gamma p)}\frac{\partial}{\partial z^{i}_{\alpha}}\biggr|_{z_\alpha(\gamma p)}\otimes dz_{\beta}^{n}|_{z_\beta(\gamma  p)}.
\end{equation}
To prove that $\gamma$ maps {\eqref{equiv} to \eqref{equiv2}, we first note that the action of $\gamma$ is represented by matrices ${^\alpha}\gamma, {^\beta}\gamma \in {\rm GL}(n,\C)$ in the two charts. We will assume for now that these are block upper triangular but not necessarily block diagonal. Then we have that
\begin{align*}
\begin{split}
\gamma \cdot \frac{\partial}{\partial z^{i}_{\alpha}}\biggr|_{z_\alpha(p)} &= \sum_{j=1}^n {^\alpha}\gamma_{ji}\frac{\partial}{\partial z^{j}_{\alpha}}\biggr|_{z_\alpha(\gamma p)},\\
\gamma \cdot dz^n_\beta|_{z_\beta(p)} &= \sum_{k=1}^n ({^\beta}\gamma^{-1})_{nk} dz^k_\beta|_{z_\beta(\gamma p)} = \frac{1}{{^\beta}\gamma_{nn}}dz^n_\beta|_{z_\beta(\gamma p)}.
\end{split}
\end{align*}
Thus, the desired equality, $\gamma \cdot \eqref{equiv} = \eqref{equiv2}$, holds if and only if
\begin{align}\label{desire}
\sum_{i=1}^{n-1} {^\alpha}\gamma_{ji}\frac{\partial F_{\beta\alpha}^i}{\partial z^n_\beta}\biggr|_{z_\beta(p)}=
\begin{cases}{^\beta}\gamma_{nn}\frac{\partial F^j_{\beta\alpha}}{\partial z^n_\beta}\Bigr|_{z_\beta(\gamma p)}&\text{for}\;\,1 \leq j \leq n-1,\\
0&\text{for}\;\,j = n.
\end{cases}
\end{align}
This is true for $j = n$ because ${^\alpha}\gamma$ is block upper triangular. To deal with the case $1 \leq j \leq n-1$, we differentiate the relation
${F}_{\beta\alpha}({^\beta}\gamma \cdot {z}_{\beta})={^\alpha}\gamma \cdot {F}_{\beta\alpha}({z}_{\beta})$
with respect to $z_{\beta}^{n}$. This yields that
\begin{equation}\label{derivative}
\sum_{i=1}^{n}{^\beta}\gamma_{in}\frac{\partial F^{j}_{\beta\alpha}}{\partial z_{\beta}^{i}}\biggr|_{z_\beta(\gamma p)}
=\sum_{i=1}^{n}{^\alpha}\gamma_{ji}\frac{\partial F^{i}_{\beta\alpha}}{\partial z_{\beta}^{n}}\biggr|_{z_\beta(p)}\;\,\text{for all}\;\,1 \leq j \leq n.
\end{equation}
For $j = n$, again because ${^\alpha}\gamma$ is block upper triangular, we can deduce from this that
\begin{equation}\label{ableitung666}
\sum_{i=1}^{n}{^\beta}\gamma_{in}\frac{\partial F^{n}_{\beta\alpha}}{\partial z_{\beta}^{i}}\biggr|_{z_\beta(\gamma p)}
={^\alpha}\gamma_{nn}\frac{\partial F^{n}_{\beta\alpha}}{\partial z_{\beta}^{n}}\biggr|_{z_\beta(p)}.
\end{equation}
Fixing $1 \leq j \leq n-1$ and combining \eqref{derivative} with \eqref{ableitung666}, we can rewrite the left-hand side of \eqref{desire} for this particular value of $j$ as
$$\sum_{i = 1}^n {^\beta}\gamma_{in}\left[\frac{\partial F^j_{\beta\alpha}}{\partial z_\beta^i}\biggr|_{z_\beta(\gamma p)} - \frac{{^\alpha}\gamma_{jn}}{{^\alpha}\gamma_{nn}}\frac{\partial F^n_{\beta\alpha}}{\partial z_\beta^i}\biggr|_{z_\beta(\gamma p)}\right].$$
If both ${^\alpha}\gamma$ and ${^\beta}\gamma$ are block diagonal, then this is trivially equal to the right-hand side of \eqref{desire}, as desired, whereas otherwise there seems to be no reason for this to be true.

\bibliographystyle{amsplain}
\bibliography{ref2}

\end{document}